\pdfoutput=1
\RequirePackage{ifpdf}
\ifpdf % We~are running pdfTeX in pdf mode
\documentclass[pdftex]{sigma}
\else
\documentclass{sigma}
\fi

\hyphenation{col-li-ne-a-zio-ni}

\usepackage[all]{xy}

\let\ve\mathbf
\def\rmref#1{{\rm\ref{#1}}}
\def\I#1#2{\mathrm{I}_{#1#2}}
\def\II#1#2{\mathrm{II}_{#1#2}}
\def\barI#1#2{\bar{\mathrm{I}}_{#1#2}}
\def\barII#1#2{\bar{\mathrm{II}}_{#1#2}}
\def\tilI#1#2{\widetilde{\mathrm{I}}_{#1#2}}
\def\tilII#1#2{\widetilde{\mathrm{II}}_{#1#2}}
\def\h#1#2{h_{#1#2}}
\def\curv#1{{\mathop{\rm c}}_{#1}}
\def\nc#1{{\mathop{\rm nc}}_{#1}}
\def\gc#1{{\mathop{\rm gc}}_{#1}}
\def\gtor#1{{{\rm gt}}_{#1}}

\def\omegaIII{\omega_{\hbox{\sc iii}}}

\def\detI{{\rm det\,I}}
\def\detII{{\rm det\,II}}

\def\f{\omega}
\let\epsilon\varepsilon
\def\m{\mu}
\def\k{\kappa}
\def\l{\lambda}

\def\1{{-1}}

\def\omegaa{\varphi^+}
\def\omegab{\varphi^-}

\def\E{\mathrm{E}}
\def\F{\mathrm{F}}
\def\am{\mathop{\rm am}\nolimits}
\def\sn{\mathop{\rm sn}\nolimits}
\def\cn{\mathop{\rm cn}\nolimits}
\def\dn{\mathop{\rm dn}\nolimits}

\def\arccn{\mathop{\rm arccn}\nolimits}

\def\sign{\mathop{\rm sign}\nolimits}

\def\zb#1{\hbox to 0pt{\hss#1\hss}}
\def\hm{\hphantom{-}}
\def\eqref#1{{\rm(\ref{#1})}}

\newtheorem{Theorem}{Theorem}[section]
\newtheorem{Corollary}[Theorem]{Corollary}
\newtheorem{Proposition}[Theorem]{Proposition}
 { \theoremstyle{definition}
\newtheorem{construction}[Theorem]{Construction}
\newtheorem{Definition}[Theorem]{Definition}
\newtheorem{Example}[Theorem]{Example}
\newtheorem{Remark}[Theorem]{Remark} }

\numberwithin{equation}{section}

\def\sy#1{\expandafter\csname #1\endcsname}
\def\newsy#1{\expandafter\def\csname #1\endcsname}

\def\dif{\mathop{}\!\mathrm d}
\def\pd#1#2{\frac{\partial#1}{\partial#2}}

%\newsy{i}{`i}
\newsy{h11}{\h11}
\newsy{h12}{\h12}
\newsy{h22}{\h22}

\begin{document}
%\allowdisplaybreaks

\newcommand{\arXivNumber}{2403.12626}

\renewcommand{\PaperNumber}{029}

\FirstPageHeading

\ArticleName{On Integrable Nets in General\\ and Concordant Chebyshev Nets in Particular}
\ShortArticleName{On Integrable Nets in General and Concordant Chebyshev Nets in Particular}

\Author{Michal MARVAN}

\AuthorNameForHeading{M.~Marvan}

\Address{Mathematical Institute in Opava, Silesian University in Opava,\\ Na Rybn\'{\i}\v{c}ku 1, 746\,01~Opava, Czech Republic}
\Email{\href{mailto:michal.marvan@math.slu.cz}{michal.marvan@math.slu.cz}}

\ArticleDates{Received March 20, 2024, in final form March 31, 2025; Published online April 28, 2025}

\Abstract{We consider general integrable curve nets in Euclidean space as a particular integrable geometry invariant with respect to rigid motions and net-preserving reparameterisations. For the purpose of their description, we first give an overview of the most important second-order invariants and relations among them. As a particular integrable example, we reinterpret the result of I.S.~Krasil'shchik and M.~Marvan (see Section~2, Case 2 in [{\it Acta Appl. Math.}~\textbf{56} (1999), 217--230]) as a curve net satisfying an $\mathbb R$-linear relation between the Schief curvature of the net and the Gauss curvature of the supporting surface. In the special case when the curvatures are proportional (concordant nets), we find a correspondence to pairs of pseudospherical surfaces of equal negative constant Gaussian curvatures. Conversely, we~also show that two generic pseudospherical surfaces of equal negative constant Gaussian curvatures induce a concordant Chebyshev net. The construction generalises the well-known correspondence between pairs of curves and translation surfaces.}

\Keywords{integrable surface; integrable curve net; differential invariant; pseudospherical surface; Chebyshev net; concordant net}

\Classification{37K10; 53A05; 53A55; 53A60}

\vspace{-2mm}

\section{Introduction}

Classical integrable geometry includes integrable classes of surfaces in Euclidean
space as the most familiar instance~\cite{Bob,G-T,R-S,Sym}.
Integrability is mostly understood in the sense of soliton theory.
Numerous examples are known, often originating in the nineteenth century.
A~handful have been characterised in terms of differential invariants of surfaces.
In particular, Bianchi \mbox{\cite[Section~99]{LB-I}} characterised the isometry classes of surfaces
of revolution (which correspond one-to-one to planar curves).
Well-known are also surfaces satisfying
$\Delta(1/H) = 0$, Bianchi surfaces, and some others~\cite{Bob,G-T}.
A~number of known integrable geometries have been characterised in terms
of curve invariants. These include, for instance, the Hasimoto surfaces swept by
curves moving according to geometrically determined dynamics~\cite{HH-1972,R-S}
or the Razzaboni surfaces formed by nets of Bertrand curves~\cite{WKS-2003}.

However, quite rare have been successful classification attempts.
Those known to the author are limited to integrable Weingarten surfaces and their
evolutes, see~\cite{B-M-2010} and references therein, which revealed nothing
unrelated to nineteenth-century geometry.

%In this paper,
We consider integrable nets as integrable geometries characterisable in
terms of net invariants.
The paper has grown out of our earlier result~\cite[Section~2]{K-M} on  integrable
Gauss--Mainardi--Codazzi systems under Chebyshev parameterisation.
 Two main unsolved problems~were:\looseness=-1
\begin{enumerate}\itemsep=0pt
  \item[(A)] finding the geometric meaning of the result and
  \item[(B)] constructing explicit solutions.
\end{enumerate}
Sections~\ref{sect:nets} to~\ref{sect:int:Cheb} and Appendix~\ref{app:rel}
pertain to problem (A) and Sections~\ref{sect:int:Cheb} to~\ref{sect:ex} to problem~(B).

Section~\ref{sect:nets} briefly reviews nets,
emphasising their description as direction pairs.
Section~\ref{sect:inv} reviews second-order differential invariants,
including  the Schief curvature \cite[Section~3.1]{WKS-2007}.
Section~\ref{sect:Chebyshev} reviews general Chebyshev nets
and characterises them in terms of two scalar invariants.
Section~\ref{sect:integrability} introduces integrable classes of nets in analogy with
integrable classes of surfaces and explains their main differences.
Relations among invariants are relegated to Appendix~\ref{app:rel}.
In Section~\ref{sect:int:Cheb}, we turn to integrable Chebyshev
parameterisations found in~\cite[Section~2]{K-M} and
easily recognise them as classes of nets, which answers problem (A).

The first part may seem unnecessarily extensive,
compared to the simple answer it eventually gives to problem (A).
However, this part has also the concurrent goal of compensating for the lack of
suitable survey literature on nets and their invariants, opening the way to more
classification results related to integrable geometries, and possibly also to a new
interpretation of old results in planned follow-ups to this article.

As for problem (B), paper~\cite{K-M} only provided a zero-curvature representation
(ZCR), which is a standard starting point for obtaining exact solutions~\cite{BAD-1981,VBM-2008}.
However, we have not been able to turn the ZCR into solutions.

In this paper, we manage to solve problem (B) in the case of {\it concordant\/}
Chebyshev nets, characterised by the proportionality of the Gauss and Schief curvatures.
For this class, vector conservation laws are obtained in Section~\ref{sect:vcl}.
With their help, we establish a correspondence between concordant Chebyshev nets
and pairs of pseudospherical surfaces of equal curvatures,
providing a geometric solution to problem (B).
The passage from concordant nets to pairs of pseudospherical surfaces is covered in
Section~\ref{sect:ccnets->PS}, the opposite direction in Section~\ref{sect:PS->ccnets}.
The~construction generalises the well-known correspondence between
translation surfaces and pairs of
curves~\cite{LPE-1909,F-T-2007,K-T-2021,SL-1878}
and
provides a more or less straightforward way to obtain examples of exact concordant
Chebyshev nets, see Section~\ref{sect:ex}.

For simplicity, our exposition is local;
smoothness is assumed everywhere.

\section{Nets}
\label{sect:nets}

We consider nets  immersed in the Euclidean space
$\ve E^3$.
They are a classical object of interest
in differential geometry~\cite{LB-I,LB-II,GD-I,GD-II,GD-III} and
have numerous applications, especially in construction and
architecture~\cite{ENK,M-D-T-F-B,P-E-V-W,XT-2022}.
Examples include the
asymptotic, characteristic, Chebyshev, circular, cone-, conformal, conjugate,
equal path, equiareal, geodesic, Hasimoto, LGT, Liouville, orthogonal, principal,
Razzaboni, Voss--Guichard, wobbly nets,
and plenty of others (e.g.,~\cite{GD-I,YaSD,LPE-1904,BG,GMG,K-M-T,RK-1979,RK-1982,RL-RC,M-D-T-F-B,R-S,RS-1933,RS-1970,V-B-2010,W-P-2022}
and references therein).
Nets also appear as substructures of richer structures such as $n$-webs,
see~\cite{SIA-2021} and references therein.
Still other nets appear as smooth limits of discrete nets,
which are obligatory substructures of discrete surfaces~\cite{AB-UP,B-P-R,B-S}.

By a {\it local parameterisation} or simply a {\it parameterisation} of a surface
$S \subset \ve E^3$ we mean a~diffeomorphism $\ve r \colon  U \to V$,
where $U \subseteq \mathbb R^2$ is an open subset of the
{\it parameter space} $\mathbb R^2$ and
$\ve r U = V \subseteq S$
is an open subset of the surface $S$.
In this paper, $\ve r$ and $S$ are always related in this way.
\looseness-1

Viewed as maps $\ve r \colon  U \to \mathbb R^3$, parameterisations can be added and
multiplied by functions $U \to \mathbb R$.
Thus, parameterisations $\ve r \colon  U \to \mathbb R^3$ form a $C^\infty U$-module.

A net on a surface $S$ can be introduced in various equivalent ways,
in particular as a pair of transversal foliations of $S$ by curves or as a pair of
transversal direction fields on $S$.
Both exist in oriented and non-oriented versions.

\begin{Definition} \label{def:net}
A {\it foliation} of an open set $V \subseteq S$ is the partition of $V$ into the
level sets ${f = {\rm const}}$ of a function $f \colon  V \to \mathbb R$, $\dif f \ne 0$.
Foliations $f_1 = {\rm const}$ and $f_2 = {\rm const}$ are {\it transversal}
if $\dif f_1 \wedge \dif f_2 \ne 0$.
Locally, a {\it net\/} on a surface $S$ is a transversal pair of foliations.
If $\dif f_1 \wedge \dif f_2 = 0$ at isolated points or lines, these are referred to as
{\it singular}.
\end{Definition}

The surface $S$ is said to be {\it supported} by the net.

\begin{Definition} \label{def:net:isoparam}
In the notation of Definition~\ref{def:net},
let $\ve r$ be a parameterisation of $S$.
Then functions $x_1 = f_1 \circ\ve r$, $x_2 = f_2 \circ\ve r$  are called the
{\it family parameters}, with respect to which
the curves $f_i =$ const are the {\it isoparametric} curves.
The net is denoted by $\ve r(x_1,x_2)$ and said to be {\it isoparametric}.
\end{Definition}

Every net on a surface $S$ is locally isoparametric if we choose $x_1$, $x_2$ from
Definition~\ref{def:net:isoparam} as the local parameters.

Obviously, regular local reparameterisations
\begin{align}\label{eq:net:transf}
x_1' = x_1'(x_1), \qquad x_2' = x_2'(x_2)
\end{align}
preserve the curve families.
Locally, nets can be identified with the equivalence classes of
parameterisations modulo reparameterisations~\eqref{eq:net:transf}.%
\footnote{In the literature, transformations~\eqref{eq:net:transf} are sometimes
called {\it Sannian transformations}~\cite{YaSD, GS}.}

Differential invariants of curve nets can depend on the orientation.
{\it Oriented nets} can be introduced as the equivalence classes of
parameterisations modulo reparameterisations~\eqref{eq:net:transf} satisfying
${\rm d}x_i'/{\rm d}x_i > 0$.

Working with parameterisations is not entirely convenient when dealing with several
different nets on a surface simultaneously.
This can be remedied by employing direction pairs, oriented or non-oriented.
For counterparts used in computer graphics see~\cite[Section~2]{Vax-2016}.

\begin{Definition}%\label{def:dp}
A {\it direction field} $[X]$
represented by a nowhere vanishing vector field $X$ on an open set
$V \subseteq S$ is defined by
\[
[X]
 = \{ f X \mid f \in C^\infty S, \, f \lessgtr 0 \}.
\]
In the oriented version,
\[
[X]
 = \{ f X \mid f \in C^\infty S, \, f > 0 \}.
\]
A {\it direction pair}
is an ordered pair $([X_1], [X_2])$ of two distinct direction fields.
\end{Definition}

The fields can be specified in the parameter domain
$U \subseteq \mathbb R^2$ and mapped to $S$ by the tangent mapping
$\ve r_* \colon  TU \to TS$, which is tacitly understood in this paper.

Needless to say, nets and direction pairs mutually correspond.
In the non-oriented setting,
tangent vector fields to curves of a net represent a direction pair,
while the trajectories of the generating vector fields form a net.
Let us, however, remark that a direction pair can exist globally even if the
corresponding net of trajectories does not (recall the irrational flow on a~torus).\looseness=-1

Obviously, transformations
\begin{gather}
\label{eq:dp:transf}
X_i' = f_i X_i,
\end{gather}
where $f_i \in C^\infty S$, $f_i \lessgtr 0$, preserve non-oriented direction fields.
Oriented direction fields are preserved if functions $f_i$ are positive.

Transformations~\eqref{eq:net:transf}  and~\eqref{eq:dp:transf} mutually correspond.
In the non-oriented setting, a direction field $[X]$ in $\mathbb R^2$
corresponds to a vector field $X$ modulo the equivalence~$X \equiv f X$,
$f \lessgtr 0$,
which corresponds to a linear homogeneous first-order PDE,
which has a general solution of the form~$F(x)$, where $X x = 0$ and
$\dif F \ne 0$.
In the oriented setting, the gradients $\operatorname{grad} f_i$ are naturally oriented and have to
correspond to the orientations of $X_i$ and of the surface $S$, which must be orientable.

\begin{Remark}
Weise~\cite[Section~1]{KHW-1940} approached nets as isotropic directions
of a conformal class of Lorentzian metrics,
which became a common approach in the former Soviet literature~\cite{VFK-II,APN,VIS}.
This approach provides a connection to binary differential equations~\cite{B-T-1995},
but does not distinguish between pairs $([X_1], [X_2])$ and $([X_2], [X_1])$,
which prohibits asymmetrically defined nets.
\end{Remark}

\begin{Definition}
Vector fields $X_1$, $X_2$ are said to be the
{\it commuting representatives} of a direction pair $([X_1],[X_2])$
if they commute.
\end{Definition}

\begin{Proposition}
Every direction pair locally possesses commuting representatives.
\end{Proposition}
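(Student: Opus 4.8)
The plan is to construct local coordinates adapted to the pair and then to read off the commuting representatives as the two coordinate vector fields, using the dictionary between non-oriented direction fields and their first integrals recalled just above the statement. First I would invoke that dictionary to attach to each $[X_i]$ a function $x_i$ on a common neighbourhood satisfying $X_i x_i = 0$ and $\dif x_i \ne 0$; such an $x_i$ is precisely the general solution of the linear homogeneous first-order PDE associated with $[X_i]$, and its local existence for a nowhere vanishing field in the plane is guaranteed by the straightening (flow-box) theorem. Geometrically, $X_i$ is then tangent to the foliation $x_i = {\rm const}$.

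Next I would verify that $(x_1,x_2)$ is an admissible coordinate system, that is, $\dif x_1 \wedge \dif x_2 \ne 0$. Since $\dif x_1$ annihilates $X_1$ and $\dif x_2$ annihilates $X_2$, a coincidence $\dif x_1 \wedge \dif x_2 = 0$ at some point would make the two one-forms proportional there, so that a single nonzero form would annihilate both $X_1$ and $X_2$; but transversality makes $X_1,X_2$ span the tangent plane, forcing that form to vanish, a contradiction. Hence $(x_1,x_2)$ are admissible, indeed the family parameters of Definition~\ref{def:net:isoparam}.

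In these coordinates the leaves $x_1 = {\rm const}$ are the integral curves of $\partial_{x_2}$ and the leaves $x_2 = {\rm const}$ those of $\partial_{x_1}$; comparing with the tangency above, $\partial_{x_2}$ is a nowhere vanishing multiple of $X_1$ and $\partial_{x_1}$ a nowhere vanishing multiple of $X_2$, so that $\partial_{x_2} \in [X_1]$ and $\partial_{x_1} \in [X_2]$ (after fixing the orientation of each coordinate line in the oriented version). The representatives $\widetilde X_1 = \partial_{x_2}$ and $\widetilde X_2 = \partial_{x_1}$ then commute automatically, $[\partial_{x_2},\partial_{x_1}] = 0$, as any pair of coordinate fields does.

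I expect the only genuine obstacle to be the local existence of the first integrals $x_i$: this is exactly where smoothness and, above all, \emph{locality} enter, the global statement being false --- the irrational flow on the torus mentioned earlier carries a direction pair admitting no global first integral. The remaining steps, namely the transversality argument for admissibility and the vanishing of the commutator of coordinate fields, are then routine.
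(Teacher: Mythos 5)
Your proposal is correct and follows essentially the same route as the paper, which simply takes the coordinate fields $\partial/\partial x_1$, $\partial/\partial x_2$ of the family parameters as the commuting representatives; you merely supply the details the paper leaves implicit (local existence of first integrals via the flow-box theorem, admissibility of $(x_1,x_2)$ from transversality, and the automatic vanishing of the commutator of coordinate fields). The index swap $\partial_{x_2}\in[X_1]$, $\partial_{x_1}\in[X_2]$ is the correct bookkeeping and does not change the argument.
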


\begin{proof}
These can be obtained as the vector fields
$\partial/\partial x_1$ and $\partial/\partial x_2$ for
the family parameters~$x_1$,~$x_2$
(see Definition~\ref{def:net:isoparam})
of the corresponding net.
\end{proof}

\begin{Definition}
Denoting by ${\rm I}$ the metric of $S$, ${\rm I}(X,Y) = X \ve r \cdot Y \ve r$, the
{\it unit representative} $\widehat{X}$ of a direction field $[X]$ is defined by
\[
\widehat{X} = \frac{X}{\sqrt{{\rm I}(X, X)}},
\]
choosing the positive square root.
\end{Definition}

Obviously, ${\rm I}\bigl(\widehat{X}, \widehat{X}\bigr) = 1$, while the trajectories of $\widehat{X}$
are naturally parameterised by the arc length.

Thus, every net has commutative representatives and unit representatives,
which are normally different.
The coincidence of these representatives characterises Chebyshev nets, see
Proposition~\ref{prop:Chebyshev}\,(iii).

In what follows, we shall need some descriptors adopted from surface theory.
Firstly,
\[
\ve n
 = \frac{X_i \ve r \times X_j \ve r}{\sqrt{\|X_i \ve r \times X_j \ve r\|}}
\]
are the unit normal vector to the supported surface.
Secondly, the {\it fundamental coefficients} are defined by
\begin{gather}\label{eq:I-II}
\I ij = X_i \ve r \cdot X_j \ve r, \qquad
\II ij = X_j X_i \ve r \cdot \ve n.
\end{gather}
These are analogues of the coefficients of the fundamental forms
and coincide with them when $X_i = \partial/\partial x_i$ are the coordinate fields.

The expressions $\I ij$, $\II ij$ are symmetric in $i$, $j$ and invariant with
respect to rigid motions.
The symmetry of $\II ij$ is obvious from
$[X_i, X_j]\ve r \cdot \ve n = 0$.

\section{Invariants of nets}\label{sect:inv}

Invariants of nets have been pioneered by Aoust~\cite{AA} and Weise~\cite{KHW-1940}.
For an overview in various settings,
see~\cite{YaSD,APN,WS-I,WS-II,WS-III,VIS}.
For differential invariants in general, see~\cite{A-L-V}.
Here we recall useful first- and
second-order scalar differential invariants in terms of direction pairs.
In fact, only five of the invariants, namely $\omega$, $K$, $\sigma$, $\pi_1$, $\pi_2$,
will be  essential for the main result of the paper,
but for the sake of perspective we will review a larger set.
Invariants of nets include, in particular,
classical invariants of curves, surfaces, and curves on surfaces,
which can be found in any textbook on classical differential geometry,
in particular~\cite{Spi-III}.
Relations among various invariants and the description how invariants change
under five discrete symmetries can be found in  Appendix~\ref{app:rel}.

Given a direction pair $([X_1],[X_2])$, the scalar differential invariants of
order $\le r$ can be constructed from
the Euclidean space metric and the derivatives $X_{i_s} \ldots X_{i_1} \ve r$, $1 \le s \le r$, as expressions that are invariant with respect to rigid
motions and multiplications~\eqref{eq:dp:transf}.
Higher-order scalar differential invariants can be obtained from
lower-order ones by applying the invariant differentiations $\widehat{X_i}$
(differentiations with respect to the arc length).

Following Sannia~\cite{GS},
expressions $E$ satisfying
$E' = f_1^{a_1} f_2^{a_2} E$ are called $(a_1,a_2)$-semi\-in\-vari\-ants.
Needless to say, invariants are synonymous to $(0,0)$-semiinvariants.

We start with invariants expressible in terms of the
fundamental coefficients~\eqref{eq:I-II}.
Under $X_i' = f_i X_i$, the latter transform as
\[% \numbered \label{eq:net:I:II}
\I ij' = f_i f_j \I ij, \qquad
\II ij' = f_i f_j \II ij.
\]
Consequently, $\I ij$ and $\II ij$ are
$(\delta_{i1} + \delta_{j1}, \delta_{i2} + \delta_{j2})$-semiinvariants,
where $\delta_{ik}$ denotes the Kronecker symbol.

Observe that $\I ij$ are of order $1$, while $\II ij$ are of order $2$.
According to the appendix, Table~\ref{tab:inv}, there can be only one independent
first-order invariant,
for which we choose the non-oriented intersection angle $\omega$
determined by \looseness=-1
\[
\cos\omega = \frac{\I12}{\sqrt{\I11 \I22}},
\qquad
\sin\omega = \sqrt{\frac{\detI}{\I11 \I22}}
\]
between $0$ and $\pi$.
The oriented intersection angle between $0$ and $2 \pi$ can be defined analogously,
using $\sin\omega\,\ve n = \widehat{X_1} \ve r \times \widehat{X_2} \ve r$ to determine
$\sin\omega$.

Associated with the surface $S$ are two independent second-order invariants,
for which we choose the Gauss and the mean curvature
\[
K = \frac{\detII}{\detI},\qquad
H = \frac{\I 11\,\II 22 - 2 \I 12\,\II 12
 + \I 22\,\II 11}{\detI}.
\]
Associated with the curves of each family
are
the normal curvatures
\[
\nc{i} = \frac{\II ii}{\I ii},
\]
the geodesic curvatures
\[
\gc{i} = \frac{[X_i \ve r, X_i X_i \ve r, \ve n]}
  {\I ii^{3/2}}
\]
($[\ve u, \ve v, \ve w]$ denotes the triple product,
i.e., the oriented volume of the parallelepiped spanned by the vectors
$\ve u, \ve v, \ve w$),
the ordinary curvatures
\smash{$\curv{i} = \sqrt{\vphantom{|^2}\smash{\nc{i}^2 + \gc{i}^2}}$},
and the geodesic torsions~\cite{JKW} or~\cite[p.~165]{RvL}
\[
\gtor i = \frac{[X_i \ve r, \ve n, X_i \ve n]}{\I ii}
 = (-1)^i \frac{\I 12\,\II ii - \I ii\,\II 12}
     {\I ii \sqrt{\detI} }
\]
(ordinary torsions and normal torsions are of order 3).

Of utmost importance for us is the
{\it Schief curvature}
\begin{gather}\label{eq:sigma}
\sigma = \frac{X_2 X_1 \ve r \cdot \ve n}{\|X_1 \ve r \times X_2 \ve r\|}
 = \frac{[X_1 \ve r, X_2 \ve r, X_2 X_1 \ve r]}
  {[X_1 \ve r, X_2 \ve r, \ve n]^2} = \frac{\II 12}{\sqrt{\detI}},
\end{gather}
introduced by W.K. Schief~\cite[Section~3.1]{WKS-2007} as a continuous limit
of a curvature measure of discrete nets.
Considering an infinitesimal tetrahedron spanned by the net,
$\sigma$ turns out to be proportional
to the ratio of its volume
to the squared area
of its base, as well as to the ratio of its height to the area of
its base, see loc. cit. for the details.

\begin{Remark}\label{rem:sigma}
Obviously, conjugate nets are characterised by $\sigma = 0$, while
the wobbly (``wa\-ckelige'') nets of Sauer~\cite{RS-1933} are
characterised by admitting a $\sigma$-preserving isometry.
Schief \cite[Sec\-tion~2.2]{WKS-2007} related Chebyshev nets of constant $\sigma$
to the Pohlmeyer--Lund--Regge system.
\end{Remark}

Next we consider invariants expressible in terms of
$\I ij$ and $\widehat{X_i}$.
Firstly, for each $i = 1,2$, the derivative
\[
\omega_{,i} = \widehat{X_i} \omega
\]
of the
intersection angle with respect to the arc length is an invariant,
matching the description~of {\it courbure inclin\'ee}
by Aoust~\cite[I, Section~10]{AA}.

Secondly, the commutation relation
\begin{gather}\label{eq:iota}
[\widehat{X_i}, \widehat{X_j}] = \iota_j \widehat{X_i} + \iota_i \widehat{X_j}
\end{gather}
can be taken for the definition of second-order invariants $\iota_i$.
One easily checks that
\[
\iota_1 = \frac{\widehat{X_2} \I11}{2\,\I11}, \qquad
\iota_2 = -\frac{\widehat{X_1} \I22}{2\,\I22}.
\]

More classical are Bortolotti curvatures~\cite[equations~(1) and~(2)]{EB-1925},
which can be introduced in the following way.
Consider the covariant derivative $\nabla_{X_1} X_2$,
defined by the property that
$(\nabla_{X_1} X_2) \ve r$ is the projection
of the vector $X_1 X_2\,\ve r$ to the tangent space to $S$, at every
point.
Being tangent to the surface, $\nabla_{X_i} X_j$ can be expressed as a linear
combination \smash{$\Gamma^1_{ij} X_1 + \Gamma^2_{ij} X_2$} of~$X_1$,~$X_2$.%
\footnote{If $X_i = \partial/\partial x_i$, then $\Gamma^k_{ij}$
become the usual Christoffel symbols.}\,\footnote{Contrary to Christoffel symbols, $\Gamma^k_{ij} \ne \Gamma^k_{ji}$ in general.}\,\footnote{By the way, $\Gamma^1_{21}$, $\Gamma^2_{12}$ are not semiinvariants,
while
$\Gamma^{2}_{11}$, $\Gamma^{1}_{22}$ are related to the geodesic curvatures,
see~\cite{EB-1925}.}
By Cramer's rule, explicit expressions for \smash{$\Gamma^1_{12}$}, \smash{$\Gamma^2_{21}$}
are
\begin{gather}
\Gamma^1_{12} = \frac1{\detI}
 \left|\begin{matrix}
  X_1 \ve r \cdot X_1 X_2 \ve r & X_1 \ve r \cdot X_2 \ve r \\
  X_2 \ve r \cdot X_1 X_2 \ve r & X_2 \ve r \cdot X_2 \ve r
 \end{matrix}\right|,
\qquad
\Gamma^2_{21} = \frac1{\detI}
 \left|\begin{matrix}
  X_1 \ve r \cdot X_1 \ve r & X_1 \ve r \cdot X_2 X_1 \ve r \\
  X_2 \ve r \cdot X_1 \ve r & X_2 \ve r \cdot X_2 X_1 \ve r
 \end{matrix}\right|.\label{eq:Gamma}
\end{gather}
It is easily seen that $\Gamma^1_{12}$ is a $(0,1)$-semiinvariant, while
$\Gamma^2_{21}$ is a $(1,0)$-semiinvariant.
Hence,
\begin{gather}\label{eq:pi}
\pi_1 = \frac{\Gamma^1_{12}}{\sqrt{\I 22}}, \qquad
\pi_2 = \frac{\Gamma^2_{21}}{\sqrt{\I 11}}
\end{gather}
are invariants.
Up to signs,
$
\pi_1 \sin\omega$,
$\pi_2 \sin\omega$
coincide with the aforementioned
Bortolotti curvatures~\cite[equations~(1) and~(2)]{EB-1925}.
Related to them are also the Chebyshev curvature and the Chebyshev vector,
see~\cite{KHW-1940} and~\cite[Section~23]{VIS}, which we omit.

\section{Chebyshev nets}\label{sect:Chebyshev}

Originally introduced to model woven fabrics conforming to a
body~\cite{PLC,Ghy},
Chebyshev nets have important applications and are subject to
active research till today~\cite{AMD,H-O-O,M-M-2017,SF-C-BC-V}.
As the most exciting architectural application,
Chebyshev nets model elastic timber structures (gridshells) obtained by
buckling a flat straight rectangular grid connected by
joints~\cite{IL-2015}.
A manifestly invariant characterisation of Chebyshev nets is the
{\it curvilinear parallelogram condition} (opposite
sides of curvilinear quadrilaterals formed by pairs of curves of each family
have the same length),
see Bianchi~\cite[Section~379]{LB-II} or Darboux~\cite[Section~642]{GD-III}.

\begin{Proposition}\label{prop:Chebyshev}
The following statements about a net and the corresponding direction pair
$([X_1], [X_2])$ are equivalent\/{\rm:}
\begin{enumerate}\itemsep=0pt
\item[\rm(i)]
the family parameters $x$, $y$ can be chosen in such a way that the first
fundamental form is
\begin{gather}\label{eq:Cheb:Iff}
{\rm d} x^2 + 2 \cos\f\,{\rm d} x\,{\rm d} y + {\rm d} y^2
\end{gather}
{\rm(}the Chebyshev parameterisation\/{\rm;}
$\omega$ coincides with the intersection angle invariant\/{\rm);}
\item [\rm(ii)]
all unit vectors in one direction of the net are parallel along all curves
in the other direction, i.e.,
\[
\nabla_{X_1} \widehat{X_2} = 0, \qquad \nabla_{X_2} \widehat{X_1} = 0
\]
{\rm (}see Bianchi~{\rm\cite{LB-1922});}
\item[\rm(iii)]
the unit representatives commute, i.e.,
\[
\bigl[\widehat{X_1}, \widehat{X_2}\bigr] = 0;
\]
\item[\rm(iv)] the invariants $\iota_i$ vanish, that is,
\[
\iota_1 = 0 = \iota_2, \qquad \text{i.e.,} \qquad X_2 \I11 = 0 = X_1 \I22;
\]
\item[\rm(v)] the Bortolotti curvatures~\eqref{eq:pi} vanish, that is,
\[
\pi_1 = 0 = \pi_2;
\]
\item[\rm(vi)] the geodesic curvatures satisfy
\[
\gc{1} = -\widehat{X_1} \omega, \qquad \gc{2} = \widehat{X_2} \omega
\]
{\rm(}see {\rm\cite[equation~(4.7)]{YM-2017})}.
\end{enumerate}
\end{Proposition}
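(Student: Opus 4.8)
The plan is to fix the commuting representatives $X_1 = \partial/\partial x_1$, $X_2 = \partial/\partial x_2$ whose existence was just established, and to route all six conditions through~(iv), treated as the computational \emph{hub}. The equivalence (iii)$\Leftrightarrow$(iv) is essentially free: since the directions are transversal, $\widehat{X_1}$ and $\widehat{X_2}$ are linearly independent, so the commutation relation~\eqref{eq:iota} shows $\bigl[\widehat{X_1},\widehat{X_2}\bigr]=0$ exactly when $\iota_1=\iota_2=0$, and the displayed formulas for $\iota_1,\iota_2$ turn this into $X_2\I11=0=X_1\I22$ because each $\widehat{X_i}$ is a nonvanishing multiple of $X_i$.

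For (iv)$\Leftrightarrow$(v) I would first record, for commuting representatives, the identities $X_1\ve r\cdot X_1X_2\ve r=\tfrac12 X_2\I11$ and $X_2\ve r\cdot X_1X_2\ve r=\tfrac12 X_1\I22$ (and their mirror images for $X_2X_1\ve r$, where $X_1X_2\ve r=X_2X_1\ve r$ is used). Substituting these into the Cramer expressions~\eqref{eq:Gamma} yields
\[
\Ch112=\frac{\I22\,X_2\I11-\I12\,X_1\I22}{2\,\detI},\qquad
\Ch221=\frac{\I11\,X_1\I22-\I12\,X_2\I11}{2\,\detI}.
\]
Now (iv) plainly kills both numerators, giving (v) through~\eqref{eq:pi}; conversely, reading the vanishing numerators as a homogeneous linear system in $(X_2\I11,X_1\I22)$ with coefficient determinant $\I11\I22-\I12^2=\detI\neq0$ forces the trivial solution, i.e.~(iv). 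The bridge (ii)$\Leftrightarrow$(v) then follows from the observation that $\widehat{X_2}$ has unit length, so $\nabla_{X_1}\widehat{X_2}$ is $\mathrm I$-orthogonal to $\widehat{X_2}$; being also tangent to $S$, it vanishes as soon as its $X_1$-component does, and that component is precisely $\Ch112/\sqrt{\I22}=\pi_1$. Hence $\nabla_{X_1}\widehat{X_2}=0\Leftrightarrow\pi_1=0$, and symmetrically $\nabla_{X_2}\widehat{X_1}=0\Leftrightarrow\pi_2=0$.

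It remains to attach (vi) and (i). For (ii)$\Leftrightarrow$(vi) I would use the adapted orthonormal frame $e_1=\widehat{X_1}$, $e_2=\ve n\times\widehat{X_1}$, in which $\widehat{X_2}=\cos\omega\,e_1+\sin\omega\,e_2$ agrees with the oriented convention $\sin\omega\,\ve n=\widehat{X_1}\ve r\times\widehat{X_2}\ve r$, while $\nabla_{\widehat{X_1}}e_1=\gc1\,e_2$ and $\nabla_{\widehat{X_1}}e_2=-\gc1\,e_1$. Expanding,
\[
\nabla_{\widehat{X_1}}\widehat{X_2}=\bigl(\gc1+\widehat{X_1}\omega\bigr)\bigl(-\sin\omega\,e_1+\cos\omega\,e_2\bigr),
\]
whose second factor is a nonzero tangent vector; since $X_1=\sqrt{\I11}\,\widehat{X_1}$, this gives $\nabla_{X_1}\widehat{X_2}=0\Leftrightarrow\gc1=-\widehat{X_1}\omega$, and the symmetric computation along $\widehat{X_2}$ produces $\gc2=\widehat{X_2}\omega$. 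Finally, (i)$\Leftrightarrow$(iv): condition~(iv) says $\I11$ depends on $x_1$ alone and $\I22$ on $x_2$ alone, so the net-preserving reparameterisation $x=\int\sqrt{\I11}\,\dif x_1$, $y=\int\sqrt{\I22}\,\dif x_2$ normalises $\I11=\I22=1$ and brings the first fundamental form to~\eqref{eq:Cheb:Iff}; the converse is immediate, as that form has $\I11=\I22=1$.

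The step I expect to fight with is (ii)$\Leftrightarrow$(vi): it requires reconciling three independent sign conventions --- the oriented angle $\omega$, the orientation of $e_2=\ve n\times\widehat{X_1}$, and the triple-product definition of $\gc i$ --- so that the expansion reproduces exactly the signs $-\widehat{X_1}\omega$ and $+\widehat{X_2}\omega$ of~(vi) rather than their negatives. A subsidiary point to keep visible is that every identity for commuting representatives rests on $X_1X_2\ve r=X_2X_1\ve r$, which must be invoked explicitly when simplifying the inner products in~\eqref{eq:Gamma}; the rest is routine once transversality ($\detI\neq0$) and the nonvanishing of the $\widehat{X_i}$ are used to discard harmless positive factors.
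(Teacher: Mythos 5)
Your proof is correct, but it is organised differently from the paper's and is self-contained where the paper delegates to its appendix. The paper proves the cycle (i)$\Rightarrow$(ii)$\Rightarrow$(iii)$\Rightarrow$(i) --- torsion-freeness of the Levi-Civita connection for (ii)$\Rightarrow$(iii), straightening of commuting unit fields for (iii)$\Rightarrow$(i) --- and then attaches (iii)$\Leftrightarrow$(iv)$\Leftrightarrow$(v)$\Leftrightarrow$(vi) by citing the defining relation~\eqref{eq:iota} and the ready-made identities~\eqref{eq:iotapi} and~\eqref{eq:pigc}. You instead make (iv)/(v) the hub: your Cramer computation of $\Ch112$, $\Ch221$ in terms of $X_2\I11$, $X_1\I22$ (with the nondegeneracy of $\detI$ for the converse) replaces the appeal to~\eqref{eq:iotapi}; your orthogonality argument identifying the $X_1$-component of $\nabla_{X_1}\widehat{X_2}$ with $\pi_1$ gives (ii)$\Leftrightarrow$(v) directly; your frame computation for (ii)$\Leftrightarrow$(vi) is in effect a proof of~\eqref{eq:pigc}, and the signs you worry about do come out right under the conventions $e_2=\ve n\times\widehat{X_1}\ve r$ and $\sin\omega\,\ve n=\widehat{X_1}\ve r\times\widehat{X_2}\ve r$, because the oriented angle from $\widehat{X_2}$ to $\widehat{X_1}$ is $-\omega$, which produces $\nabla_{\widehat{X_2}}\widehat{X_1}=(\gc2-\widehat{X_2}\omega)\bigl(\sin\omega\,f_1+\cos\omega\,f_2\bigr)$ and hence the asymmetric pair $\gc1=-\widehat{X_1}\omega$, $\gc2=\widehat{X_2}\omega$. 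Your explicit reparameterisation $x=\int\sqrt{\I11}\,\dif x_1$, $y=\int\sqrt{\I22}\,\dif x_2$ for (i)$\Leftrightarrow$(iv) is more constructive than the paper's straightening step. What your route buys is independence from Appendix~\ref{app:rel} and explicit intermediate formulas; what the paper's buys is brevity and reuse of identities needed elsewhere. The one point worth stating explicitly --- which you handle correctly by fixing $X_i=\partial/\partial x_i$ at the outset --- is that both the formulae $\iota_1=\widehat{X_2}\I11/(2\I11)$, $\iota_2=-\widehat{X_1}\I22/(2\I22)$ and the condition $X_2\I11=0=X_1\I22$ in (iv) are meaningful only for commuting representatives, since $X_2\I11$ is not a semiinvariant under general rescalings~\eqref{eq:dp:transf}.
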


\begin{proof}
(i) $\Rightarrow$ (ii).
If (i) holds, then both $\partial/\partial x$ and $\partial/\partial y$ are unit
vectors.
The Bianchi condition~(ii) can be verified by the straightforward calculation
of the covariant derivatives.

(ii) $\Rightarrow$ (iii).
If (ii) holds, then $[\widehat{X_1}, \widehat{X_2}]
 = \nabla_{\widehat{X_1}} \widehat{X_2} - \nabla_{\widehat{X_2}} \widehat{X_1} = 0$.

(iii) $\Rightarrow$ (i).
If (iii) holds, then one can choose coordinates $x$, $y$ in such a way that
$\partial/\partial x = \widehat{X_1}$ and $\partial/\partial y = \widehat{X_2}$.
Being equal to the squared lengths of the vectors $\widehat{X_i} \ve r$,
the metric coefficients at ${\rm d} x^2$ and ${\rm d} y^2$ are equal to $1$.

(iii) $\Leftrightarrow$ (iv) is obvious by formula~\eqref{eq:iota},
which defines $\iota_i$.

(iv) $\Leftrightarrow$ (v) is obvious from identities~\eqref{eq:iotapi} in
Appendix~\ref{app:rel}.

(v) $\Leftrightarrow$ (vi) is obvious from identities~\eqref{eq:pigc} in
Appendix~\ref{app:rel}.
\end{proof}

\begin{Remark}Another criterion is the vanishing of the Chebyshev vector~\cite[Section~67]{APN}
or~\cite[Section~55]{VIS}.
Yet different criteria can be found in~\cite{WCG-1932,SF-C-BC-V,WKS-2007,CEW}.
\end{Remark}

\begin{Remark}\label{rem:bisect}
Associated with every Chebyshev parameterisation~\eqref{eq:Cheb:Iff} is the
{\it isodiagonal} parameterisation
(\cite[Section~1]{AV-1869} or \hbox{\cite[Section~678]{GD-III}}) by
$u = x + y$, $v = x - y$.
In terms of $u$, $v$, the metric is
\[%\numbered\label{eq:concirc}
{\rm I} = \cos^2 \tfrac12 \omega \dif u^2 + \sin^2 \tfrac12 \omega \dif v^2.
\]
\end{Remark}

\section{Integrable nets}\label{sect:integrability}

The literature on soliton geometries is very extensive,
but authors (except~\cite{Sym}) seem reluctant to define them in any other way
than by means of examples.
In this section we attempt to give a definition, which covers both surfaces and nets
(Definition~\ref{def:integ}).

Integrability is understood in the conventional sense of
soliton theory~\cite{Bob,G-T,R-S,Sym}.
The integrability criterion is the existence
of a zero-curvature representation (ZCR)~\cite{Z-S-1974}
\[
D_y A - D_x B + [A, B] = 0,
\]
where, firstly, $A$, $B$ are elements of a finite-dimensional and
non-solvable matrix Lie algebra that cannot be reduced to a solvable
one by gauge transformations and,
secondly, $A$, $B$ depend on a (spectral) parameter that is not removable
by gauge transformation.
A gauge transformation by means of a gauge matrix $H$ is the
correspondence
\begin{gather*}
A' = D_x H \cdot H^{-1} + H \cdot A \cdot H^{-1}, \qquad
B' = D_y H \cdot H^{-1} + H \cdot B \cdot H^{-1}.
\end{gather*}
For simple criteria of reducibility and removability, see~\cite{MM-rzcr,MM-spp}.

For both surfaces and nets, we require integrability of the
Gauss--Mainardi--Codazzi system.
The system is, in compact form~\cite{Spi-III,Sym},
\begin{gather}\label{eq:GMC}
R_{ijkl} = \II jk \II il - \II ik \II jl, \qquad
\mathrm{II}_{ij;k} = \mathrm{II}_{ik;j}
\end{gather}
($R_{ijkl}$ is the Riemann tensor and the semicolon denotes the covariant derivatives).
We also recall that the Gauss--Mainardi--Codazzi equations are the
compatibility conditions of the Gauss--Weingarten system
\begin{gather}\label{eq:GW}
\ve r_{,ij} = \Gamma^k_{ij} \ve r_{,k} + \II ij \ve n,
\qquad
\ve n_{,i} = {\rm II}^k_i \ve r_{,k},
\end{gather}
which describes the immersed surfaces and their normals
($\Gamma^k_{ij}$ are the Christoffel symbols and~the index
$k$ in ${\rm II}^k_i$ is raised by the metric $\I ij$).
In expanded form, the Gauss--Mainardi--Codazzi system consists
of three partial differential equations on six unknowns
$\I ij$, $\II ij$, and can be found in all standard textbooks on surface
geometry.

Besides integrability, another key point is the geometric characterisability
of the class.
The three partial differential equations on six unknowns can be supplemented
with as much as three other conditions
(or more if auxiliary functions are introduced).
Normally, two conditions (usually algebraic) are spent on specifying a particular
parameterisation,
leaving room for one condition to characterise the class.

To characterise a geometric class of surfaces (nets) in Euclidean space,
the condition must be invariant with respect to Euclidean motions and arbitrary
reparameterisations of surfaces (nets).
In other words, there must exist a formulation of the condition in terms
of differential invariants of surfaces (nets), at least in principle.
Therefore, it seems natural to define integrable classes in the following way,
suitable for specifying classification problems.

\begin{Definition}
\label{def:integ}
A class of surfaces (nets) is called {\it integrable} if it can be determined by a~condition written in terms of differential invariants of surfaces (nets) and the
Gauss--Mainardi--Codazzi system augmented with this condition
is integrable in an appropriate parameterisation.
\end{Definition}

\begin{Proposition}%\label{prop:integ:n->s}
If a class of nets is integrable,
then so is the class of supported surfaces.
\end{Proposition}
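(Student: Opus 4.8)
The plan is to exploit the coincidence, in a net-adapted parameterisation, of the Gauss--Mainardi--Codazzi system of the net with that of the supporting surface, so that one and the same integrable system governs both. First I would, for each net of the given class, pass to the family parameters $x_1$, $x_2$ of Definition~\ref{def:net:isoparam}; then $X_i = \partial/\partial x_i$ represent the direction pair, and, as recorded after~\eqref{eq:I-II}, the net fundamental coefficients $\I ij$, $\II ij$ reduce to the ordinary coefficients of the first and second fundamental forms of $S$. Consequently the net system~\eqref{eq:GMC} becomes the classical Gauss--Mainardi--Codazzi system of $S$, while the Gauss--Weingarten system~\eqref{eq:GW} recovers $S$ from the very same data.

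Next I would characterise the class $\mathcal S$ of supported surfaces in terms of surface invariants. By hypothesis the net class is cut out by a condition $C$ written in net invariants, and a surface $S$ lies in $\mathcal S$ precisely when it carries a direction pair satisfying $C$. Since $C$ is invariant under rigid motions and under the reparameterisations~\eqref{eq:net:transf}, the existence of such a pair depends only on the congruence class of $S$, which by the fundamental theorem of surface theory is encoded in the surface invariants. Hence the predicate ``$S$ supports a net of the class'' is itself expressible through surface invariants, at least in principle, and furnishes the characterising condition $C'$ required by Definition~\ref{def:integ}.

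It then remains to transfer integrability, and here the first step does the work. Given $S \in \mathcal S$, parameterise it by a compatible net together with the two normalising conditions that single out the integrable net parameterisation. In these coordinates the fundamental forms $\I ij$, $\II ij$ are exactly the unknowns of the net system, and they satisfy $C$ by construction; so the surfaces of $\mathcal S$, thus parameterised, are described by precisely the integrable system ``\eqref{eq:GMC} augmented with $C$ and the two parameterisation conditions''. This is the concrete realisation, in an appropriate parameterisation, of the Gauss--Mainardi--Codazzi system of $S$ augmented with $C'$. In particular the very same zero-curvature representation $D_y A - D_x B + [A,B] = 0$ serves it, and since $A$, $B$ are untouched by forgetting the direction pair, neither the spectral parameter nor the non-solvability of the underlying Lie algebra is affected. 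Hence $\mathcal S$ meets both requirements of Definition~\ref{def:integ} and is integrable.

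The main obstacle is the characterisation step rather than the transfer of the zero-curvature representation: one must ensure that ``supporting a net of the class'' is a property of the surface alone and not of additional, net-specific data. This is exactly what the appeal to rigid-motion and reparameterisation invariance, combined with the fundamental theorem of surface theory, secures, placing $C'$ among the surface invariants in the ``in principle'' sense that Definition~\ref{def:integ} already permits.
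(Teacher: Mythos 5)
Your argument is correct and is essentially the paper's own proof, which consists of the single line ``Obvious from the definition'': the supported surfaces are cut out by the (in-principle invariant) condition of carrying a net of the class, and the augmented Gauss--Mainardi--Codazzi system, hence its zero-curvature representation, is literally the same in the net-adapted parameterisation. You have merely unpacked the details the paper leaves implicit, including the ``at least in principle'' caveat on expressing the existential condition through surface invariants, which matches the paper's own level of rigour.
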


\begin{proof}Obvious from the definition.
\end{proof}

The appropriate parameterisation the definition refers to
should exist for every member of the class.
Its purpose is to make the whole system determined.
For instance, the parameterisation may be principal for generic surfaces,
asymptotic for hyperbolic surfaces, Chebyshev for Chebyshev nets, etc.
However, experience shows that if a system is integrable in one parameterisation,
then it is integrable in any other, even in a general one
(in which case the whole system is underdetermined).
This may be related to the fact that the zero curvature representation is also a geometric
notion, which can be understood as a matrix-Lie-algebra-valued 1-form
$\alpha = A \dif x + B \dif y$
satisfying $\dif \alpha = \frac12 [\alpha,\alpha]$,
and the gauge transformation as
$\alpha' = \dif H \cdot H^{-1} + H \cdot \alpha \cdot H^{-1}$.

Integrable classes of nets have been with us since the dawn of differential geometry
of surfaces.
For principal conformal nets, see Remark~\ref{rem:isotherm} below.
To name others, conjugate nets are connected with the
Laplace--Darboux integrability \cite{GD-I,BGK-1993}.
Moreover, classical integrable geometries include
integrable curve evolutions \cite{HH-1972,GLL-1977,M-B-2008,R-S,S-R-1999},
which form integrable nets if completed with the evolution trajectories.
Furthermore, integrable foliations of surfaces by curves
\cite{C-T-1981,WKS-2003,CT-1988}
can be completed to integrable nets by the orthogonal curves.
Apparently, already a review of the known cases would be a formidable task, not
speaking about their invariant characterisations.

A systematic search for integrable classes of nets can be performed in the same
manner as the search for integrable classes of surfaces.
A natural way is to incorporate a non-removable spectral parameter into
the $\mathfrak{so}(3)$-valued zero-curvature representation induced by the
Gauss--Weingarten system~\cite{Sym},
either by the symmetry method \cite{JC-1993,L-S-T,DCF-LAOS}
or by the more powerful cohomological method~\cite{B-M-2009}.
\looseness-1

It is worth mentioning that classification results for integrable nets may also include
integrable surfaces equipped with the nets in question.
For example, linear Weingarten surfaces appeared in the classification
of integrable classes of Chebyshev parameterisations\footnote{Integrable classes
of parameterisations can be introduced by Definition~\ref{def:integ} stripped of the
invariance requirement.} in~\cite[Section~2]{K-M}.

\begin{Remark}\label{rem:isotherm}
Integrable classes of nets and integrable classes of surfaces mutually correspond
(think of the class of all surfaces capable of carrying the nets).
Therefore, classification of integrable surfaces and
classification of integrable nets are interrelated, but in a complicated way.\looseness=-1

For instance, isothermic surfaces and principal conformal nets
(meaning nets generated by principal conformal parameterisations)
\cite{C-G-S,MT-2017}
determine each other uniquely and
the study of isothermic surfaces is the same thing as the
study of principal conformal nets.
It can be easily seen that principal nets are characterised by the vanishing of
$\cos\omega$ and either of $\sigma$, $\gtor1$,  $\gtor2$, which are of order 1 and 2,
respectively,
whereas conformal nets are characterised by the vanishing of $\cos\omega$
and \smash{$\widehat{X_1}\,\gc1 + \widehat{X_2}\,\gc2$}, which are of order 1 and 3, respectively.
On the other hand, the lowest-order nontrivial surface invariant
vanishing for all isothermic surfaces is
$(k_1  - 2 k_2)_{,12} + (k_2 - 2 k_1)_{,21}$,
which is of order 4
($k_i$~are the principal curvatures and comma denotes
differentiation with respect to the arc length in principal directions).
Therefore, principal conformal nets appear earlier (at lower order)
in the classification of nets than isothermic surfaces in the classification of surfaces.
\end{Remark}

As a rule, if a net is integrable, then so are the various {\it derived} nets
(on the same or another surface) obtained by geometric constructions.
Thus, a complete classification of integrable classes (to a certain order of
invariants), if such a goal were achievable, would consist of a rather complex
interconnected (and infinite) network.
However, invariant description of many derived nets will be of higher order
than that of the net they are derived from,
often far out of reach of presently available classification methods.
Classification efforts will most likely spot only the
integrable classes on the ``border'', while the derived nets will allow to penetrate
deeper into the ``integrable~region''.

Let us, finally, remark that one may also look for integrable parameterisations
of a given surface, requiring the integrability of the system
to obtain such a parametrisation
(for instance, the Servant equations,
see in the beginning of the next section).
This is, however, a different problem.

\section{Integrable Chebyshev nets}\label{sect:int:Cheb}

Voss~\cite{AV-1869} obtained large classes of explicit Chebyshev nets,
among others on surfaces of revolution;
he also proved that Chebyshev nets on the sphere correspond to solutions
of the sine-Gordon equation~\cite[Section~3]{AV-1869}.
For pictures, see~\cite{H-O-O,M-T-1956};
the work~\cite{H-O-O} also addresses Chebyshev nets of class~$C^1$.
Given a surface metric, obtaining general Chebyshev nets is possible by
solving the Servant equations \cite[equation~(3)]{MS-1904}, which are, however,
not always integrable.
Integrable are also special Chebyshev nets that can be found
according to \cite[Section~2.2]{WKS-2007},
cf.\ Remark~\ref{rem:sigma}.

In the earlier paper~\cite[Section~2]{K-M}, we looked for integrable
Gauss--Mainardi--Codazzi systems in Chebyshev parameterisation.
Our result consisted of five classes,\footnote{Chebyshev nets on linear Weingarten surfaces have been studied in~\cite{DL-2023}.}
including Case~2 specified by the linear relation
\begin{gather}\label{eq:CondII}
\mu K + \kappa\frac{\II12}{\sin\omega} + \lambda = 0,
\end{gather}
where $\mu$, $\kappa$, $\lambda$ are real constants, $K$ is the Gauss curvature,
$\II12$ is the coefficient of the second fundamental form with respect to the
Chebyshev parameterisation, and $\omega$ is the intersection angle.
As the parameterisation-dependent term $\II12/{\sin\omega}$ in
formula~\eqref{eq:CondII} coincides with the
Schief curvature~\eqref{eq:sigma}
(since Chebyshev parameterisations satisfy $\I11 = 1 = \I22$),
we see that condition~\eqref{eq:CondII} can be rewritten as
\begin{gather}\label{CondII}
\m K + \k \sigma + \l = 0,
\end{gather}
where $\m$, $\k$, $\l$ are arbitrary constants.
Manifestly, condition~\eqref{CondII} specifies a geometric
class of nets.
We already know from~\cite{K-M} that
the corresponding Gauss--Mainardi--Codazzi system is integrable
(has a ZCR).
Hence, condition~\eqref{CondII}
determines an integrable class of nets according to Definition~\ref{def:integ}.

Topologically, the ``space'' of conditions $\m K + \k \sigma + \l = 0$
is the projective space $\mathbb RP^2$ (a~sphere with identified antipodal points),
see Figure~\ref{fig:world}.
The discrete symmetries $T_{-1}, \dots, T_2$ (see Table~\ref{tab:Ts:inv} in the appendix)
change the sign of $\sigma$, that is, the sign of $\k$,
identifying $\m K + \k \sigma + \l = 0$ with $\m K - \k \sigma + \l = 0$.
\begin{figure}[th]\centering
\unitlength = 1cm
\begin{picture}(5,5.5)
\put(0.5,0){\includegraphics[scale = 0.32]{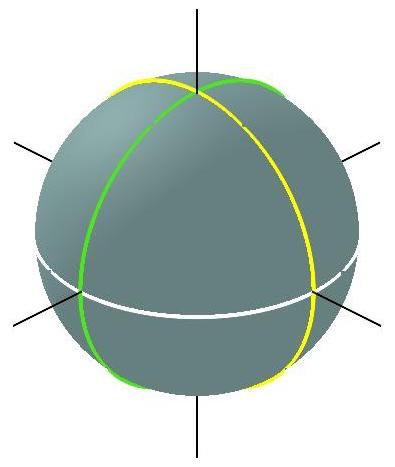}}
\put(2.3,5){$\l$}
\put(0.3,1.8){$\m$}
\put(4.8,1.8){$\k$}
\end{picture}
\caption{The space of conditions \protect$\m K + \k \sigma + \l = 0\protect$. Antipodal points coincide.}
\label{fig:world}
\end{figure}

\begin{Remark}
When at least one of $\m$, $\k$ is zero,
the Chebyshev nets satisfying condition~\eqref{CondII}
fall into one of the following classes:
\begin{enumerate}\itemsep=0pt
\item
If $\k = 0$, $\m \ne 0$ (the green circle in Figure~\ref{fig:world}),
then condition~\eqref{CondII} implies the constancy of~$K$.
Thus, we arrive at surfaces of constant Gaussian curvature equipped
with an arbitrary Chebyshev net, including developable surfaces
(the intersection of green and white circle).

\item  If $\mu = 0$ and $\k \ne 0$ (the yellow circle in Figure~\ref{fig:world}),
then condition~\eqref{CondII} implies
the constancy of the Schief curvature $\sigma$, which is the situation
explored in Schief~\cite[Section~2.2]{WKS-2007}.
One obtains the equation $\ve r_{xy} = \sigma \ve r_x \cdot \ve r_y$,
identifiable with the integrable Lund--Regge system.
For finite-gap solutions, see Shin~\cite{HJS}.
If, moreover, $\l = 0$ (the intersection of yellow and white circle), then $\sigma = 0$.
This yields the well-understood class of translation surfaces~\mbox{\cite[Sections~81 and~82]{GD-I}},
i.e., solutions of the equation $\ve r_{xy} = 0$.
\end{enumerate}
\end{Remark}

We see that the cases of $\m = 0$ or $\k = 0$ (the green and the yellow circle)
have already been sufficiently understood.
Therefore, we may assume that $\m \ne 0 \ne \k$ in what follows.
Dividing condition~\eqref{CondII} by $\mu \ne 0$ is equivalent to setting
$\m = 1$,
which we assume henceforth.

\begin{Remark}\label{rem:CondII:ways}
Using identities listed in Appendix~\ref{app:rel},
condition~\eqref{CondII} can be rewritten in different ways, for instance
\[
\frac{\nc1^2 + \gtor1^2 - \kappa\,\gtor1 - \lambda}{\nc1}
 = \frac{\nc2^2 + \gtor2^2 + \kappa\,\gtor2 - \lambda}{\nc2}
\]
(a relation between two curve invariants on the surface) or
\[
\cot\omega + \cot\omegaIII
% = - H\frac{2\sigma}{\kappa \sigma + \lambda}
 = -2 H \frac{ K + \lambda}{\kappa K}
\]
(a relation among two angle invariants and a surface invariant).
\end{Remark}

From now on, until otherwise stated, we use the Chebyshev
parameterisation, i.e., we consider the first fundamental
form~\eqref{eq:Cheb:Iff}, leaving the second fundamental form arbitrary.
We assume that $\sin\f \ne 0$ henceforth, i.e., we assume that all points
are nonsingular in the sense of Definition~\ref{def:net}.

Let us introduce variables $\h ij$ by
\[
% \numbered\label{h2b}
\II ij = \h ij \sin\f.
\]
In terms of $\h ij$, the Gauss\ and the Schief curvatures are simply
\[
K = \h11 \h22 - \h12^2 = \det h,
\qquad
\sigma = \h12,
\]
while condition~\eqref{CondII}  becomes
\begin{gather}\label{CondII:h}
\m \bigl(\h11 \h22 - \h12^2\bigr) + \k \h12 + \l = 0.
\end{gather}
The Gauss--Weingarten system is
\begin{gather}\label{GW}
\begin{split}
&\ve r_{xx} = \h11\sin\f\,\ve n + \f_x\cot\f\,\ve r_x - (\f_x/{\sin\f})\,\ve r_y, \\
&\ve r_{xy} = \h12 \sin\f\,\ve n, \\
&\ve r_{yy} = \h22 \sin\f\,\ve n + \f_y \cot\f\,\ve r_y - (\f_y/{\sin\f})\,\ve r_x, \\
&\ve n_x = \frac{\h12 \cos\f - \h11}{\sin\f}\,\ve r_x
 + \frac{\h11 \cos\f - \h12}{\sin\f}\,\ve r_y , \\
&\ve n_y = \frac{\h22 \cos\f - \h12}{\sin\f}\,\ve r_x
 + \frac{\h12 \cos\f - \h22}{\sin\f}\,\ve r_y,
\end{split}
\end{gather}
the Gauss--Mainardi--Codazzi equations
(the compatibility conditions of the Gauss--Weingarten system) being
\begin{gather}
\begin{split}
& \f_{xy} + K \sin\f = 0,  \\
&\h{11}{,y} = \h{12}{,x} - \h11 \f_y \cot\f + \h22 \f_x/{\sin\f},  \\
&\h{12}{,y} = \h{22}{,x} - \h11 \f_y/{\sin\f} + \h22 \f_x \cot\f.
\end{split}\label{GMC}
\end{gather}
These systems should be completed with condition~\eqref{CondII:h}.
We do this by solving~\eqref{CondII:h} for $\h22$ and inserting
\begin{gather}\label{eq:h22}
\h22 = \frac{\m \h12^2- \k\h12 -\l}{\m \h11}
\end{gather}
into~\eqref{GW} and~\eqref{GMC}.

\section{Vector conservation laws}\label{sect:vcl}

In this section, we look for vector conservation laws of the form
$\ve P \dif x + \ve Q \dif y$, where
$\ve P$, $\ve Q$ are linear combinations of $\ve r_x$, $\ve r_y$, $\ve n$ such that
\[
D_y \ve P - D_x \ve Q = 0
\]
holds as a consequence of the Gauss--Mainardi--Codazzi
equations~\eqref{GMC} and the Gauss--Wein\-gar\-ten equations~\eqref{GW}
under condition~\eqref{CondII}.
For every vector conservation law, we define the associated vector potential
$\ve w$ to be a vector satisfying
$\dif \ve w = \ve P \dif x + \ve Q \dif y$, that is,
$\ve w_x = \ve P$, $\ve w_y = \ve Q$.
The vector conservation law is said to be trivial if the corresponding
potential $\ve w$ can be found among the local functions as a
linear combination of $\ve r_x$, $\ve r_y$, $\ve n$, the coefficients being functions
of $x$, $y$, $\f$, $\h11$, $\h12$ and their derivatives.

Finding vector conservation laws is no harder than finding scalar ones.
In our case, the main obstacle is that the Gauss--Weingarten system is overdetermined
and, therefore,
we cannot use the correspondence between conservation laws and cosymmetries.
Wolf's~\cite{Wolf} comparison of four approaches to computation of
conservation laws indicates that the method that is most likely to lead to
an answer, is the following (the third) one.

Let $W_i = 0$, $i = 1, \dots, 3$, be individual equations of the
Gauss--Mainardi--Codazzi system~\eqref{GMC} and
$\ve W_i = 0$, $i = 1, \dots, 5$,
individual equations of the Gauss--Weingarten system~\eqref{GW}.
For further reference,
\begin{gather*}
W_2 = -\h{11}{,y} + \h{12}{,x} - \h11 \f_y \cot\f + \h22 \f_x/{\sin\f}, \\
W_3 = -\h{12}{,y} + \h{22}{,x} - \h11 \f_y/{\sin\f} + \h22 \f_x \cot\f, \\
\ve W_1 = -\ve r_{xx} + \h11 \sin\f\,\ve n +  \f_x\cot\f\,\ve r_x - (\f_x/\sin\f)\,\ve r_y, \\
\ve W_2 = -\ve r_{xy} + \h12\sin\f\,\ve n, \\
\ve W_3 = -\ve r_{yy} + \h22\sin\f\,\ve n + \f_y \cot\f\,\ve r_y
 - (\f_y/\sin\f)\,\ve r_x,
\end{gather*}
(we omit $W_1$, $\ve W_4$ and $\ve W_5$, which we shall not need explicitly).
Then we can write
\[
D_y \ve P - D_x \ve Q = \sum \ve C_i W_i + \sum C_i \ve W_i,
\]
for suitable characteristics $\ve C_1$, $\ve C_2$, $\ve C_3$ and $C_1, \dots, C_5$.
Applying the Euler--Lagrange operator
\[
\frac{\delta}{\delta z} = \sum_J (-D)_J \frac{\partial}{\partial z}
\]
with $z$ running through all
dependent variables $z = \ve r, \ve n, \f, \h11, \h12$, we get
\begin{gather}\label{EL}
0 = \sum_J (-D)_J \frac{\partial}{\partial z}
   \Bigl(\sum \ve C_i W_i + \sum C_i \ve W_i\Bigr),
 \qquad z = \ve r, \ve n, \f, \h11, \h12.
\end{gather}
These are five equations on the eight unknowns
$\ve C_1$, $\ve C_2$, $\ve C_3$, $C_1, \dots, C_5$.
Three ignorable solutions correspond to the trivial conservation laws
$\dif\ve n$,
$\dif\ve r_x$,
$\dif\ve r_y$.
A non-ignorable solution to~\eqref{EL}~is\looseness=-1
\begin{alignat*}{3}
&C_1 = \h22, \qquad &&       \ve C_1 = 0,& \\
&C_2 = 2\k - 2\h12, \qquad && \ve C_2 = \ve r_y, &\\
&C_3 = \h11, \qquad && \ve C_3 = -\ve r_x,&
\end{alignat*}
valid if and only if $\l = 0$.
This leads us to the following proposition.

\begin{Proposition}\label{prop:vcl}
Assuming $\sin\f \ne 0$, expressions
\[
\ve P = (\h12 - \k)\,\ve r_x - \h11\,\ve r_y, \qquad
\ve Q = \h22\,\ve r_x + (\k - \h12)\,\ve r_y
\]
 are components of a vector conservation law if and only if $\l = 0$.
\end{Proposition}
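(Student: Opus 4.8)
The plan is to verify directly that $D_y\ve P - D_x\ve Q$ vanishes modulo the Gauss--Mainardi--Codazzi equations~\eqref{GMC}, the Gauss--Weingarten equations~\eqref{GW}, and the constraint~\eqref{eq:h22}, and to show that the remainder is a constant multiple of $\l$ times a nonvanishing field, so that vanishing forces $\l = 0$ and conversely. First I would compute $D_y\ve P$ and $D_x\ve Q$ termwise. Since $\ve P$ and $\ve Q$ are linear combinations of $\ve r_x$, $\ve r_y$ with coefficients in $\f$, $\h11$, $\h12$, differentiating produces two kinds of terms: derivatives of the coefficients (which bring in $\f_x$, $\f_y$, $\h{11}{,x}$, $\h{11}{,y}$, $\h{12}{,x}$, $\h{12}{,y}$, and $\h{22}$ through its $x$-, $y$-derivatives), and second derivatives $\ve r_{xx}$, $\ve r_{xy}$, $\ve r_{yy}$ of the position vector.

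The key step is to eliminate all second derivatives of $\ve r$ by substituting the Gauss--Weingarten relations~\eqref{GW}, thereby reexpressing everything in the frame $\{\ve r_x, \ve r_y, \ve n\}$. After this substitution, $D_y\ve P - D_x\ve Q$ becomes a combination $a\,\ve r_x + b\,\ve r_y + c\,\ve n$, where $a$, $b$, $c$ are scalar expressions in $\f$, the $\h ij$, and their first derivatives. Because $\ve r_x$, $\ve r_y$, $\ve n$ are linearly independent (as $\sin\f\ne0$), the conservation law holds if and only if $a = b = c = 0$ identically on solutions.

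The next step is to simplify $a$, $b$, $c$ using~\eqref{GMC} to replace the mixed derivatives $\h{11}{,y}$ and $\h{12}{,y}$, and to eliminate $\h22$ and its derivatives by the constraint~\eqref{eq:h22} (equivalently, by differentiating the algebraic relation~\eqref{CondII:h}). I expect the coefficient $c$ of $\ve n$ to reduce identically to zero without any condition, reflecting that $\ve P$, $\ve Q$ lie in the tangent plane; this is essentially the choice $\ve C_1 = 0$ in the characteristic computation preceding the statement. The tangential coefficients $a$ and $b$ should collapse to scalar multiples of the constraint and of $\l$; after full reduction I anticipate that the residual obstruction is precisely proportional to $\l$, so that $a = b = 0$ for all solutions exactly when $\l = 0$.

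The main obstacle will be the bookkeeping in the elimination of $\h22$: since $\h22$ enters $\ve Q$, $\ve W_3$, and the Mainardi--Codazzi equations, and is itself a rational function of $\h11$ and $\h12$ by~\eqref{eq:h22}, its derivatives $\h{22}{,x}$ and $\h{22}{,y}$ must be computed consistently and the denominators $\m\h11$ cleared, which is where sign errors and spurious $\l$-dependent terms most easily creep in. The cleanest route is to keep $\m = 1$ (as assumed in the excerpt) and to carry $\l$ symbolically throughout, collecting the final answer as (nonvanishing factor)$\cdot\l$, which simultaneously proves both directions of the ``if and only if.''
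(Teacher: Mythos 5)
Your strategy coincides with the paper's: expand $D_y\ve P - D_x\ve Q$, substitute the Gauss--Weingarten relations \eqref{GW} to express the result in the frame $\ve r_x$, $\ve r_y$, $\ve n$, and reduce modulo the Gauss--Mainardi--Codazzi equations and the constraint; the paper records the outcome as the single identity $D_y\ve P - D_x\ve Q = 2\l\sin\f\,\ve n$. But your prediction of where the $\l$-obstruction lives is exactly backwards, and the heuristic you offer for it is false. The tangential coefficients vanish identically by the two Mainardi--Codazzi equations alone: the coefficient of $\ve r_x$ is $\h{12}{,y}-\h{22}{,x}+\h11\f_y/\sin\f-\h22\f_x\cot\f = -W_3$ and that of $\ve r_y$ is $\h{12}{,x}-\h{11}{,y}-\h11\f_y\cot\f+\h22\f_x/\sin\f = W_2$; neither the constraint nor $\l$ enters there. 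The entire obstruction sits in the normal component: the second derivatives contribute $\bigl(2(\h12-\k)\h12 - 2\h11\h22\bigr)\sin\f = -2\bigl(\h11\h22-\h12^2+\k\h12\bigr)\sin\f = 2\l\sin\f$ by \eqref{CondII:h} with $\m=1$. The claim that the $\ve n$-coefficient must vanish ``reflecting that $\ve P$, $\ve Q$ lie in the tangent plane'' is a conceptual error: differentiating tangential fields produces normal components through the second fundamental form ($\ve r_{xy}=\h12\sin\f\,\ve n$, and so on), and it is precisely this normal component that carries the factor $\l$ and makes the hypothesis $\sin\f\ne0$ necessary for the ``only if'' direction. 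Had you trusted that heuristic and not computed the $\ve n$-coefficient, you would have missed the entire content of the proposition. (Relatedly, $\ve C_1=0$ in the characteristic computation says only that the Gauss equation $W_1$ is not used; it does not say that the normal component of the curl vanishes.)

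A minor practical remark: the elimination of $\h22$ that you single out as the main bookkeeping hazard is not needed. Keep $\h22$ as an independent symbol; its derivative $\h{22}{,x}$ cancels against $\h{12}{,y}$ through the second Mainardi--Codazzi equation, and the algebraic relation \eqref{CondII:h} is invoked exactly once, at the very end, to evaluate the normal coefficient. No differentiation of \eqref{eq:h22} occurs.
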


\begin{proof}
It is straightforward to check that
$D_y \ve P - D_x \ve Q = 2\l \sin\f\, \ve n$,
which is zero if and only if $\lambda = 0$.
\end{proof}

The vanishing of $\l$ (the white circle in Figure~\ref{fig:world})
means that the Schief curvature $\sigma$ is proportional to the Gauss\
curvature $K$.
After the concordance of the two measures, we introduce the following terminology
(applicable to arbitrary nets, non necessarily Chebyshev ones).

\begin{Definition}
%\label{def:conc}
Nets satisfying $K = \k \sigma$, $\k \in \mathbb R$,
will be called {\it concordant nets}.
\end{Definition}

By Remark~\ref{rem:CondII:ways} and  formula~\eqref{eq:cotphiIII},
an equivalent formulation of concordance is
\[
{\cot\omegaIII} + {\cot\omega} = -2 H/\k.
\]

\section{From concordant nets to pairs of pseudospherical surfaces}
\label{sect:ccnets->PS}

In this section, $x$, $y$ continue to denote the Chebyshev parameters.

Following Proposition~\ref{prop:vcl},
let $\ve m$ denote the vector potential satisfying
\begin{gather}\label{m}
\begin{split}
&\ve m_x = (\h12 - \k)\,\ve r_x - \h11\,\ve r_y, \\
&\ve m_y = \h22\,\ve r_x + (\k - \h12)\,\ve r_y.
\end{split}
\end{gather}
The vector $\ve m$ is crucial in what follows.

\begin{Definition}
We define the {\it associated surfaces\/} $S^+$, $S^-$ of a concordant
net by the parameterisations
\begin{gather}\label{eq:assoc}
\ve r^+ = \ve r + \ve m/\k, \qquad
\ve r^- = \ve r - \ve m/\k.
\end{gather}
\end{Definition}

\begin{Theorem}\label{thm:ccn->2ps}
Consider  a concordant Chebyshev net satisfying  $K = \k \sigma$.
Then
\begin{enumerate}\itemsep=0pt
\item[{\rm (i)}]  the associated surfaces  $\ve r^+$, $\ve r^-$ are regular
wherever $\sigma \ne 0$ and $\sin\omega \ne 0${\rm;}

\item[{\rm (ii)}]  $\ve r^+$, $\ve r^-$ are pseudospherical of the Gauss\ curvature $-\kappa^2${\rm;}

\item[{\rm (iii)}]  all three surfaces $\ve r^+$, $\ve r^-$, $\ve r$ have one and the same normal
vector $\ve n$ at the corresponding points\/{\rm;}

\item[{\rm (iv)}]  assuming that $x$, $y$ are Chebyshev parameters,
$[\partial/\partial x]$ and $[\partial/\partial y]$ are
asymptotic directions for $\ve r^+$ and $\ve r^-$, respectively.
\end{enumerate}
\end{Theorem}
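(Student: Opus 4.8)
The plan is to prove all four assertions by direct computation from the defining relations \eqref{m} and \eqref{eq:assoc}, exploiting the concordance hypothesis $\l = 0$ (equivalently $K = \k\sigma$) throughout. First I would compute the tangent vectors $\ve r^+_x$, $\ve r^+_y$ and $\ve r^-_x$, $\ve r^-_y$ by differentiating \eqref{eq:assoc} and substituting \eqref{m}. Using $\sigma = \h12$ and writing $\h22$ via \eqref{eq:h22} with $\l = 0$, namely $\h22 = (\m\h12^2 - \k\h12)/(\m\h11) = \h12(\h12 - \k)/\h11$ (recall $\m = 1$), I expect the tangent vectors to simplify to scalar multiples of $\ve r_x$ and $\ve r_y$. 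Indeed $\ve r^+_x = \ve r_x + \ve m_x/\k = \ve r_x + \bigl((\h12 - \k)\ve r_x - \h11\ve r_y\bigr)/\k = (\h12/\k)\,\ve r_x - (\h11/\k)\,\ve r_y$, and similarly for the others. This makes clear that each associated surface carries a net whose directions are explicit combinations of the original coordinate directions.

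\textbf{Proving (iii) and (iv).}
From the tangent expressions, the normal to $S^+$ is proportional to $\ve r^+_x \times \ve r^+_y$. Since both $\ve r^+_x$ and $\ve r^+_y$ are linear combinations of $\ve r_x$, $\ve r_y$, their cross product is a scalar multiple of $\ve r_x \times \ve r_y$, hence parallel to $\ve n$; checking the sign of the scalar (which involves $\sigma$ and $\sin\omega$) gives (iii). For (iv), the direction $[\partial/\partial x]$ is asymptotic for $\ve r^+$ precisely when $\ve r^+_{xx}\cdot\ve n^+ = 0$, i.e.\ when the normal curvature of $S^+$ in the $x$-direction vanishes. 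I would compute $\ve r^+_{xx}$ using the Gauss--Weingarten equations \eqref{GW} together with the derivatives of $\h11$, $\h12$ supplied by the Mainardi--Codazzi equations \eqref{GMC}, then dot with $\ve n$. The hypothesis $K = \k\sigma$ should force this to vanish; the analogous computation for $\ve r^-$ and the $y$-direction is symmetric.

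\textbf{Proving (ii), the pseudospherical property, and (i), regularity.}
For (ii) I would compute the first and second fundamental forms of $\ve r^\pm$ from the tangent vectors above and from $\ve r^\pm_{ij}\cdot\ve n$, then form the Gauss curvature $K^\pm = \detII/\detI$ for each associated surface and verify $K^+ = K^- = -\k^2$. I expect the concordance relation to collapse the numerator and denominator into a clean ratio; using $\h22 = \h12(\h12 - \k)/\h11$ repeatedly should make the $\h{ij}$-dependence cancel, leaving the constant $-\k^2$. Assertion (i) follows as a byproduct: $\ve r^\pm$ is regular exactly where $\ve r^\pm_x \times \ve r^\pm_y \ne 0$, and since this cross product is a scalar multiple of $\ve r_x \times \ve r_y$ with coefficient built from $\sigma$ and $\sin\omega$, nonvanishing of both $\sigma$ and $\sin\omega$ guarantees regularity.

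\textbf{Main obstacle.}
The hard part will be organising the second-order computation in (ii) and (iv) so that the algebra stays manageable: $\ve r^\pm_{xx}$, $\ve r^\pm_{xy}$, $\ve r^\pm_{yy}$ each require differentiating the already-substituted tangent vectors, which brings in $\h{11,x}$, $\h{12,y}$, and so on, and these must be eliminated using \eqref{GMC} before the concordance relation $\h11\h22 = \h12(\h12 - \k)$ can be applied to produce the cancellations. The delicate bookkeeping is ensuring that every derivative of $\h{ij}$ is consistently replaced and that the factor $1/\k$ from \eqref{eq:assoc} combines correctly with the $\sin\f$ factors from \eqref{GW}; once those substitutions are in place, I expect the constancy $K^\pm = -\k^2$ to emerge essentially automatically.
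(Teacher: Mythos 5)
Your proposal is correct and follows essentially the same route as the paper: compute $\ve r^\pm_x$, $\ve r^\pm_y$ as explicit tangential combinations of $\ve r_x$, $\ve r_y$, read off (iii) and (i) from the cross product and $\det {\rm I}^\pm$, and obtain (iv) and (ii) from the second fundamental forms and the ratio $\det{\rm II}^\pm/\det{\rm I}^\pm = -\k^2$. The only remark worth adding is that the second-order step is lighter than you anticipate: in ${\rm II}^\pm_{ij} = \ve r^\pm_{ij}\cdot\ve n$ the terms carrying $h_{11,x}$, $h_{12,y}$, etc.\ multiply tangent vectors and drop out, so no elimination via the Mainardi--Codazzi equations \eqref{GMC} is required --- only the pointwise algebraic relation $K = \k\sigma$ in the form $\h11\h22 - \h12^2 = \k\h12$.
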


\begin{proof}
Obviously from formulas~\eqref{m} and~\eqref{eq:assoc},
$\ve n$ is orthogonal to both $\ve r^\pm_x$ and $\ve r^\pm_y$,
and the third statement follows.

Computing the components of the corresponding fundamental forms ${\rm I}^\pm$ and ${\rm II}^\pm$,
we get
\begin{gather*}
\k^2 {\rm I}^+_{11} = \h11^2 - 2 \cos\f\,\h11\h12 + \h12^2, \\
\k^2 {\rm I}^+_{12} = \h11 (\h12 - 2 \k)
 - 2 \cos\f\,\h12 \bigl(\h12 - \tfrac32 \k\bigr) + \h12 \h22, \\
\k^2 {\rm I}^+_{22} = (\h12 - 2 \k)^2 - 2 \cos\f\,(\h12 - 2 \k)\h22 + \h22^2
\end{gather*}
and, symmetrically,
\begin{gather*}
\k^2 {\rm I}^-_{11} = \h11^2 - 2 \cos\f\,(\h12 - 2 \k)\h11
 + (\h12 - 2 \k)^2, \\
\k^2 {\rm I}^-_{12} = \h11 \h12 - 2 \cos\f\,\h12 \bigl(\h12 - \tfrac32 \k\bigr)
 + (\h12 - 2 \k)\h22, \\
\k^2 {\rm I}^-_{22} = \h12^2 - 2 \cos\f\,\h12\h22 + \h22^2.
\end{gather*}
Then $\det {\rm I}^\pm = (\sigma/\k)^2 \sin^2\omega$ is nonzero
wherever $\sigma \ne 0$ and $\sin\omega \ne 0$, which proves the first statement.

Concerning ${\rm II}^\pm$ we have
\begin{alignat*}{4}
&{\rm II}^+_{11} = \hm 0,\qquad && {\rm II}^+_{12} = \hm \sin\f\,\h12,\qquad &&
{\rm II}^+_{22} = 2 \sin\f\,\h22, &\\
&{\rm II}^-_{11} = -2 \sin\f\,\h11,\qquad && {\rm II}^-_{12} = -\sin\f\,\h12,\qquad &&
{\rm II}^-_{22} = 0.&
\end{alignat*}
The vanishing of ${\rm II}^+_{11}$ and ${\rm II}^-_{22}$ reveals the asymptotic directions
$\partial/\partial x$ and $\partial/\partial y$, which proves the fourth statement.

Using equation~\eqref{CondII}$|_{\l = 0}$, we get
\[
K^+ = \frac{\det {\rm II}^+}{\det {\rm I}^+} = -\k^2,
\qquad
K^- = \frac{\det {\rm II}^-}{\det {\rm I}^-} = -\k^2,
\]
which proves the second statement.
\end{proof}

To equip the surfaces $S^+$, $S^-$ with the
asymptotic Chebyshev parameterisations,
we employ the mean curvatures, which are easily seen to be
\begin{gather}\label{Hab}
H^+ = \frac{\h12 \sin\f}{\h12 \cos\f - \h11},
\qquad H^- = \frac{\h12 \sin\f}{\h22 - \h12 \cos\f}.
\end{gather}
Here and in what follows,
$\h22 = \bigl(\h12^2- \k\h12\bigr)/\h11$
by formula~\eqref{eq:h22}.

\begin{Proposition} \label{prop:xieta}
Denote
\[
% \numbered\label{phiab}
\omegaa = -{\arctan} \frac{\k}{H^+},
\qquad
\omegab = {\arctan} \frac{\k}{H^-},
\]
where $H^+$ and $H^-$ are given by formulas~\eqref{Hab}.
Let  $\xi^- = x$, $\eta^+ = y$.
In the notation from the proof of Theorem~{\rm \ref{thm:ccn->2ps}},
define $\xi^+$ and $\eta^-$ by compatible equations
\begin{gather}\label{eq:x'}
\xi^+_x = \sqrt{{\rm I}^+_{11}}
 = \frac{1}{\k} \sqrt{\h11^2 - 2\h11\h12 \cos\f + \h12^2},
\qquad
\xi^+_y = \frac{\h22}{\h12} \xi^+_x
\end{gather}
and
\begin{gather}\label{eq:y'}
\eta^-_x = \frac{\h11}{\h12} \eta^-_y,
\qquad
\eta^-_y = \sqrt{{\rm II}^-_{22}}
 = \frac{1}{\k} \sqrt{\h12^2 - 2\h12\h22 \cos\f + \h22^2},
\end{gather}
respectively.
Then $\xi^+$, $\eta^+$ and $\xi^-$, $\eta^-$ are the corresponding asymptotic
Chebyshev parameters on $\ve r^+$ and~$\ve r^-$, while
$\phi^+$ and $\phi^-$ are the corresponding Chebyshev angles.
\end{Proposition}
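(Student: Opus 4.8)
The plan is to prove everything for $\ve r^+$ and to obtain the assertions about $\ve r^-$ from the discrete symmetry $x\leftrightarrow y$, $\h11\leftrightarrow\h22$, $\ve m\to-\ve m$, which (by~\eqref{m} and~\eqref{eq:assoc}) swaps $\ve r^+$ with $\ve r^-$ and interchanges the roles of $\xi^\pm,\eta^\pm$. I would start from the fundamental coefficients computed in the proof of Theorem~\ref{thm:ccn->2ps}. Since ${\rm II}^+_{11}=0$, the $x$-curves are asymptotic on $\ve r^+$; solving the asymptotic-line equation ${\rm II}^+_{11}\,\dif x^2+2{\rm II}^+_{12}\,\dif x\,\dif y+{\rm II}^+_{22}\,\dif y^2=0$ with those coefficients gives the transversal asymptotic direction $\h12\,\dif x+\h22\,\dif y=0$. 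The two requirements for an asymptotic Chebyshev net---that $\xi^+$ be constant along the transversal asymptotic family and be the arc length along the $x$-curves---then force exactly $\xi^+_y=(\h22/\h12)\,\xi^+_x$ and $\xi^+_x=\sqrt{{\rm I}^+_{11}}$, i.e.\ the system~\eqref{eq:x'}. The analogous discussion on $\ve r^-$, where the $y$-curves are asymptotic (${\rm II}^-_{22}=0$) and the transversal direction is $\h11\,\dif x+\h12\,\dif y=0$, yields~\eqref{eq:y'} and fixes $\eta^-$.

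The crux is the compatibility of the overdetermined system~\eqref{eq:x'}, i.e.\ $\partial_y\xi^+_x=\partial_x\xi^+_y$; this is the one place where integrability of the Gauss--Mainardi--Codazzi system is really used. I would differentiate $\xi^+_x=\tfrac1\k\sqrt{\h11^2-2\h11\h12\cos\f+\h12^2}$ in $y$ and $(\h22/\h12)\,\xi^+_x$ in $x$, then eliminate the resulting derivatives $\h{11}{,y}$, $\h{12}{,y}$ by the two Codazzi equations in~\eqref{GMC}, replace $\f_{xy}$ by $-K\sin\f$ from the Gauss equation, and use the constitutive relation~\eqref{eq:h22} together with its $x$-derivative to handle $\h22$ and $\h{22}{,x}$. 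After these substitutions the two sides should coincide. This lengthy but purely mechanical simplification is the main obstacle, with concordance entering precisely through $\h11\h22=\h12^2-\k\h12$ (equivalently~\eqref{eq:h22}); compatibility of~\eqref{eq:y'} then follows by the symmetry above.

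Granting that $\xi^+$ exists, the Chebyshev property is short. Along the $x$-curves one has $\ve r^+_{\xi^+}=(1/\xi^+_x)\,\ve r^+_x$, a unit vector by the definition $\xi^+_x=\sqrt{{\rm I}^+_{11}}$. For the transversal family I would compute $\ve r^+_{\eta^+}=-(\h22/\h12)\,\ve r^+_x+\ve r^+_y$, expressing $\ve r^+_x$, $\ve r^+_y$ through $\ve r_x$, $\ve r_y$ by~\eqref{m} and~\eqref{eq:assoc}; the concordance relation $\h11\h22=\h12^2-\k\h12$ then collapses this to the clean identity $\ve r^+_{\eta^+}=\ve r_y$, which is a unit vector because the underlying net is Chebyshev. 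Hence both coordinate curves are unit-speed asymptotic lines, so $(\xi^+,\eta^+)$ is an asymptotic Chebyshev parameterisation, with first fundamental form $\dif\xi^{+2}+2\cos\phi^+\,\dif\xi^+\,\dif\eta^++\dif\eta^{+2}$.

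Finally I would read off the Chebyshev angle as the intersection angle of the two asymptotic directions, computing $\cos\phi^+=\ve r^+_{\xi^+}\cdot\ve r^+_{\eta^+}=(\h12\cos\f-\h11)/\sqrt{\h11^2-2\h11\h12\cos\f+\h12^2}$ together with $\sin\phi^+$, and then rewriting the resulting $\tan\phi^+$ through the mean curvature~\eqref{Hab} so as to match the stated form $\phi^+=-\arctan(\k/H^+)$; the value $\phi^-$ follows by symmetry. Thus the entire argument reduces to the single genuine difficulty of the compatibility check, all remaining steps being direct substitutions hinging on the concordance relation.
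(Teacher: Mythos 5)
Your proposal is correct and takes essentially the same route as the paper, whose entire proof is the assertion that systems \eqref{eq:x'}--\eqref{eq:y'} are compatible and that ${\rm I}^\pm$, ${\rm II}^\pm$ take the stated Chebyshev/asymptotic form in the new coordinates ``by straightforward computation''; you merely fill in those computations, and your organising observations (deriving \eqref{eq:x'} from the asymptotic-line equation, the clean identity $\ve r^+_{\eta^+}=\ve r_y$ obtained from the concordance relation $\h11\h22=\h12^2-\k\h12$, and the $x\leftrightarrow y$, $\h11\leftrightarrow\h22$ symmetry that reduces everything to the $\ve r^+$ case) are all sound and welcome. One small caution for your final step: computing directly as you propose gives $\cos\phi^+=(\h12\cos\f-\h11)/\sqrt{\h11^2-2\h11\h12\cos\f+\h12^2}$ and $\sin\phi^+=\h12\sin\f/\sqrt{\h11^2-2\h11\h12\cos\f+\h12^2}$, so that $\tan\phi^+$ equals the expression printed in \eqref{Hab} itself rather than $-\k$ divided by it, which matches $\phi^+=-\arctan(\k/H^+)$ only if $H^+$ is read as the genuine mean curvature $-\k\cot\phi^+=\k(\h11-\h12\cos\f)/(\h12\sin\f)$ --- so verify the normalisation of \eqref{Hab} rather than matching formally.
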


\begin{proof}
One can check that systems~\eqref{eq:x'} and~\eqref{eq:y'} are indeed compatible and
\begin{gather*}
{\rm I}^\pm = (\dif\xi^\pm)^2 + 2 \cos\phi^\pm \dif\xi^\pm \dif\eta^\pm + (\dif\eta^\pm)^2, \\
{\rm II}^\pm = \pm 2 \k \sin\phi^\pm \dif\xi^\pm \dif\eta^\pm
\end{gather*}
by straightforward computation.
This implies both statements.
\end{proof}

\begin{Corollary} %\label{cor:sineGordon}
In the notation from Proposition~\rmref{prop:xieta},
\[ % \numbered\label{eq:sineGordon}
\phi^\pm_{\xi^\pm \eta^\pm} = \k^2 \sin\phi^\pm,
\]
meaning that $\phi^\pm(\xi^\pm,\eta^\pm)$ are solutions of the
sine-Gordon equation.
\end{Corollary}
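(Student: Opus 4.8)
The plan is to read the result off the Gauss equation written in Chebyshev form. Recall that the first equation of the Gauss--Mainardi--Codazzi system \eqref{GMC}, namely $\f_{xy} + K\sin\f = 0$, is precisely the Gauss equation for a metric in the Chebyshev form \eqref{eq:Cheb:Iff}; by the Theorema Egregium it depends only on the first fundamental form and expresses the Gauss curvature intrinsically in terms of the Chebyshev angle. The fact that it was derived for the original concordant net is immaterial: what enters is only the shape of the metric, not the second fundamental form.

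By Proposition \ref{prop:xieta}, the first fundamental forms of the associated surfaces are ${\rm I}^\pm = (\dif\xi^\pm)^2 + 2\cos\phi^\pm\,\dif\xi^\pm\,\dif\eta^\pm + (\dif\eta^\pm)^2$, which is exactly the Chebyshev form \eqref{eq:Cheb:Iff} with $\xi^\pm$, $\eta^\pm$ in place of $x$, $y$ and $\phi^\pm$ in place of $\f$. First I would therefore apply the Gauss equation verbatim to each surface $\ve r^\pm$, obtaining $\phi^\pm_{\xi^\pm\eta^\pm} + K^\pm\sin\phi^\pm = 0$, where $K^\pm$ denotes the Gauss curvature of $\ve r^\pm$.

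Next I would substitute the curvature value. By Theorem \ref{thm:ccn->2ps}(ii) the associated surfaces are pseudospherical with $K^\pm = -\k^2$, so the Gauss equation becomes $\phi^\pm_{\xi^\pm\eta^\pm} - \k^2\sin\phi^\pm = 0$, that is, the asserted $\phi^\pm_{\xi^\pm\eta^\pm} = \k^2\sin\phi^\pm$. The only point demanding care is the sign: one must confirm that the Chebyshev Gauss equation carries $K$ with the sign appearing in \eqref{GMC}, so that the negative constant curvature $-\k^2$ produces the $+\k^2$ on the right-hand side. Since this sign is already fixed by the derivation of \eqref{GMC}, no genuine obstacle remains, and the corollary is immediate from the two cited results.
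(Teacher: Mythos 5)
Your argument is correct and is exactly the reasoning the paper intends: the Corollary is stated without proof because it follows immediately from Proposition~\ref{prop:xieta} (which puts ${\rm I}^\pm$ in the Chebyshev form with angle $\phi^\pm$) combined with $K^\pm=-\k^2$ from Theorem~\ref{thm:ccn->2ps}\,(ii), via the intrinsic Gauss equation $\phi_{xy}+K\sin\phi=0$ for a Chebyshev metric. Your sign check is also right, so nothing is missing.
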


\begin{Proposition}\label{prop:D:xieta}
In the notation from Proposition~\rmref{prop:xieta},
the coordinate vector fields corresponding to $\xi^\pm$, $\eta^\pm$ are
\begin{gather*}
D_{\xi^+} = \frac{\k}
  {\sqrt{h_{11}^2 - 2 h_{11} h_{12} \cos\omega + h_{12}^2}} D_x,
 \qquad D_{\eta^+} = -\frac{h_{22}}{h_{12}} D_x + D_y,
\\
D_{\xi^-} = D_x - \frac{h_{11}}{h_{12}} D_y,
 \qquad D_{\eta^-} = \frac{\k}
  {\sqrt{\h12^2 - 2\h12\h22 \cos\f + \h22^2}} D_y.
\end{gather*}
\end{Proposition}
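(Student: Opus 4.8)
The plan is to invert the coordinate change, passing from the Chebyshev parameters $x$, $y$ to the asymptotic Chebyshev parameters $\xa$, $\ya$ on $\ve r^+$ and $\xb$, $\yb$ on $\ve r^-$, and to read off the required fields directly. Since the coordinate vector fields transform contragradiently to the coordinate differentials, the heart of the argument is a single $2\times2$ inversion of the Jacobian, whose entries are supplied by the defining relations~\eqref{eq:x'} and~\eqref{eq:y'}.

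For $\ve r^+$ I would exploit that $\ya = y$, so the Jacobian of $(\xa,\ya)$ with respect to $(x,y)$ is triangular. Writing out the chain rule $D_x = \xa_x\,D_{\xa} + \ya_x\,D_{\ya}$ and $D_y = \xa_y\,D_{\xa} + \ya_y\,D_{\ya}$, the relations $\ya_x = 0$, $\ya_y = 1$ collapse the first equation to $D_{\xa} = \frac{1}{\xa_x}\,D_x$, whereupon the second gives $D_{\ya} = D_y - \xa_y\,D_{\xa} = D_y - (\h22/\h12)\,D_x$, using $\xa_y = (\h22/\h12)\,\xa_x$ from~\eqref{eq:x'}. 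Substituting the explicit value $\xa_x = \frac1\k\sqrt{\h11^2 - 2\h11\h12\cos\f + \h12^2}$ yields the stated formulas for $D_{\xa}$ and $D_{\ya}$. The surface $\ve r^-$ is treated symmetrically: now $\xb = x$ makes the Jacobian triangular in the other corner, and the chain rule together with $\yb_x = (\h11/\h12)\,\yb_y$ and $\yb_y = \frac1\k\sqrt{\h12^2 - 2\h12\h22\cos\f + \h22^2}$ from~\eqref{eq:y'} gives $D_{\yb} = \frac{1}{\yb_y}\,D_y$ and $D_{\xb} = D_x - (\h11/\h12)\,D_y$, as claimed.

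No genuine obstacle arises; the calculation is short once the triangular structure of both Jacobians is noticed, and the only point demanding care is keeping the inverse-transpose transformation law between differentials and coordinate vector fields straight. The compatibility of systems~\eqref{eq:x'} and~\eqref{eq:y'}, which guarantees that $\xa$ and $\yb$ are well-defined functions, has already been secured in Proposition~\ref{prop:xieta} and may be invoked without further comment.
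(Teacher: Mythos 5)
Your derivation is correct and amounts to essentially the same computation as the paper, which simply verifies the dual-basis relations $D_{\xi^\pm}\xi^\pm=1$, $D_{\xi^\pm}\eta^\pm=0$, $D_{\eta^\pm}\xi^\pm=0$, $D_{\eta^\pm}\eta^\pm=1$ together with $\bigl[D_{\xi^\pm},D_{\eta^\pm}\bigr]=0$ for the stated fields; your chain-rule inversion of the triangular Jacobians establishes exactly these relations, with the commutativity coming for free from the compatibility of~\eqref{eq:x'} and~\eqref{eq:y'} already secured in Proposition~\ref{prop:xieta}.
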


\begin{proof}
By straightforward verification of
$D_{\xi^\pm} \xi^\pm = 1$,  $D_{\xi^\pm} \eta^\pm = 0$,
$D_{\eta^\pm} \xi^\pm = 0$, $D_{\eta^\pm} \eta^\pm = 1$,
and $[D_{\xi^\pm}, D_{\eta^\pm}] = 0$.
\end{proof}

It is well known that the asymptotic Chebyshev net on
a~pseudospherical surface induces a~Chebyshev net on the Gauss
sphere (and vice versa).
Consequently, the pair $\ve r^\pm$ induces a pair of such nets.
Their relative position depends on the angle $\omega$ in a very simple way.

\begin{Proposition}
\label{prop:Chpair}
In the notation from Proposition~\rmref{prop:D:xieta},
\begin{enumerate}\itemsep=0pt
\item[{\rm (i)}] the fields $D_{\xi^\pm}$, $D_{\eta^\pm}$ induce a pair of Chebyshev
nets on the unit sphere{\/\rm;}

\item[{\rm (ii)}]
the oriented angle $\angle(D_{\xi^-} \ve n, D_{\eta^+} \ve n)$
equals $\pi + \omega$.
\end{enumerate}
\end{Proposition}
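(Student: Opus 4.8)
The plan is to treat the two parts separately: part (i) follows almost directly from the remark preceding the proposition, while the genuine content lies in the explicit angle computation of part (ii).

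For part (i), I would invoke the cited fact that the asymptotic Chebyshev net on a pseudospherical surface induces a Chebyshev net on the Gauss sphere. By Theorem~\ref{thm:ccn->2ps}(ii) both associated surfaces $\ve r^+$, $\ve r^-$ are pseudospherical, and by Proposition~\ref{prop:xieta} the parameters $\xi^\pm$, $\eta^\pm$ are asymptotic Chebyshev; since the corresponding coordinate fields are precisely the $D_{\xi^\pm}$, $D_{\eta^\pm}$ of Proposition~\ref{prop:D:xieta}, their images under the common Gauss map $\ve n$ are the two Chebyshev nets claimed. If a self-contained verification is wanted, I would compute the third fundamental forms from $\mathrm{III}^\pm = 2 H^\pm\,\mathrm{II}^\pm - K^\pm\,\mathrm{I}^\pm$ using the forms of Proposition~\ref{prop:xieta} together with $K^\pm = -\k^2$; the diagonal coefficients both reduce to the constant $\k^2$, so the Chebyshev criterion of Proposition~\ref{prop:Chebyshev}(iv) holds for the induced metric on the sphere.

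For part (ii), the idea is to express the two Gauss-map vectors inside the common tangent plane of $\ve r$ (which is the tangent plane of the sphere at $\ve n$, since all three surfaces share $\ve n$ by Theorem~\ref{thm:ccn->2ps}(iii)) and then read the oriented angle off a dot product and a triple product. Concretely, I would substitute the Weingarten equations for $\ve n_x$, $\ve n_y$ (the last two lines of~\eqref{GW}) into
\[ D_{\xi^-}\ve n = \ve n_x - \frac{\h11}{\h12}\,\ve n_y, \qquad D_{\eta^+}\ve n = -\frac{\h22}{\h12}\,\ve n_x + \ve n_y \]
taken from Proposition~\ref{prop:D:xieta}. The decisive simplification is concordance, $K = \k\sigma = \k\h12$: each numerator collapses to $\pm(\h11\h22 - \h12^2) = \pm K = \pm\k\h12$, cancelling the $\h12$ in the denominators and leaving
\[ D_{\xi^-}\ve n = \k\Bigl(-\cot\f\,\ve r_x + \tfrac{1}{\sin\f}\,\ve r_y\Bigr), \qquad D_{\eta^+}\ve n = \k\Bigl(\tfrac{1}{\sin\f}\,\ve r_x - \cot\f\,\ve r_y\Bigr). \]

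It then remains to evaluate everything in the Chebyshev metric $\ve r_x\cdot\ve r_x = \ve r_y\cdot\ve r_y = 1$, $\ve r_x\cdot\ve r_y = \cos\f$. I expect $|D_{\xi^-}\ve n| = |D_{\eta^+}\ve n| = |\k|$ and a dot product $D_{\xi^-}\ve n \cdot D_{\eta^+}\ve n = -\k^2\cos\f$, so the cosine of the angle equals $-\cos\omega$; using $\ve r_x\times\ve r_y = \sin\f\,\ve n$ I expect the oriented area $[D_{\xi^-}\ve n, D_{\eta^+}\ve n, \ve n] = -\k^2\sin\f$, so the oriented sine equals $-\sin\omega$. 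Since the cosine alone only gives $\pi\pm\omega$, the main point—and the only real obstacle, which is organisational rather than conceptual—is to use the triple product as well: with both $\cos = -\cos\omega$ and $\sin = -\sin\omega$ the orientation pins the angle down as $\pi + \omega$, completing the proof.
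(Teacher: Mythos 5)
Your proposal is correct and follows essentially the same route as the paper: part (i) comes down to the constancy ($=\k^2$) of the diagonal coefficients of the third fundamental form, which is precisely the set of identities $D_{\xi^\pm}\ve n\cdot D_{\xi^\pm}\ve n = D_{\eta^\pm}\ve n\cdot D_{\eta^\pm}\ve n=\k^2$ the paper verifies, and part (ii) is the same computation of the dot product and oriented triple product, yielding the same values $-\k^2\cos\omega$ and $-\k^2\sin\omega$. The only discrepancy is a sign convention: you substitute $K=\k\,\h12$ whereas the paper's working relation~\eqref{eq:h22} gives $K=-\k\,\h12$, so your intermediate vectors are the negatives of the paper's; since both vectors flip sign simultaneously, the angle $\pi+\omega$ is unaffected.
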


\begin{proof}
The tangent vectors to the Gauss sphere are
\begin{gather*}
% \numbered\label{eq:n:tangent}
D_{\xi^+} \ve n
 = \frac{\k}{\sin\omega}\,
   \frac{(\h11 - \h12 \cos\omega) \ve r_x
      + (\h11 \cos\omega - \h12) \ve r_y}
   {\sqrt{\h11^2 - 2 \h11 \h12 \cos\omega + \h12^2}},
\\
D_{\eta^+} \ve n
 = \frac{\k}{\sin\omega}\,(\cos\omega\,\ve r_y - \ve r_x),
\\
D_{\xi^-} \ve n
  = \frac{\k}{\sin\omega}\,(\cos\omega\,\ve r_x - \ve r_y),
\\
D_{\eta^+} \ve n
 = \frac{\k}{\sin\omega}\,
   \frac{(\h22 - \h12 \cos\omega) \ve r_y
      + (\h22 \cos\omega - \h12) \ve r_x}
   {\sqrt{\h12^2 - 2\h12\h22 \cos\f + \h22^2}}.
\end{gather*}
Statement (i) is easily verified by checking the identities
\[
D_{\xi^+} \ve n \cdot D_{\xi^+} \ve n
 = D_{\xi^-} \ve n \cdot D_{\xi^-} \ve n
 = D_{\eta^+} \ve n \cdot D_{\eta^+} \ve n
 = D_{\eta^-} \ve n \cdot D_{\eta^-} \ve n = \k^2.
\]

Let $\psi$ denote the oriented angle $\angle(D_{\xi^-} \ve n, D_{\eta^+} \ve n)$.
To prove (ii), one easily computes
\[
\cos\psi
 = \frac{D_{\xi^-} \ve n \cdot D_{\eta^+} \ve n}{\k^2}
 = -{\cos\omega},
\]
and
\[
\sin\psi\,\ve n
 = \frac{D_{\xi^-} \ve n \times D_{\eta^+} \ve n}{\k^2}
 = -{\sin\omega}\, \ve n.
\]
Therefore,  $\psi = \pi + \omega$.
\end{proof}

Finally, it is easy to check the Lelieuvre formulas~\cite[equation~(1.140)]{R-S}
\[
% \numbered\label{eq:Lelieuvre}
D_{\xi^\pm} \ve r^\pm
 = -\frac{1}{\k} D_{\xi^\pm}\ve n \times \ve n,
\qquad
D_{\eta^\pm} \ve r^\pm
 = \frac{1}{\k} D_{\eta^\pm}\ve n \times \ve n,
\]
which relate the pseudospherical surfaces $\ve r^\pm$ to their Gauss images.

\section{From pairs of pseudospherical surfaces to concordant nets}
\label{sect:PS->ccnets}

In this section, we prove the converse of Theorem~\ref{thm:ccn->2ps}.
Given a pair of pseudospherical surfaces
of equal constant negative Gaussian curvatures,
we construct the corresponding concordant Chebyshev net.
We draw inspiration from the results of the previous section,
but the proofs have very little in common.

We denote surfaces differently from the previous section.
This is not only more convenient for the proof of Theorem~\ref{thm:2ps->ccn},
but it also helps to separate the two proofs.
The reader may wish to consult Table~\ref{tab:sect:sect} for important matches and
differences.
Note that many concepts have no counterpart in the previous section and vice versa.
\begin{table}[th]\centering\renewcommand{\arraystretch}{1.2}
\begin{tabular}{lll}
Previous section & This section \\\hline
$\ve n$ & $ \ve n$ \\
$\ve r$, $\ve r^+$, $\ve r^-$ & $\bar{\ve r}$, $\ve r$, $\ve r'$ \\
$x$, $y$, $\xi^\pm$, $\eta^\pm$ & nothing \\
nothing & $p$, $q$, $\xi$, $\eta$ \\
$D_{\xi^\pm}$, $D_{\eta^\pm}$ & $\widehat{X_i}$ for various $\epsilon_i$
\end{tabular}
\caption{Translation table between Sections~\ref{sect:ccnets->PS} and~\ref{sect:PS->ccnets}.}\label{tab:sect:sect}
\end{table}

The key idea drawn from the previous section is the parallelism
induced by the coincidence of normal vectors.

\begin{Definition}
The {\it parallelism} \cite{LPE-1909,WCG-1922,KMP-1866}
between two surfaces $S$, $S'$ is a correspondence between~$S$ and~$S'$
such that the diagram
\begin{gather} \label{eq:diag}
\begin{split}&
\xymatrix{
S \ar@{<->}[rr]^{{\rm parallelism}} \ar[rd]_{\gamma} && S' \ar[ld]^{\gamma'}\\
 & \mathbb S^2 &
}\end{split}
\end{gather}
is commutative.
Here $\mathbb S^2$ is the unit sphere, while $\gamma$, $\gamma'$
denote the Gauss maps.
\end{Definition}

Obviously by the definition of the Gauss map,
the surfaces $S$, $S'$ have equal normals and equal tangent
planes at corresponding points.
This is why the parallelism is also known as the
{\it parallelism of normals} or the
{\it parallelism of tangent planes}.

The parallelism implies the possibility to establish
local parameterisations $\ve r, \ve r' \colon  U \to \ve E^3$ that
complete the commutative diagram~\eqref{eq:diag} into
\begin{gather*}
\begin{split}&
\xymatrix{
& \ar[ld]_{\ve r} U \ar[rd]^{\ve r'}  &\\
S \ar@{<->}[rr]^{{\rm parallelism}} \ar[rd]_{\gamma} && S' \ar[ld]^{\gamma'}\\
 & \,\mathbb S^2, &
}\end{split}
\end{gather*}
whenever $\operatorname{Im} \gamma$ intersects with $\operatorname{Im} \gamma'$.
Such maps $\ve r$, $\ve r'$ will be referred to as {\it parallel
parameterisations}.
They are not unique
since they can be combined with an arbitrary diffeomorphism~${U \to U}$.

To put it simply, $\ve n = \gamma \circ \ve r = \gamma' \circ \ve r' = \ve n'$
as maps $U \to \mathbb S^2$.
For generic surfaces, $\gamma$, $\gamma'$ are local diffeomorphisms.
If this is the case, parallel parameterisations locally exist.
However, the Gauss maps need not be global diffeomorphisms
(for a wealth of beautiful examples, see~\cite{DB-2017}).

\begin{Definition}
%\label{def:middle}
Consider a pair of surfaces $S$, $S'$.
The locus $\bar S$ of mid-points between points related by parallelism
is called the {\it middle surface}.
\end{Definition}

More explicitly, if $\ve r(p,q)$, $\ve r'(p,q)$ are
parallel local parameterisations of surfaces $S$, $S'$,
then
\[
\bar{\ve r}(p,q)= \tfrac12\ve r(p,q) + \tfrac12\ve r'(p,q)
\]
is the parallel parameterisation od $\bar S$.
Locally, the definition does not depend on the choice of parallel
parameterisations.
Needless to say, the normals $\bar{\ve n}(p,q) = \ve n(p,q) = \ve n'(p,q)$ coincide,
showing that $\bar S$ is also related by parallelism to both $S$, $S'$.
As a case in point, the middle surface of surfaces $\ve r^\pm$ defined
by formulas~\eqref{eq:assoc} is $\ve r$ in the notation from
Section~\ref{sect:ccnets->PS}.

As is well known, every pseudospherical surface carries an
asymptotic Chebyshev net~\cite{UD-1870}.
We shall show that for a generic pair of pseudospherical surfaces
these nets combine to two concordant nets on the middle surface.
This yields the following converse of Theorem~\ref{thm:ccn->2ps}.

\begin{Theorem}\label{thm:2ps->ccn}
Consider two pseudospherical surfaces $S$, $S'$
of equal constant negative Gaussian curvatures $K = K' = -\k^2$.
Consider a parallelism between $S$ and $S'$ and the corresponding middle surface $\bar S$.
On $\bar S$, consider the images of the asymptotic lines on $S$, $S'$ under the parallelism.
Assuming that no asymptotic direction on $S$ is taken to an asymptotic direction on $S'$,
the images combine to two concordant Chebyshev nets on $\bar S$.
\end{Theorem}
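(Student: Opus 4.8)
The plan is to reverse-engineer the construction from Section~\ref{sect:ccnets->PS}. There, starting from a concordant Chebyshev net $\ve r$ we produced two pseudospherical surfaces $\ve r^\pm$ sharing the normal $\ve n$ with $\ve r$, together with the Lelieuvre formulas relating each $\ve r^\pm$ to its Gauss image. Now I am given the two pseudospherical surfaces $S$, $S'$ and must recover $\bar S$ with its two nets. The key structural fact I would exploit is that parallelism forces $\ve n = \ve n'$ as maps $U \to \mathbb S^2$, so all three surfaces $S$, $S'$, $\bar S$ carry the \emph{same} Gauss image, and the asymptotic lines of $S$ and of $S'$ both descend to the common sphere $\mathbb S^2$. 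My first step is to fix a parallel parameterisation and pull the two asymptotic Chebyshev nets back to $\mathbb S^2$: each pseudospherical surface of curvature $-\k^2$ has a Gauss map whose spherical image carries a Chebyshev net (the spherical image of the asymptotic net), exactly as recalled just before the statement. So on $\mathbb S^2$ I obtain \emph{four} directions --- two from $S$, two from $S'$ --- and hence, generically, candidate nets on $\bar S$ by transporting these directions back through $\gamma^{-1}$.

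Next I would set up the Lelieuvre representation on each factor. Writing $K = K' = -\k^2$ and using the Lelieuvre formulas $\dif \ve r = \mp \frac1\k \dif\ve n \times \ve n$ along asymptotic directions (the same formulas displayed at the end of Section~\ref{sect:ccnets->PS}), I can express the tangent frames of $S$ and $S'$ entirely in terms of the shared $\ve n$ and its derivatives along the four spherical directions. Since $\bar{\ve r} = \frac12(\ve r + \ve r')$, the tangent vectors $\bar{\ve r}_*$ are averages of the Lelieuvre vectors, again built from $\ve n$ and its spherical derivatives. The central computation is then to pick the right two transversal directions on $\bar S$ --- I expect these to be the \emph{spherical} asymptotic directions of $S$ in one family and of $S'$ in the other (mirroring how in Proposition~\ref{prop:Chpair} the angle $\angle(D_{\xi^-}\ve n, D_{\eta^+}\ve n)$ came out as $\pi + \omega$) --- and to verify that, with respect to these, the net on $\bar S$ is Chebyshev and concordant.

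For the Chebyshev property I would invoke Proposition~\ref{prop:Chebyshev}: it suffices to produce parameters in which the two chosen direction fields have unit-length, equal-coefficient first fundamental form, or equivalently to check $\iota_1 = \iota_2 = 0$, i.e. $X_2 \I11 = 0 = X_1 \I22$. Because the Lelieuvre vectors have squared length governed by $\|\ve n_*\|^2$ and the curvature is constant $-\k^2$, the diagonal coefficients $\I11$, $\I22$ should come out constant along the transverse family, which is precisely the Chebyshev condition. For concordance I must verify $K_{\bar S} = \k\,\sigma$ on $\bar S$, with $\sigma$ the Schief curvature~\eqref{eq:sigma}; here I would compute $\II12 = \bar{\ve r}_{xy}\cdot\ve n$ and the Gauss curvature of $\bar S$ directly from the averaged frame, exploiting that the off-diagonal second fundamental form of each $\ve r^\pm$ was proportional to $\h12\sin\f$ in Section~\ref{sect:ccnets->PS}. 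The genericity hypothesis --- no asymptotic direction of $S$ maps to an asymptotic direction of $S'$ --- is exactly what guarantees the four spherical directions are pairwise transversal, so that the two recovered nets are genuine (non-degenerate) nets and $\det\bar{\mathrm I}\ne 0$.

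The main obstacle I anticipate is the bookkeeping that identifies $\bar S$ together with its recovered net with a concordant Chebyshev net in the sense of~\eqref{CondII}$|_{\l=0}$, rather than merely producing \emph{some} net on $\bar S$. Concretely, I must show that the constant $\k$ appearing in $K = K' = -\k^2$ is the \emph{same} $\k$ that enters the concordance relation $K_{\bar S} = \k\,\sigma$, and that the two families built from $S$ and from $S'$ are interchanged by the correct discrete symmetry (the sign flip of $\sigma$ noted after Figure~\ref{fig:world}). This amounts to checking a sign- and normalisation-consistent version of the computations in the proof of Theorem~\ref{thm:ccn->2ps}, run backwards; the genericity assumption is what keeps everything regular, and the parallelism diagram~\eqref{eq:diag} is what keeps the normal $\ve n$ --- and hence all the invariants expressed through $\ve n$ --- common to the whole construction.
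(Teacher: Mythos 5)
Your overall strategy --- work on the common Gauss sphere, represent the tangent frames of $S$, $S'$ via the Lelieuvre formulas, average to get $\bar{\ve r}$, and pair one asymptotic family of $S$ with one of $S'$ --- is a legitimately different route from the paper, which instead passes to Peterson coordinates diagonalising the mapping tensor $s^i_j$ (via Margulies' theorem), reduces everything to the single function $\xi$ with $\eta = 1/\xi$, and computes $\widetilde{\rm I}$, $\widetilde{\rm II}$, $\widetilde\sigma$, $\widetilde K$ explicitly. Your route is essentially the one the paper later packages as the corollary about two Chebyshev nets on the sphere, and you correctly identify the pairing (one family from each surface), the role of the genericity hypothesis, and the need to match the constant $\k$ in $K=-\k^2$ with the $\k$ in $K = \k\sigma$.

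However, there is a genuine gap at the crux of the argument: the Chebyshev property. You reduce it (correctly) to $X_2\,\widetilde{\rm I}_{11} = 0 = X_1\,\widetilde{\rm I}_{22}$, but your justification --- that ``the diagonal coefficients should come out constant along the transverse family'' because the Lelieuvre vectors have length controlled by $\|\ve n_*\|$ and the curvature is constant --- does not hold up. Constancy of $\|X_1\ve r\|$ on $S$ is automatic for the asymptotic Chebyshev parameters of $S$, but the net lives on $\bar S$, and $X_1\bar{\ve r}=\frac12(X_1\ve r + X_1\ve r')$ involves the derivative of $\ve r'$ along a direction that is \emph{not} asymptotic for $S'$; decomposing $X_1$ in the asymptotic frame of $S'$ shows that $\widetilde{\rm I}_{11}$ equals the square of a variable decomposition coefficient, so it is not constant --- only its derivative along the transverse family vanishes, and that is a nontrivial differential identity. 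In the paper this is exactly the hard step: $\widetilde\Gamma^1_{12}=\widetilde\Gamma^2_{21}=0$ is obtained only modulo the Mainardi--Codazzi equations of $S$ \emph{and} the Margulies compatibility equations $s^k_{i;j}=s^k_{j;i}$ of the parallelism. Your proposal never identifies which identities would force the vanishing, so the proof does not close; the concordance computation ($\widetilde K = \pm\k\widetilde\sigma$) is likewise asserted rather than carried out, though that part is a routine pointwise calculation once the frames are written down.
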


Details are explained in the course of the proof.

\begin{proof}
According to Peterson~\cite[Theorem~4]{KMP-1866} or
Margulies~\cite[Theorem~4.1]{GM-1961}, we can find parameters $p$, $q$
in such a way that
\begin{gather}\label{eq:parallel}
\ve r'_p = \xi \ve r_p, \qquad
\ve r'_q = \eta \ve r_q.
\end{gather}
To make the exposition self-contained, we give necessary details of the construction
of $p$, $q$.

In an arbitrary parameterisation, we can write
\[
\ve r'_{,j} = s^i_j \ve r_{,i},
\]
where $s^i_j$ is called the mapping tensor.
In consequence of the Gauss--Weingarten equations~\eqref{eq:GW},
the compatibility conditions $\ve r'_{,ik} = \ve r'_{,ki}$ take
the form of the Margulies equations~\cite[equation~(2.6)]{GM-1961}, which is
\begin{gather}\label{eq:Marg:2.6}
s^k_{i;j} = s^k_{j;i}
\end{gather}
(semicolons denote covariant derivatives) and~\cite[equation~(2.7)]{GM-1961},
which is
\begin{gather}\label{eq:Marg:2.7}
s^k_i \II jk = s^k_j \II ik.
\end{gather}
The fundamental forms of $S$, $S'$ are related by
\begin{gather}\label{eq:Marg:ff}
\I ij' = s^k_i s^l_j \I kl,
\qquad
\II ij' = s^k_i \II kj,
\end{gather}
their determinants by
\begin{gather} \label{eq:Marg:ff:det}
\det\I{}{}' = (\det s)^2 \det\I{}{},
\qquad
\det\II{}{}' = \det s \det\II{}{},
\end{gather}
and their Gauss curvatures by
\[
% \numbered \label{eq:Marg:ff:K}
K' = K/{\det s}.
\]
By assumption, $K' = K$. Therefore,
\begin{gather} \label{eq:Marg:det:1}
\det s = 1.
\end{gather}

Now, consider the eigenvalue problem for $s$ in the asymptotic parameterisation of $S$.
Then
$\II 11 = \II 22 = 0$, while $\II 12 \ne 0$, whence $s^1_1 = s^2_2$
by equation~\eqref{eq:Marg:2.7}.
If $s^1_2 s^2_1 = 0$, then either ${\II 11' = 0}$ or $\II 22' = 0$,
contrary to the assumptions.
Therefore, $s^1_2 s^2_1 \ne 0$ and $s$ has two different eigenvalues
\smash{$\xi = s^1_1 + \sqrt{s^1_2 s^2_1}$},
\smash{$\eta = s^1_1 - \sqrt{s^1_2 s^2_1}$}
(not to be confused with $\xi^\pm$, $\eta^\pm$ of the previous section).

Let $X^i$ be an eigenvector corresponding to the eigenvalue $\xi$.
The vector field $X = X^i \partial_i$ satisfies
$X \ve r' = X^j \ve r'_{,j}
 = X^j s^i_j \ve r_{,i}
 = \xi X^i \ve r_{,i}
 = \xi X \ve r$ and similarly for $Y$ and $\eta$.
The two eigenvector directions~$[X]$,~$[Y]$ are different.
Choosing parameters $p$, $q$ in such a way that $[X] = [\partial_p]$,
$[Y] = [\partial_q]$, we obtain equation~\eqref{eq:parallel}.
The mapping tensor becomes
\[
% \numbered\label{eq:mt:pq}
s = \begin{pmatrix} \xi & 0 \\ 0 & \eta \end{pmatrix}.
\]
Formulas~\eqref{eq:Marg:ff} read
\begin{alignat*}{4}
&\I 11' = \xi^2 \I 11,
&&\qquad \I 12' = \xi \eta \I 12,
&&\qquad \I 22' = \eta^2 \I 22,&
\\
&\II 11' = \xi \II 11,
&&\qquad \II 12' = \xi\II 12 = \eta\II 12,
&&\qquad \II 22' = \eta \II 22.&
\end{alignat*}
In particular, $\II12(\xi - \eta) = 0$.
% parallelism by Margulies (I, II).mw
Since $\xi\ne \eta$, we have
\[
\II 12' = \II 12 = 0.
\]
Hence,  the Peterson coordinates are conjugate on $S$ and $S'$,
which is their well-known property.

Since $\det s = 1$ by equation~\eqref{eq:Marg:det:1}, we have
\[
\eta = 1/\xi.
\]
Denoting $\Delta = \detI$, $\Delta' = \detI'$, equation~\eqref{eq:Marg:ff:det}
gives
\[
\Delta' = \Delta.
\]
By assumption, $-\kappa^2 = K = \II 11 \II 22/\Delta$.
Therefore,
\begin{gather}\label{eq:II22}
\II 22 = -\frac{\k^2}{\II 11} \Delta.
\end{gather}

Consider the middle surface $\bar{\ve r}= \frac12(\ve r + \ve r')$ now.
Using equations~\eqref{eq:parallel} with $\eta = 1/\xi$, we obtain % meq (33)
\[
% \numbered\label{eq:r_p,r_q}
\ve r_p' = \frac{1 + \xi}{2} \ve r_p,
\qquad
\ve r_q' = \frac{1 + \xi}{2 \xi}  \ve r_q.
\]
For the first fundamental form, we have
\begin{gather}\label{eq:Ir}
\barI ij = \frac{(1 + \xi)^2}{4\,\xi^{i+j-2}} \I ij,
\qquad
\det\barI{}{} % = \frac{(1 + \xi)^4}{16\,\xi^2} \detI^+
 = \frac{(1 + \xi)^4}{16\,\xi^2} \Delta.
\end{gather}
Note that the metric $\barI{}{}$ is singular at $\xi = -1$.

Since $\bar{\ve r}$, $\ve r'$, $\ve r$ have one and the same normal
vector~$\ve n$, we have
$\barII ij = \frac12(\II ij + \II ij')$, that is,
\begin{gather}\label{eq:IIij}
\barII11 = \frac{1 + \xi}{2} \II 11,
\qquad
\barII12 = 0,
\qquad
\barII22 = \frac{1 + \xi}{2\xi} \II 22
 = -\frac{1 + \xi}{2 \xi} \frac{\k^2}{\II 11} \Delta.
\end{gather}
Thus, the Gaussian curvature of $\bar{\ve r}$ is % meq (40)
\begin{gather}\label{eq:K}
\bar K = \frac{\det\barII{}{}}{\det\barI{}{}}
 = -\frac{4 \k^2 \xi}{(1 + \xi)^2}.
\end{gather}
We see that the sign of $\bar K$ is that of $\xi$.
Moreover, $\xi = -1$ is a true singularity of $\bar S$.

As can be inferred from the results of the previous section,
the concordant Chebyshev net on~$\bar{\ve r}$ we look for is expected to follow
the asymptotic directions on~$\ve r$ and $\ve r'$.
Let they be represented by $X$ and $X'$, respectively.
To find the fields $X$, $X'$,
we look for functions $\zeta(p,q)$, $\zeta'(p,q)$
such that $X = D_p + \zeta D_q$, $X' = D_p + \zeta' D_q$ satisfy
$\II{}{} (X, X) = \II{}{}' (X', X') = 0$.
However, % meq (45), (48)
\begin{gather*}
\II{}{} (X, X)
 = \II11 + \zeta^2 \II22
 = \II 11 - \frac{\k^2 \zeta^2}{\II 11} \Delta, \\
\II{}{}' (X', X')
 = \II11' + \zeta^{\prime2} \II22'
 = \xi \II 11 - \frac{\k^2 \zeta^2}{\xi \II 11} \Delta,
\end{gather*}
whence % meq (51), (52)
\[
\zeta = \epsilon_1 \frac{\II 11}{\k \sqrt{\Delta}}, \qquad
\zeta' = \epsilon_2 \frac{\xi \II 11}{\k \sqrt{\Delta}},
\]
where $\epsilon_1$, $\epsilon_2$ are $\pm1$ independently.
Altogether we obtain four directions
\begin{gather*}
X_1 = D_p + \zeta D_q
 = D_p + \epsilon_1 \frac{\II 11}{\k \sqrt{\Delta}} D_q,
\\
X_2 = D_p + \zeta' D_q
 = D_p + \epsilon_2 \frac{\xi \II 11}{\k \sqrt{\Delta}} D_q.
\end{gather*}
In short, % meq (61)
\[
% \numbered \label{eq:Xrs}
X_i = D_p + \epsilon_i \frac{\xi^{i - 1} \II 11}{\k \sqrt{\Delta}} D_q,
\qquad i = 1,2.
\]
On $\bar{\ve r}$, the directions $[X_i]$ represent the images of
the asymptotic directions on $\ve r$, $\ve r'$ under the parallelism.
Hence, they represent the images of the asymptotic lines
mentioned in the statement of the theorem.

We shall demonstrate two ways to choose the signs $\epsilon_1$ and $\epsilon_2$
so that the net induced on $\bar{\ve r}$ is concordant Chebyshev.
In what follows, geometric objects associated with this net
are marked with tilde.

The first fundamental coefficients are
% meq (65)
\[
\tilI ij  = \barI{}{}(X_{i},X_{j})
 = \barI11
   + \bigl(\epsilon_i \xi^{i-1}
   + \epsilon_j \xi^{j-1}\bigr) \frac{\II 11}{\k\sqrt{\Delta}} \barI12
   + \epsilon_i\epsilon_j \xi^{i+j-2} \frac{(\II 11)^2}{\k^2\Delta} \barI22,
\]
where $\barI ij$ are given by formulas~\eqref{eq:Ir}.
Hence, % meq (69), (68)
\[
\det \widetilde{{\rm I}}
  = \frac{(\epsilon_1 - \epsilon_2 \xi)^2 (\II 11)^2}{\k^2\Delta} \det\barI{}{}
  = \frac{(1 + \xi)^4 (\epsilon_1 - \epsilon_2 \xi)^2}{16 \k^2 \xi^2}
     (\II 11)^2.
\]
Likewise, the second fundamental coefficients are % meq (74)
\begin{align*}
\tilII ij
& = \barII11
   + \bigl(\epsilon_i \xi^{i-1}
   + \epsilon_j \xi^{j-1}\bigr) \frac{\II 11}{\k\sqrt{\Delta}} \barII12
   + \epsilon_i\epsilon_j \xi^{i+j-2} \frac{(\II 11)^2}{\k^2\Delta} \barII22\\
 & = \frac{1 + \xi}{2} \bigl(1 - \epsilon_i\epsilon_j \xi^{i+j-3}\bigr) \II 11
\end{align*}
by virtue of formulas~\eqref{eq:IIij}.
More explicitly,{\samepage
\[
\widetilde{{\rm II}}_{11}
 =  \frac{\xi^2 - 1}{2 \xi} \II 11,
\qquad
\widetilde{{\rm II}}_{12}
 = \frac{1 + \xi}{2} (1 - \epsilon_1\epsilon_2) \II 11,
\qquad
\widetilde{{\rm II}}_{22}
 = \frac{1 - \xi^2}{2} \II 11.
\]
If $\epsilon_1 = \epsilon_2$, then
$\sigma = \widetilde{{\rm II}}_{12}/\!\sqrt{\det \widetilde{{\rm I}}} = 0$,
which rules out the concordant net.}

Continuing with $\epsilon_1 \ne \epsilon_2$, we get
\begin{gather*}
\det \widetilde{{\rm I}}
 = (1 + \xi)^4\,
   \frac{(\epsilon_1 - \epsilon_2 \xi)^2}{16 \k^2 \xi^2} (\II11)^2,
\qquad
\sqrt{\det \widetilde{{\rm I}}}
 = \frac{(1 + \xi)^2}{4}\,
   \left|\frac{\epsilon_1 - \epsilon_2 \xi}{\xi \k}\,\II11\right|,
\\
\widetilde\sigma = \frac{\widetilde{{\rm II}}_{12}}{\sqrt{\det \widetilde{{\rm I}}}}
 = \pm\frac{4 \k \xi}
        {(1 + \xi)^2}
\end{gather*}
according to equation~\eqref{eq:sigma}.
The sign $\pm$ depends on whether $\II11 \gtrless 0$, $\xi \gtrless 0$ and
$\epsilon_1 - \epsilon_2 \xi \gtrless 0$,
being undefined at the singularity $\xi = -1$.
Anyway, we have
\[
\widetilde K \pm \kappa \widetilde\sigma = 0
\]
by comparison with equation~\eqref{eq:K} (obviously, $\widetilde K = \bar K$).
Consequently, we obtain two concordant nets,
one for $\epsilon_1 = 1$, $\epsilon_2 = -1$,
the other one for $\epsilon_1 = -1$, $\epsilon_2 = 1$.
Note also that the sign of~$\widetilde\sigma$ is changeable by more than one
discrete symmetry, see Table~\ref{tab:Ts:inv}.

It remains to be proved that the net has the Chebyshev property,
which can be done by proving that $\widetilde\pi_1 = \widetilde\pi_2 = 0$ or,
equivalently, that $\widetilde\Gamma^1_{12} = \widetilde\Gamma^2_{21} = 0$.
It is a matter of direct verification that the values computed
according to equation~\eqref{eq:Gamma} are zero modulo
certain valid identities we list in the sequel.

Denoting by $\Gamma^i_{jk}(p,q)$ the Christoffel symbols with respect
to the Levi-Civita connection for the metric $\I{}{}$, and by a semicolon the
corresponding covariant derivatives,
the Mainardi--Codazzi equations
$\mathrm{II}_{ij;k} - \mathrm{II}_{ik;j} = 0$
for $\ve r$, cf.~equation~\eqref{eq:GMC}, reduce to
\begin{gather}\label{eq:MC}
\begin{split}
& \mathrm{MC}_1 \equiv \frac{\partial\II 11}{\partial q}
 - \II 11 \Gamma^1_{12} + \II 22 \Gamma^2_{11} = 0,
\\
& \mathrm{MC}_2 \equiv \frac{\partial\II 22}{\partial p}
 + \II 11 \Gamma^1_{22} - \II 22 \Gamma^2_{12} = 0,
 \end{split}
\end{gather}
where $\II 22$ is to be substituted from equation~\eqref{eq:II22}.

The Margulies equations~\eqref{eq:Marg:2.6} reduce to
\begin{gather}\label{eq:Marg}
\begin{split}
& \mathrm{Marg}_1 \equiv \frac{\partial \xi}{\partial q}
 - \frac{1 - \xi^2}{\xi} \Gamma^1_{12} = 0,
\\
& \mathrm{Marg}_2 \equiv -\frac{1}{\xi^2} \frac{\partial \xi}{\partial q}
 + \frac{1 - \xi^2}{\xi} \Gamma^2_{12} = 0.
 \end{split}
\end{gather}
Now it is straightforward to check that
\begin{align*}
\widetilde\Gamma^1_{12} ={}&
 -\frac{\xi}{1 + \xi} \left(
    \frac{\epsilon_1}{\sqrt\Delta} \mathrm{MC}_1 % aa1
   + \frac{\II11}{\kappa\Delta} \mathrm{MC}_2 % bb1
   + \frac{\epsilon_1^2 - 1}{\epsilon_1}\, % ee1
     \frac{\kappa^2 \sqrt\Delta}{\II11} \Gamma^2_{11}
   + \bigl(\epsilon_1^2 - 1\bigr) \frac{(\II11)^2}{\kappa\Delta} \Gamma^1_{22} % ff1
 \right),
\\
\widetilde\Gamma^2_{21}   ={}&
 \frac{\xi}{1 + \xi} \left(
   \frac{\epsilon_1}{\sqrt\Delta} \mathrm{MC}_1 % aa2
    - \k\,\mathrm{Marg}_2 % dd2
    + \frac{\epsilon_1^2 - 1}{\epsilon_1}
      \frac{\kappa^2 \sqrt\Delta}{\II11} \Gamma^2_{11} % ff2
 \right)
\\
 & - \frac{1}{1 + \xi} \left(
    \frac{\II11}{\k \Delta} \mathrm{MC}_2 % bb2
     + \epsilon_1 \frac{\II11}{\sqrt\Delta} \mathrm{Marg}_1 % cc2
     + \frac{\epsilon_1^2 - 1}{\epsilon_1}\,
        \kappa\Delta (\II11)^2 \Gamma^1_{22} \right)
\end{align*}
vanish in consequence of equations~\eqref{eq:MC} and \eqref{eq:Marg} and
$\epsilon_1 = \pm1$.
This finishes the proof of Theorem~\ref{thm:2ps->ccn}.
\end{proof}

Theorem~\ref{thm:2ps->ccn} provides a geometric solution to problem (B).
In principle, this geometric solution can be turned into an analytic
solution of system~\eqref{GMC} and~\eqref{eq:h22} in implicit form,
but the result is too complex to be of any use.

It is worth mentioning that this construction yields
Chebyshev nets, but not Chebyshev parameterisations in the sense of
Proposition~\ref{prop:Chebyshev}\,(i), which underlines the
importance of distinguishing between the two concepts.

\begin{Corollary}
The class of surfaces admitting a concordant Chebyshev net
coincides with the class of middle surfaces of pairs of
pseudospherical surfaces under the correspondence by equal normals.
\end{Corollary}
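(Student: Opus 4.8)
The plan is to read off the two inclusions of the claimed equality from the two theorems already established, one inclusion per theorem.

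For the inclusion from concordant-Chebyshev surfaces to middle surfaces, I would start with a surface carrying a concordant Chebyshev net, parameterised by $\ve r$ with $K = \k\sigma$. Theorem~\ref{thm:ccn->2ps} then furnishes the associated surfaces $\ve r^\pm = \ve r \pm \ve m/\k$ of~\eqref{eq:assoc}, which are pseudospherical of the common Gauss curvature $-\k^2$ and, by part~(iii), share the normal field $\ve n$ with $\ve r$. Coincidence of normals at corresponding points is exactly the parallelism of diagram~\eqref{eq:diag}, so $(\ve r^+, \ve r^-)$ is a parallel pair of pseudospherical surfaces of equal negative constant Gauss curvature. Because $\ve r = \tfrac12(\ve r^+ + \ve r^-)$ by construction, the original surface is precisely the middle surface of this pair; hence it lies in the second class.

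The reverse inclusion is immediate: Theorem~\ref{thm:2ps->ccn} says that the middle surface of any parallel pair of pseudospherical surfaces of equal negative constant Gauss curvature carries a concordant Chebyshev net (in fact two of them). Thus every surface of the second class belongs to the first, and the two inclusions together give the equality of classes.

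The one point that needs care --- and the likely main obstacle --- is the bookkeeping of the nondegeneracy hypotheses under which each direction holds. Theorem~\ref{thm:ccn->2ps} produces regular associated surfaces only where $\sigma \ne 0$ and $\sin\omega \ne 0$, whereas Theorem~\ref{thm:2ps->ccn} excludes both the coincidence of asymptotic images (the case $\epsilon_1 = \epsilon_2$, which forces $\sigma = 0$ and is discarded in its proof) and the genuine singularity $\xi = -1$ of the middle surface. In keeping with the local, generic viewpoint of the paper, I would state the corollary as an identification of the two classes on their common regular locus and verify that the excluded loci correspond: the discarded $\sigma = 0$ case on the concordant-net side matches $\epsilon_1 = \epsilon_2$, and the breakdown $\sin\omega = 0$ together with $\xi = -1$ marks the singular locus of the middle surface in both pictures. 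Granting this matching, the corollary follows by combining the two inclusions.
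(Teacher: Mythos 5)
Your proposal is correct and is exactly the argument the paper intends: the corollary is stated without proof as the immediate combination of Theorem~\ref{thm:ccn->2ps} (using $\ve r = \tfrac12(\ve r^+ + \ve r^-)$ and the shared normal, which is the parallelism) for one inclusion and Theorem~\ref{thm:2ps->ccn} for the other. Your additional bookkeeping of the nondegeneracy loci is a sensible refinement consistent with the paper's local, generic viewpoint.
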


At the end of Section~\ref{sect:ccnets->PS}, we observed that
every concordant net induces a pair of Chebyshev nets on the
unit sphere; the explicit description was given in
Proposition~\ref{prop:Chpair}.
The following proposition provides a version of Theorem~\ref{thm:2ps->ccn} starting with two
Chebyshev nets on the sphere.

\begin{Corollary}
Consider the unit sphere $\| \ve n \| = 1$ carrying two Chebyshev nets given by
directions $[X^\pm_1]$, $[X^\pm_2]$, where $(X^\pm_1, X^\pm_2)$ are two pairs
of commuting unit vector fields.
Then we can choose the signs in such a way that both $X^+_1$, $X^-_2$ and $X^-_1$, $X^+_2$
represent concordant Chebyshev nets on the surface
$\ve r = \frac12 \ve r^+ + \frac12 \ve r^-$, where  surfaces $\ve r^\pm$ are determined
by the Lelieuvre formulas
\[
% \numbered\label{eq:Lelieuvre:Chpair}
X_1^\pm \ve r^\pm
 = -\frac{1}{\k} X_1^\pm \ve n \times \ve n,
\qquad
X_2^\pm \ve r^\pm
 = \frac{1}{\k} X_2^\pm \ve n \times \ve n
\]
and correspond by the parallelism of normals.
\end{Corollary}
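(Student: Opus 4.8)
The plan is to read this statement as Theorem~\ref{thm:2ps->ccn} run through the Lelieuvre correspondence of Section~\ref{sect:ccnets->PS}, which attaches to every Chebyshev net on the unit sphere a pseudospherical surface of curvature $-\k^2$ carrying that net as its asymptotic Chebyshev net. Granting this correspondence, the two spherical nets produce two pseudospherical surfaces $\ve r^+$, $\ve r^-$ built over the same Gauss image $\ve n$; they therefore correspond by parallelism of normals, with middle surface $\ve r = \tfrac12\ve r^+ + \tfrac12\ve r^-$, and Theorem~\ref{thm:2ps->ccn} supplies the two concordant Chebyshev nets directly. The only genuine computation is to verify that the Lelieuvre formulas in the statement do reconstruct such surfaces.

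First I would take the commuting unit representatives of the net $(X_1^+, X_2^+)$ as coordinate fields $\partial_u = X_1^+$, $\partial_v = X_2^+$, so that on the sphere $\ve n_u\cdot\ve n_u = \ve n_v\cdot\ve n_v = 1$; this is the Chebyshev property in the form of Proposition~\ref{prop:Chebyshev}(iii). Cross-differentiating the Lelieuvre system $\ve r^+_u = -\tfrac1\k\,\ve n_u\times\ve n$, $\ve r^+_v = \tfrac1\k\,\ve n_v\times\ve n$, compatibility collapses to the Moutard equation $\ve n_{uv}\parallel\ve n$. This holds automatically here, since $\ve n_{uv}\cdot\ve n_u = \tfrac12\partial_v(\ve n_u\cdot\ve n_u) = 0$ and likewise $\ve n_{uv}\cdot\ve n_v = 0$, so $\ve n_{uv}$ has no tangential part in the frame $\{\ve n_u,\ve n_v,\ve n\}$. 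Hence $\ve r^+$ exists.

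Next I would read off the fundamental forms of $\ve r^+$. From the identity $(\ve n_u\times\ve n)\times(\ve n_v\times\ve n) = [\ve n_u,\ve n_v,\ve n]\,\ve n$ one gets $\ve r^+_u\times\ve r^+_v\parallel\ve n$, so $\ve n$ is the common unit normal up to orientation, which is the parallelism of normals. Since $\ve r^+_{uu} = -\tfrac1\k\,\ve n_{uu}\times\ve n$ is orthogonal to $\ve n$, and likewise $\ve r^+_{vv}$, we have $\mathrm{II}^+_{11} = \mathrm{II}^+_{22} = 0$, identifying $[\partial_u] = [X_1^+]$ and $[\partial_v] = [X_2^+]$ as the asymptotic directions of $\ve r^+$; meanwhile $\mathrm{I}^+_{11} = \tfrac1{\k^2}|\ve n_u\times\ve n|^2 = \tfrac1{\k^2} = \mathrm{I}^+_{22}$ is constant, so this asymptotic net is Chebyshev. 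A one-line determinant computation then gives $K^+ = \det\mathrm{II}^+/\det\mathrm{I}^+ = -\k^2$, so $\ve r^+$ is pseudospherical of curvature $-\k^2$; the identical argument applied to $(X_1^-, X_2^-)$ yields $\ve r^-$, again with normal $\ve n$.

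Finally I would invoke Theorem~\ref{thm:2ps->ccn} with $S = \ve r^+$, $S' = \ve r^-$ and the parallel parameterisation furnished by the common Gauss map $\ve n$: both being pseudospherical of the same curvature $-\k^2$, the images of their asymptotic families combine into two concordant Chebyshev nets on $\ve r$, one asymptotic direction being taken from each surface. Matching labels with the $\epsilon_1 = \pm1$, $\epsilon_2 = \pm1$ parameterisation of the asymptotic directions in the proof of Theorem~\ref{thm:2ps->ccn}, the admissible combinations (those with $\epsilon_1\ne\epsilon_2$, the others giving $\widetilde\sigma = 0$) are precisely the crossed pairs $(X_1^+, X_2^-)$ and $(X_1^-, X_2^+)$. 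The main obstacle is this sign bookkeeping, together with verifying the genericity hypothesis of Theorem~\ref{thm:2ps->ccn} that no asymptotic direction of $\ve r^+$ is carried to an asymptotic direction of $\ve r^-$; on the sphere this is exactly the requirement that the two Chebyshev nets share no common direction, which is the generic situation.
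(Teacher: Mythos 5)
Your proposal is correct and follows exactly the route the paper intends: the paper's own proof of this corollary is literally ``Obvious'' plus the notational dictionary $\ve r^+,\ve r^-,\ve r \leftrightarrow \ve r,\ve r',\bar{\ve r}$, i.e., it is read off from Theorem~\ref{thm:2ps->ccn} once the Lelieuvre formulas are known to produce pseudospherical surfaces of curvature $-\k^2$ with common Gauss map $\ve n$. You have simply supplied the verification the paper leaves implicit (Moutard compatibility via the unit commuting representatives, the computation of ${\rm I}^\pm$, ${\rm II}^\pm$ and $K^\pm=-\k^2$, and the identification of the crossed pairs with $\epsilon_1\ne\epsilon_2$), and all of those steps check out.
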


\begin{proof}
Obvious.
Note that $\ve r^+$, $\ve r^-$, $\ve r$ correspond to $\ve r$, $\ve r'$, $\bar{\ve r}$, respectively.
\end{proof}

\section{Examples}
\label{sect:ex}

In this section, we discuss explicit examples based on
Theorem~\ref{thm:2ps->ccn}.
We switch back to the notation of Section~\ref{sect:ccnets->PS},
cf.\ Table~\ref{tab:sect:sect}.
In particular, $\ve r^+$, $\ve r^-$, $\ve r$ of this section are $\ve r$, $\ve r'$, $\bar{\ve r}$
of Section~\ref{sect:PS->ccnets}.
For the reader's convenience, we review the construction.

\begin{construction}
%\label{construct}
The input is a pair of pseudospherical surfaces $P^+$ and $P^-$.
\begin{enumerate}\itemsep=0pt\samepage
\item Relate $P^+$ and $P^-$ by parallelism, i.e., choose parameters $p$, $q$ so that
$\ve n^{+}(p,q) = \ve n^{-}(p,q)$.

\item Compute the middle surface
$\ve r(p,q) = \frac12 \ve r^+(p,q) + \frac12 \ve r^-(p,q)$.

\item Find the asymptotic lines on $P^+$ and $P^-$, altogether four line families.

\item Find the corresponding four line families on the middle surface.

\item Select the two pairs that form the two concordant Chebyshev nets sought.
\end{enumerate}
\end{construction}

\begin{Example}
%\label{ex:2ps}
Consider two pseudospheres $\ve r^{+}$ and $\ve r^{-}$ with
perpendicular axes parallel to the $x$- and $y$-axis, respectively.
In isodiagonal parameterisations, see Remark~\ref{rem:bisect}, we have
\begin{gather} \label{eq:psps:rr}
\begin{split}
& \ve r^{+} =
\left[v^+ - \tanh v^+,
\frac{\cos u^+}{\cosh v^+},
\frac{\sin u^+}{\cosh v^+}\right],
\\
& \ve r^{-} =
\left[\frac{\cos u^-}{\cosh v^-},
v^- - \tanh v^-,
\frac{\sin u^-}{\cosh v^-}\right],
\end{split}
\end{gather}
assuming $u^\pm \in \mathbb S^1$ and $v^\pm \in \mathbb R$.

The Gauss maps are almost bijective if using the outward (or inward) normals.
Figure~\ref{fig:rainbow} is coloured in such a way that the
Gauss mapping of the pseudosphere
(which is also a parallelism between the pseudosphere and the sphere)
is colour-preserving.
\begin{figure}[ht]\centering

\includegraphics[scale = 0.28, angle = 90]{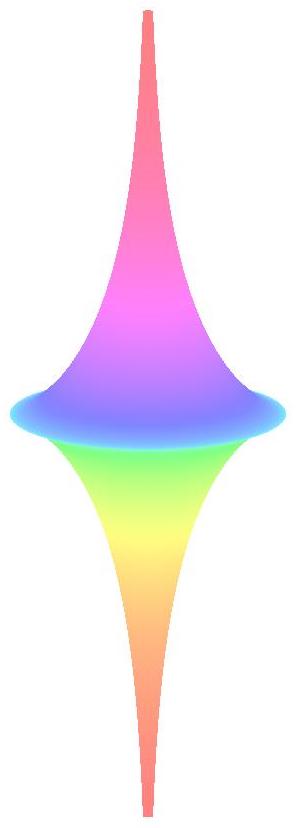}
\qquad
  \includegraphics[scale = 0.14, angle = 90]{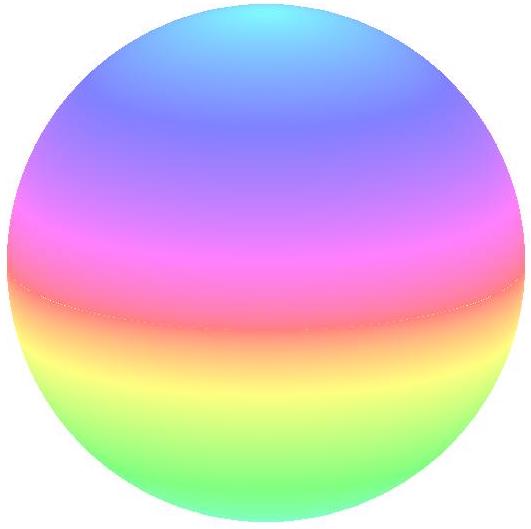}

\caption{Colour visualisation of the Gauss map by outward normals.}
\label{fig:rainbow}
\end{figure}

The coordinate formulas are
\begin{gather} \label{eq:psps:nn}
\begin{split}
& \ve n^{+} = \sign v^+
\left[\frac{1}{\cosh v^+},
\tanh v^+ \cos u^+,
\tanh v^+ \sin u^+\right],
\\
& \ve n^{-} =
\sign v^- \left[\tanh v^- \cos u^-,
\frac{1}{\cosh v^-},
\tanh v^- \sin u^-\right],
\end{split}
\end{gather}
where $\sign v^\pm$ ensure that the normals are outward.

To perform Step 1, we relate parameters $u^\pm$, $v^\pm$
by $\ve n^{+}(u^+,v^+) = \ve n^{-}(u^-,v^-)$.
This can be done in various ways.
Denoting by $R^\pm_i$ and $N_i$ the components of
$\ve r^\pm$ and $\ve n = \ve n^\pm$, respectively, the inverse Gauss maps
$(\gamma^\pm)^{-1}$ are
\begin{gather*}
% \numbered \label{eq:psps:n2ps}
R^+_1 = \sign N_1 \left(\operatorname{arcosh} \left|\frac1{N_1}\right| - \sqrt{1 - N_1^2}\right), \qquad
R^+_i = \frac{|N_1|\, N_i}{\sqrt{1 - N_1^2}}, \qquad i = 2,3,
\\
R^-_2 = \sign N_2 \left(\operatorname{arcosh}\left|\frac1{N_2}\right| - \sqrt{1 - N_2^2}\right), \qquad
R^-_i = \frac{N_i \,|N_2|}{\sqrt{1 - N_2^2}}, \qquad i = 1,3,
\end{gather*}
assuming $N_1^2 + N_2^2 + N_3^2 = 1$.
Substituting
\begin{gather*}
N_1 = \cos\phi \cos\theta, \qquad
N_2 = \sin\phi \cos\theta, \qquad
N_3 = \sin\theta, \qquad
-\tfrac12\pi < \theta < \tfrac12\pi,  \quad
-\pi < \phi < \pi,
\end{gather*}
we get $\ve r^\pm(\phi,\theta)$ in spherical coordinates on the Gauss sphere.
Thus,{\samepage
\begin{align*}
\ve r^+(\phi,\theta) = \biggl[ &\pm
\operatorname{arcosh}\left|\frac1{\cos\phi\,\cos\theta}\right|
 \mp \sqrt{1 - \cos^2\phi\,\cos^2\theta},
\frac{\left|\cos\phi\right|\sin\phi\,\cos^2\theta}
  {\sqrt{1 - \cos^2\phi\,\cos^2\theta}},
\\
& \frac{\left|\cos\phi\right|\sin\theta\,\cos\theta}
  {\sqrt{1 - \cos^2\phi\,\cos^2\theta}}
\biggr],
\\
\ve r^-(\phi,\theta) = \biggl[& \frac{\left|\sin\phi\right|\cos\phi\cos^2\theta}
  {\sqrt{1 - \sin^2\phi\,\cos^2\theta}},
\pm\operatorname{arcosh}\left|\frac1{\cos\phi\,\cos\theta}\right|
 \mp \sqrt{1 - \sin^2\phi\,\cos^2\theta},
\\
& \frac{\left|\sin\phi\right|\sin\theta\,\cos\theta}
  {\sqrt{1 - \sin^2\phi\,\cos^2\theta}}
\biggr],
\end{align*}
where $\pm = \sign (\cos\phi)$.}

To perform Step 2, we compute
\begin{gather}\label{eq:2ps:mid}
\ve r(\phi,\theta) = \tfrac12 \ve r^+(\phi,\theta) + \tfrac12 \ve r^-(\phi,\theta).
\end{gather}
This is the middle surface, a snippet of which is displayed in Figure~\ref{fig:mid}
(blue for $0 < \theta < \frac12 \pi$,
yellow for $-\frac12 \pi < \theta < 0$), $0 < \phi < \frac12 \pi$,
restricted to $x < 2$, $y < 2$.
The whole middle surface has four connected components, obtainable by
rotating one of them by $\frac12 \pi$, $\pi$, $\frac32 \pi$ around the $z$-axis.
All parts extend to infinity along the $x$- and $y$-axis
(here $x$, $y$, $z$ refer to coordinates in Euclidean space).
\begin{figure}[ht]
\centering
\includegraphics[scale = 0.33]{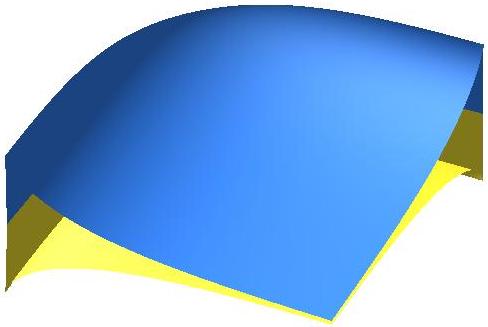}
\caption{A snippet of the middle surface of two pseudospheres.}\label{fig:mid}
\end{figure}

The middle surface is regular except eight cuspidal edges,
two of which are clearly seen in Figure~\ref{fig:mid}.
Their Gauss image consists of four adjacent ovals,
formed by zeroes of certain
polynomial $\Pi(\cos\phi, \cos\theta)$, which is too large to be printed.\footnote{The ovals $\Pi(\cos\phi, \cos\theta) = 0$
are miraculously well approximated
by the ellipses
$\phi = \frac14 \pi(2 k - 1 + \cos t)$, $k = 1,\dots,4$,
$\theta = \arccos\sqrt{2 - \sqrt 2} \cdot \sin t$
in the $\phi,\theta$-plane.}
The Gauss images of cuspidal edges are drawn in white in
Figure~\ref{fig:rainbow:sing}
(blue hemisphere for $\theta > 0$, yellow for $\theta < 0$).
The Gauss curvature of $R(\phi,\theta)$ is
negative for $\phi$, $\theta$ inside the ovals and
positive for $\phi$, $\theta$ outside the ovals
(compare Figures~\ref{fig:mid} and~\ref{fig:rainbow:sing}).

\begin{figure}[ht]
\centering
  \includegraphics[scale = 0.17]{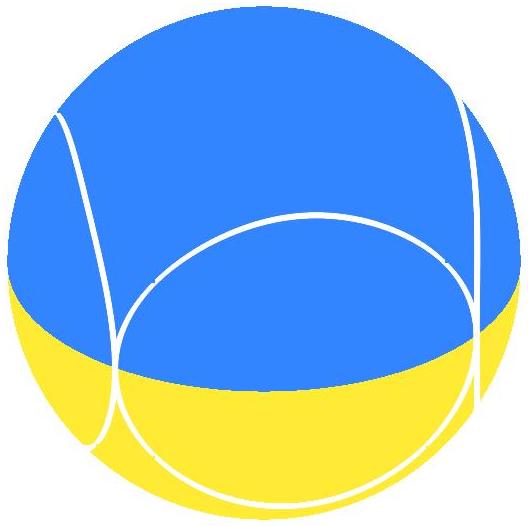}
\caption{Gaussian images of the cuspidal edges.}
\label{fig:rainbow:sing}
\end{figure}

Summarising, points~\eqref{eq:2ps:mid}
fill the middle surface and are regular
if $\Pi(\cos\phi, \cos\theta) \ne 0$.
Figure~\ref{fig:triples} visualises the middle points
for $\phi$, $\theta$ in different positions relative to the ovals.
From left to right, the curvature in $R(\phi,\theta)$ is
negative, singular (cuspidal edge) and positive, respectively.
The colours indicate individual surfaces
(pseudospheres are red and blue, the middle surface is yellow).
Short sticks represent outward normals.
\begin{figure}[ht]
\centering
  \includegraphics[scale = 0.282]{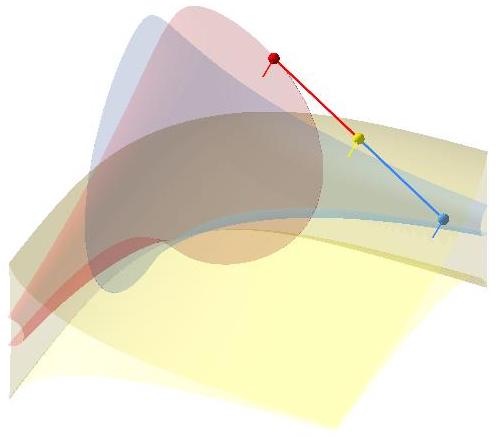}
\quad
  \includegraphics[scale = 0.282]{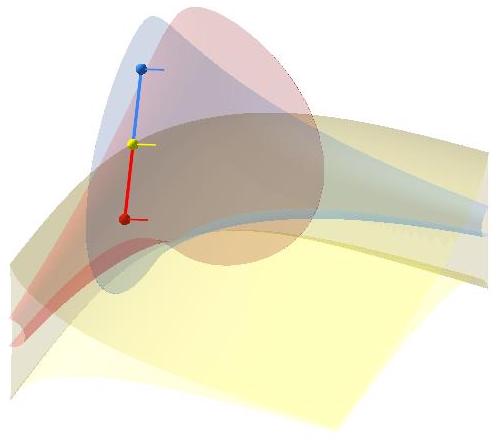}
\quad
  \includegraphics[scale = 0.282]{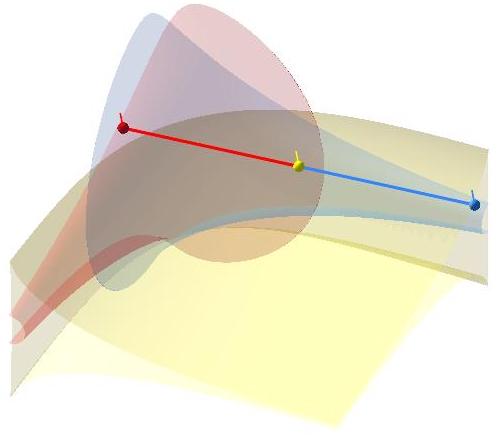}
\caption{Various positions of $R(\phi,\theta)$.}
\label{fig:triples}
\end{figure}

In Step 3, we equip the two pseudospheres with their asymptotic
Chebyshev parameterisations~$x^\pm$,~$y^\pm$.
These can be found by substituting~$u^\pm = x^\pm + y^\pm$,~$v^\pm = x^\pm - y^\pm$
into~\eqref{eq:psps:rr} since~$u^\pm$,~$v^\pm$ are isogonal on the pseudospheres~$\ve r^\pm$.
We get
\begin{gather*}
\ve r^{+}(x^+, y^+) =
\left[x^+ - y^+ - \tanh (x^+ - y^+),
\frac{\cos (x^+ + y^+)}{\cosh (x^+ - y^+)},
\frac{\sin (x^+ + y^+)}{\cosh (x^+ - y^+)}\right],
\\
\ve r^{-}(x^-, y^-) =
\left[\frac{\cos (x^- + y^-)}{\cosh (x^- - y^-)},
x^- - y^- - \tanh (x^- - y^-),
\frac{\sin (x^- + y^-)}{\cosh (x^- - y^-)}\right].
\end{gather*}
Figure~\ref{fig:ps:net} shows the result.
\begin{figure}[ht]
\centering
  \includegraphics[scale = 0.4, angle = 0]{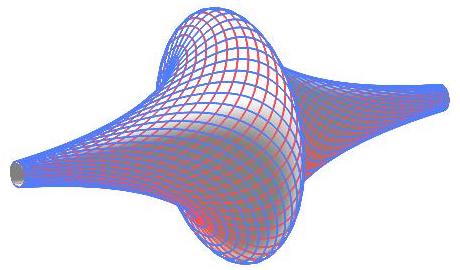}
\quad
  \includegraphics[scale = 0.4, angle = 0]{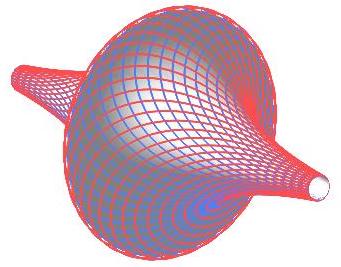}
\caption{Asymptotic Chebyshev nets on the parent pseudospheres.}
\label{fig:ps:net}
\end{figure}

In Step 4, we construct the corresponding lines on the middle surface.
We first substitute
$u^\pm = x^\pm + y^\pm$, $v^\pm = x^\pm - y^\pm$
into~\eqref{eq:psps:nn} to get the corresponding Chebyshev nets on the
Gaussian spheres, obtaining
\begin{gather*} % mw (42)
\ve n^{+} =
\left[\frac{1}{\cosh (x^+ - y^+)},
\tanh (x^+ - y^+) \cos (x^+ + y^+),
\tanh (x^+ - y^+) \sin (x^+ + y^+)\right],
\\
\ve n^{-} =
\left[\tanh (x^- - y^-) \cos (x^- + y^-),
\frac{1}{\cosh (x^- - y^-)},
\tanh (x^- - y^-) \sin (x^- + y^-)\right].
\end{gather*}
Denoting by $N_1$, $N_2$, $N_3$ individual components of vectors
$\ve n^{+}(x^+,y^+)$ and $\ve n^{-}(x^-,y^-)$, the map
\begin{align*}
R = \frac12
\biggl[ &\frac{N_1 N_2}{\sqrt{1 - N_2^2}} + \operatorname{arcosh}\left(\frac1{N_1}\right) - \sqrt{1 - N_1^2},
\\
& \frac{N_1 N_2}{\sqrt{1 - N_1^2}} + \operatorname{arcosh}\left(\frac1{N_2}\right) - \sqrt{1 - N_2^2},
\frac{N_1 N_3}{\sqrt{1 - N_1^2}} + \frac{N_2 N_3}{\sqrt{1 - N_2^2}}
\biggr]
\end{align*}
allows us to obtain explicitly four line families on the middle surface.

In Step 5, we choose appropriate pairs that are guaranteed to form concordant
Chebyshev nets by Theorem~\ref{thm:2ps->ccn}.
Figure~\ref{fig:mid:net} shows the results in the straight and overturned view.
Thus, the resulting nets are composed of curves
$x^\pm = {\rm const}$ and $y^\pm = {\rm const}$
corresponding to equally coloured asymptotic curves in Figure~\ref{fig:ps:net}.
They approximate a Chebyshev parameterisation quite well,
but actually they only satisfy the curvilinear parallelogram condition,
see Section~\ref{sect:Chebyshev}.
The two nets are different, but identifiable by the mirror symmetry.

\begin{figure}[ht]\centering
  \includegraphics[scale = 0.2]{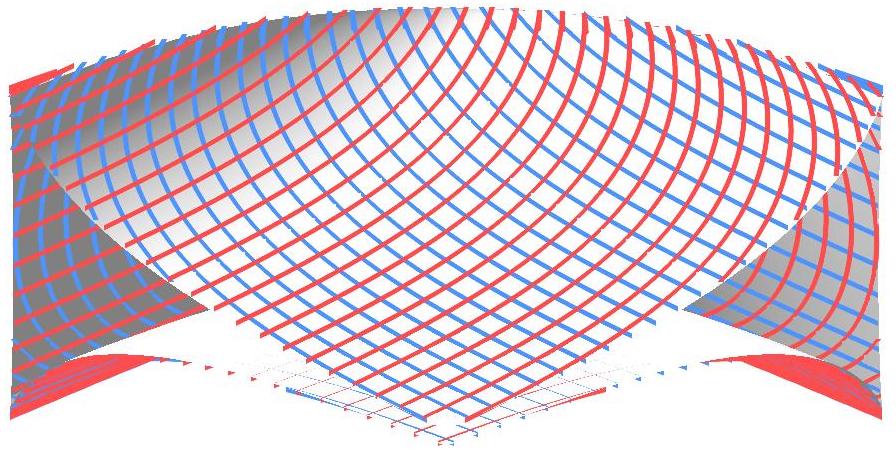}
\quad
  \includegraphics[scale = 0.2]{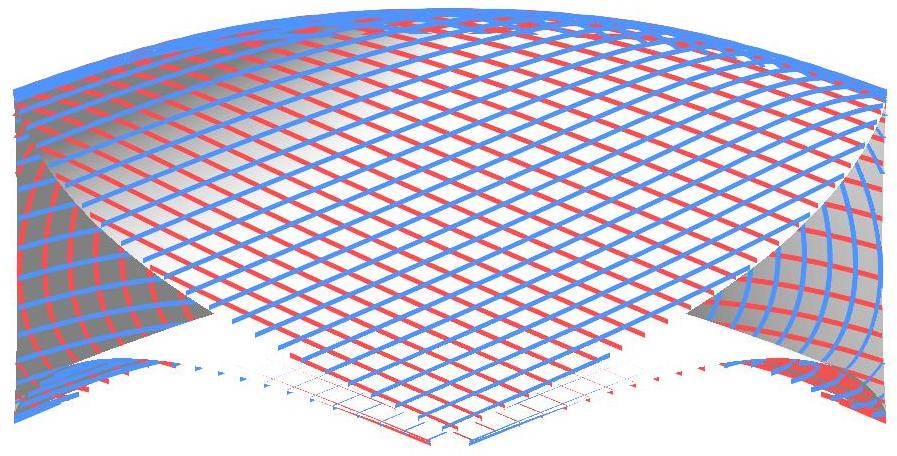}

  \vspace{2mm}

  \includegraphics[scale = 0.2]{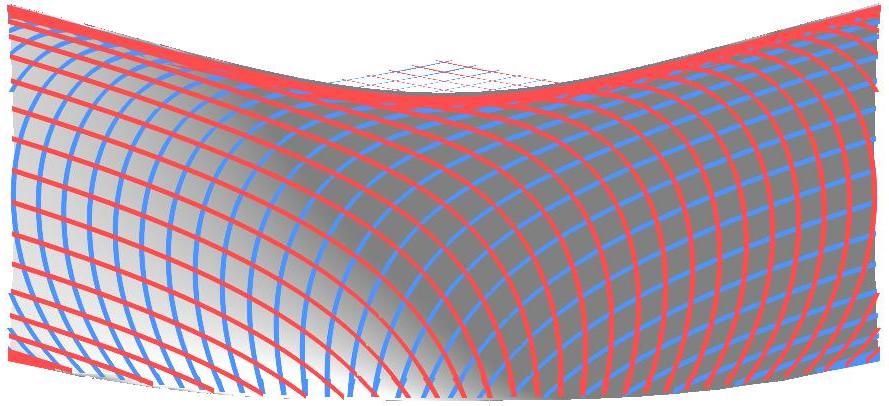}
\quad
  \includegraphics[scale = 0.2]{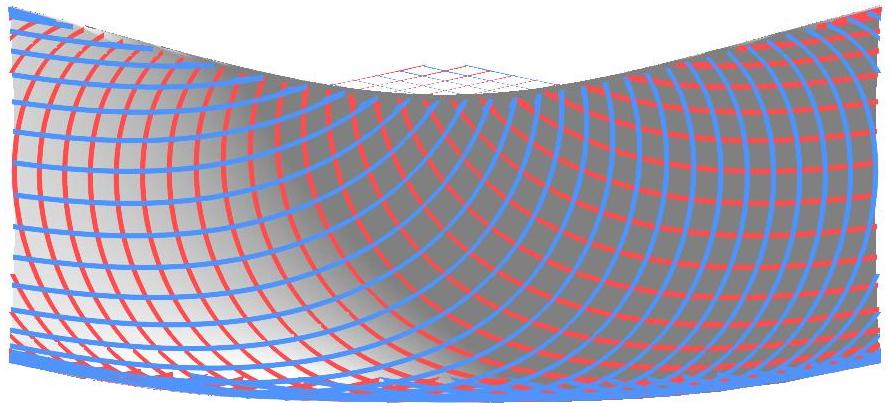}
\caption{Different concordant Chebyshev nets on the middle surface.}
\label{fig:mid:net}
\end{figure}
\end{Example}

\begin{Example}
\label{ex:hat}
Here we choose $\ve r^+$ to be the pseudosphere and $\ve r^-$ to be
one period of a coaxial pseudospherical surface of revolution of elliptic
type~\cite[Section~103]{LB-I}.
Positioning the common axis in the $z$-direction, we can write
\begin{gather*}
\ve r^{+} =
\biggl[\frac{\cos u^+}{\cosh v^+},
 \frac{\sin u^+}{\cosh v^+},
 v^+ - \tanh v^+\biggr],
\\
\ve r^{-} =\biggl[\sn\bigl(v^- \cos k \mid -{\tan^2 k}\bigr) \cos u^-,
 \sn\bigl(v^- \cos k \mid -{\tan^2 k}\bigr) \sin u^-,
\\
\hphantom{\ve r^{-} =\biggl[}
\frac{v^- - \E\bigl(\sn\bigl(v^- \cos k \mid -{\tan^2 k}\bigr) \mid -{\tan^2 k}\bigr) \cos k}
     {\sin k}\biggr]
\end{gather*}
in the isodiagonal parameterisation.
Here $\sn$ is the elliptic sine and $E$ is the elliptic integral of the
second kind, i.e.,
\[
\sn(\phi | m) = \sin \am(\phi | m),
\qquad
\E (s | m) = \int_0^s \sqrt{1 - m \sin^2 t} \,{\rm d}t.
\]
The elliptic amplitude $\am(\phi | m)$ is the inverse of the elliptic
integral of the first kind, that is, the value $s$ such that{\samepage
\[
\phi = \F (s | m) = \int_0^s \frac{{\rm d}t}{\sqrt{1 - m \sin^2 t}}.
\]
While $u^\pm \in \mathbb S^1$, the range of $v^\pm$ will be determined later.}

If using the outward normals, the Gauss image of the latter consists
of two spherical caps, see Figure~\ref{fig:rainbow_cap}. In particular, the Gauss map is not surjective.

\begin{figure}[ht]
\centering
\includegraphics[scale = 0.24, angle = 90]{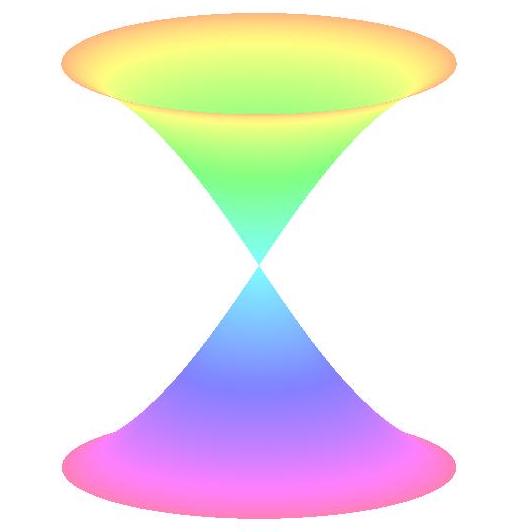}
\qquad
  \includegraphics[scale = 0.40, angle = 90]{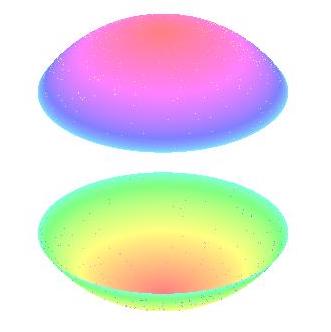}
\caption{Colour visualisation of the Gauss map in Example~\ref{ex:hat}.}
\label{fig:rainbow_cap}
\end{figure}

To perform Step~1, we need formulas for the unit normals
(the Gauss maps to $\mathbb S^2$), which are
\begin{gather*}
\ve n^{+} =
\biggl[
\tanh v^+ \cos u^+,
\tanh v^+ \sin u^+,
\frac{1}{\cosh v^+}\biggr],
\\
\ve n^{-} =
-\bigl[\sin k \cn\bigl(v^- \cos k \mid -{\tan^2 k}\bigr) \cos u^-,
  \sin k \cn\bigl(v^- \cos k \mid -{\tan^2 k}\bigr) \sin u^-,
\\
\hphantom{\ve n^{-} =-\bigl[}
  \cos k \dn \bigl(v^- \cos k \mid -{\tan^2 k}\bigr)
\bigr],
\end{gather*}
where $\dn(x | m) = \partial \am(x | m)/\partial x$.
The normals point outwards if $v^+ \in [-\operatorname{artanh}(\sin k), 0]$ and
$v^- \in [0, K(\sin^2 k)]$,
where $K(m) = F(1|m)$ is the complete elliptic integral of the first kind.
This choice covers the downward pointing cap of the elliptic pseudospherical
surface of revolution and a nozzle-shaped section of the downward pointing half
of the pseudosphere if the $z$-axis is considered vertical,
see the two outer surfaces in Figure~\ref{fig:threecoaxial}.

\begin{figure}[ht]\centering
\includegraphics[scale = 0.44, angle = 0]{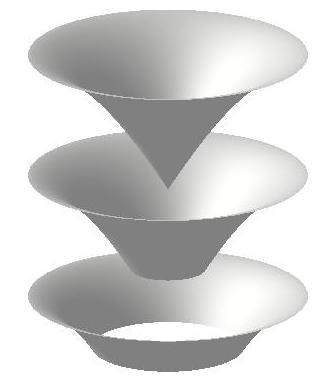}
\caption{From top to down, the three coaxial surfaces
$\ve r^-$, $\ve r$, $\ve r^+$ (separated for visibility).}
\label{fig:threecoaxial}
\end{figure}

To obtain the parallelism, we consider the equality
$\ve n^{+}(u^+,v^+) = \ve n^{-}(u^-,v^-)$, which reduces~to
\begin{gather}\label{eq:ex2:paral:impl}
u^+ = u^-,
\qquad
\sin k \cn\bigl(v^- \cos k \mid -{\tan^2 k}\bigr) + \tanh v^+ = 0.
\end{gather}
The latter equation can be solved for $v^+$ or $v^-$, giving either
\[
% \label{eq:ex2:paral}
u^- = u^+, \qquad
v^- = \frac1{\cos k} \arccn\left(-\frac{\tanh v^+}{\sin k} \,\biggl|\, -{\tan^2 k}\right),
\]
where $v^+ \in [-\operatorname{artanh}(\sin k), 0]$,
or
\[
% \label{eq:ex2:para2}
u^+ = u^-, \qquad
v^+ = \operatorname{arcosh}\left(\frac{1}{\dn \bigl(v^- \cos k \mid -{\tan^2 k}\bigr) \cos k}\right),
\]
where $v^- \in \bigl[0, K\bigl(\sin^2 k\bigr)\bigr]$.
With the help of these we can switch from the parameterisation by~$u^+$,~$v^+$ to the parameterisation by~$u^-$,~$v^-$ and vice versa.

In Step 2, we compute the middle surface.
We  display only the picture, see Figure~\ref{fig:threecoaxial},
suppressing the complicated formulas.

The Gauss curvature of the middle surface is
$-2 \sin^2 k/\bigl(1 + \sin^2 k\bigr)$ at the rim $v^+ = 0$
and tends to zero at the aperture $v^+ = -\operatorname{artanh}(\sin k)$.
Thus, although hyperbolic, the middle surface is not pseudospherical.

In Step 3, we have to find the asymptotic Chebyshev parameterisations of
the initial surfaces~$\ve r^+$,~$\ve r^-$.
As in the previous example, we only have to substitute
$u^\pm = x^\pm + y^\pm$, $v^\pm = x^\pm - y^\pm$ into the above formulas
for $\ve r^+(u^+,v^+)$, $\ve r^-(u^-,v^-)$.
The asymptotic Chebyshev net on $\ve r^+$ has been visualised above in
Figure~\ref {fig:ps:net},
for $\ve r^-$ see Figure~\ref{fig:pshatnet}.
\begin{figure}[ht]\centering
\includegraphics[scale = 0.065]{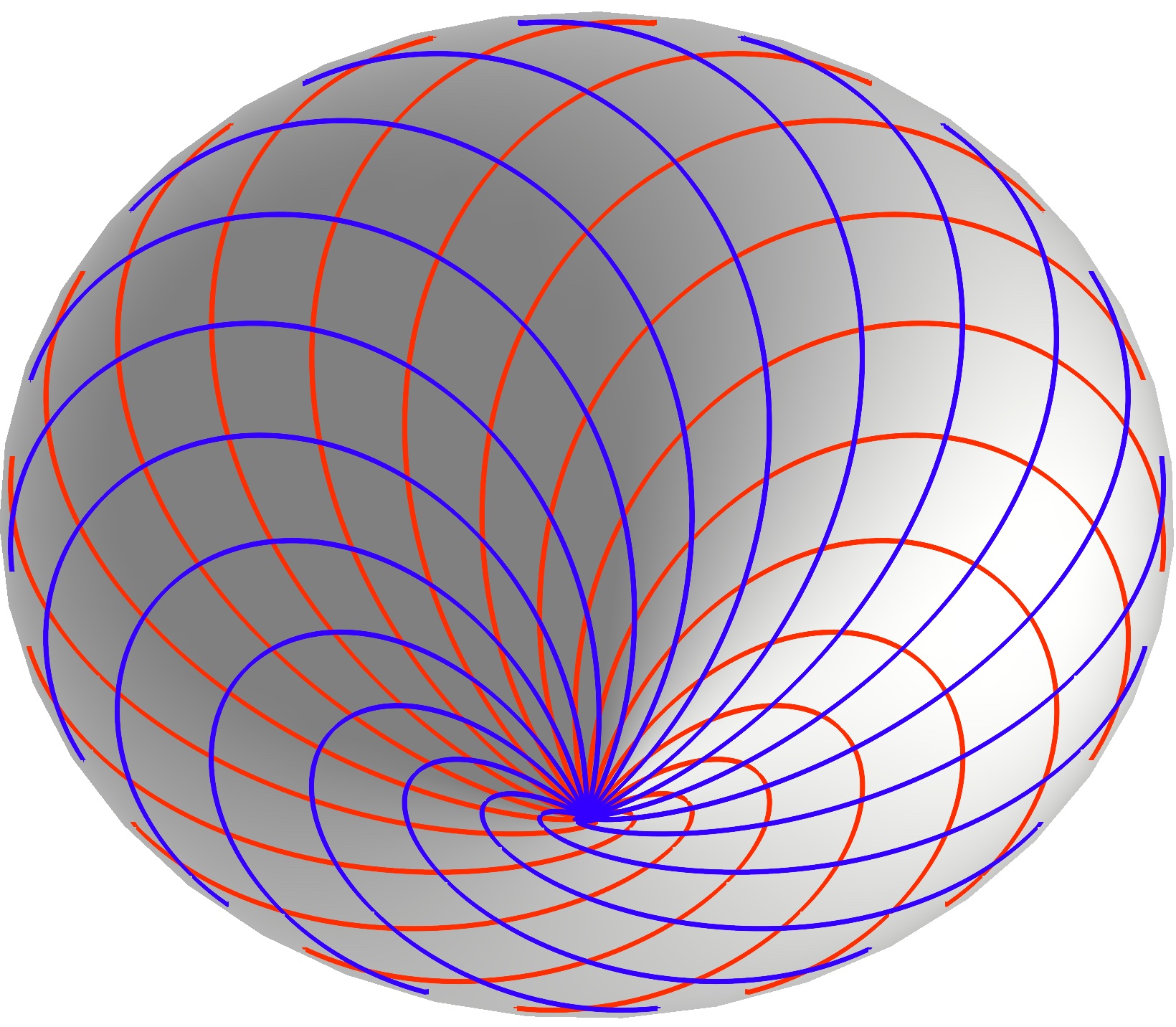}
\caption{The asymptotic Chebyshev net on $\ve r^-$.}
\label{fig:pshatnet}
\end{figure}

To perform Step 4 and find the corresponding nets on the middle surface,
we proceed differently from the previous example.
In order to be able to write formulas,
although only in principle and not fully explicit,
we express $x^+$, $y^-$ in terms of $x^-$, $y^+$.
Eliminating $u^\pm$, $v^\pm$ from
\[
x^\pm + y^\pm = u^+ = u^-, \qquad
x^\pm - y^\pm = v^\pm
\]
and equation~\eqref{eq:ex2:paral:impl},
we get
\begin{gather}
\label{eq:ex2:para:xy}
\begin{split}
&x^+ + y^+ = x^- + y^-, \\
&\sin k \cn\bigl((x^- - y^-) \cos k \mid -{\tan^2 k}\bigr)
 + \tanh (x^- + y^+) = 0.
\end{split}
\end{gather}
Denoting
$w = x^- - y^+$, $v = v^+ = x^+ - y^+$,
we substitute
\[
x^+ = v + y^+, \qquad
x^- = w + y^+.
\]
into equations~\eqref{eq:ex2:para:xy} to get
\[
y^- = y^+ + v - w, \qquad
\sin k \cn\bigl((2 w - v) \cos k \mid -{\tan^2 k}\bigr) + \tanh v = 0.
\]
From the latter equation, we can express $w$ as a function of
$v$, namely
\[
w = \Psi_k(v) = \frac{v}{2} + \frac{1}{2 \cos k}
 \operatorname{arccn}\left(-\frac{\tanh v}{\sin k}
       \,\biggl|\, -{\tan^2 k}\right) .
\]
This opens the way to express $v$ as
$\Psi_k^{-1}(w)$ and compute it at least numerically.
For the graphs, see Figure~\ref{fig:Psi}.
\begin{figure}[th]\centering
\includegraphics[scale = 0.37, angle = 0]{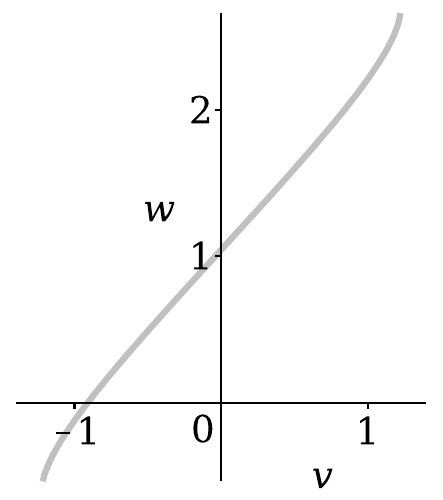}
\qquad
\includegraphics[scale = 0.44, angle = 0]{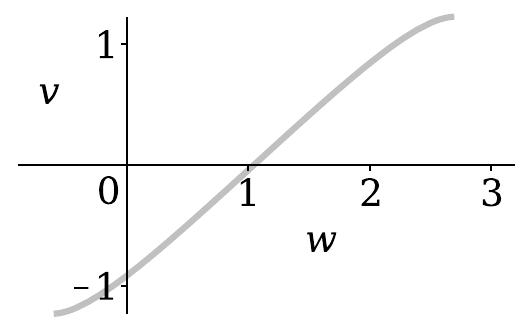}
\caption{The graphs of $w = \Psi_k(v)$ and $v = \Psi_k^{-1}(w)$ for $k = 1$.}\label{fig:Psi}
\end{figure}

The derivatives are
\[
\frac{\dif\Psi_k(v)}{\dif v} = \frac
   {1 + \sqrt{1 - \cos^2 k \cosh^2 v}}
   {2  \sqrt{1 - \cos^2 k \cosh^2 v}},
\qquad
\frac{\dif\Psi_k^{-1}(w)}{\dif w} = \frac
   {2  \sqrt{1 - \cos^2 k \cosh^2 \smash{\Psi_k^{-1}(w)}}}
   {1 + \sqrt{1 - \cos^2 k \cosh^2 \smash{\Psi_k^{-1}(w)}}}.
\]

Summarising, the resulting expressions for $x^+$, $y^-$ in terms of $x^-$, $y^+$ are{\samepage
\[
x^+ = y^+ + \Psi_k^{-1}(x^- - y^+), \qquad
y^- = 2 y^+ - x^- + \Psi_k^{-1}(x^- - y^+).
\]
These allow us to obtain parallel parameterisations
$\ve r^+(x^-,y^+)$ and $\ve r^-(x^-,y^+)$.}

By symmetry, we can also write $x^-$ and $y^+$ in terms of $x^+$ and $y^-$
and obtain parallel parameterisations
$\ve r^+(x^+,y^-)$ and $\ve r^-(x^+,y^-)$.

Step 5.
The resulting concordant Chebyshev nets are
\begin{gather*}
\ve r(x^-,y^+) = \tfrac12 \ve r^+(x^-,y^+) + \tfrac12 \ve r^-(x^-,y^+),
\\
\ve r(x^+,y^-) = \tfrac12 \ve r^+(x^+,y^-) + \tfrac12 \ve r^-(x^+,y^-).
\end{gather*}
For the plots see Figure~\ref{fig:twocoaxial}.
Again, the two nets are different, but identifiable by the mirror symmetry.
\begin{figure}[th]\centering
\includegraphics[scale = 0.35]{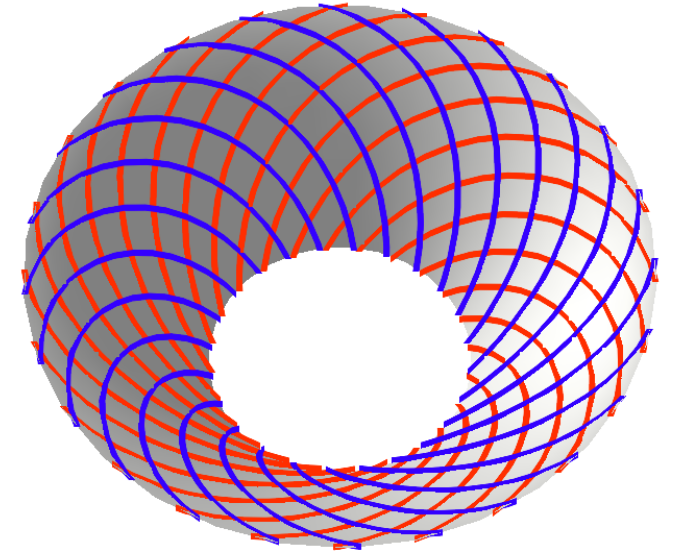}
\qquad
\includegraphics[scale = 0.35]{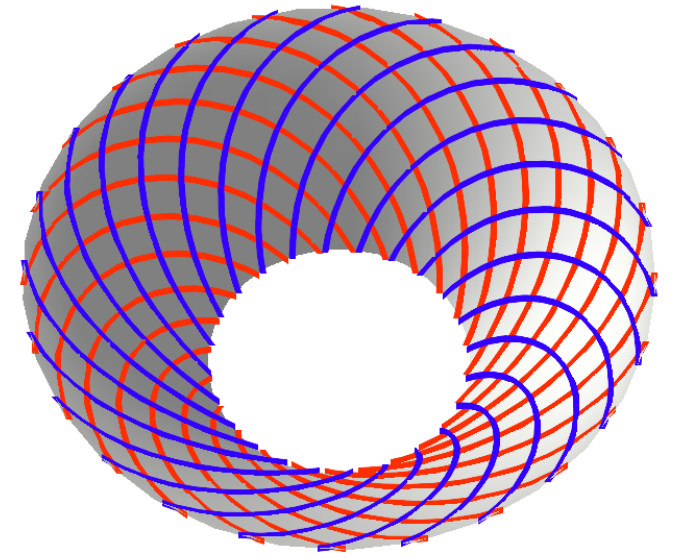}
\caption{The two concordant Chebyshev nets on the middle surface.}
\label{fig:twocoaxial}
\end{figure}
\end{Example}

\appendix

\section{Appendix on relations among the second-order invariants}
\label{app:rel}

As can be inferred from the exposition in Section~\ref{sect:nets},
the geometry of nets in Euclidean space is characterised by the
invariance with respect to rigid motions combined with the
reparameterisations~\eqref{eq:net:transf}.

Consider an isoparametric net $\ve r(x_1,x_2)$.
An $r$th-order scalar differential invariant, $r \ge 1$, of the net is a
scalar expression constructed from the derivatives of
$\ve r$ of order $\le r$ invariant with respect to rigid motions and
transformations~\eqref{eq:net:transf},
i.e., with respect to the $r$-jet prolongation~\cite{A-L-V}
of the vector field
\[
F_i(x_i) \pd{}{x_i} + (\ve Q \cdot \ve r + \ve P) \cdot \pd{}{\ve r},
\]
where $\ve Q$ and $\ve P$ stand for arbitrary rotation and translation
matrices, respectively, while $F_i(x_i)$ are arbitrary functions.
Computing routinely the number $M^{\rm net}_r$ of functionally independent scalar
differential invariants of order $r$,
we obtain the increments $N^{\rm net}_r = M^{\rm net}_r - M^{\rm net}_{r - 1}$
given in Table~\ref{tab:inv}
(so that $M^{\rm net}_r$ is $N^{\rm net}_1 + \cdots +N^{\rm net}_r$).
For comparison, we also give the analogous increments $N^{\rm surf}_r$
for invariants of surfaces.
\begin{table}[ht]\centering
\def\hn{\phantom{0}}\renewcommand{\arraystretch}{1.2}
\begin{tabular}{r|rrrrrrrcr}
$\text{order } r$ & 0 & \hn 1 & \hn 2 & \hn 3 &  4 &  5 & \ldots & $\hn r$ & \ldots
\\\hline
 $N^{\rm net}_r$ & 0 & 1 & 7 & 10 & 13 & 16 & \ldots & $3 r + 1$ & \ldots
\\
 $N^{\rm surf}_r$ & 0 & 0 & 2 & 4 & 5 & 6 & \ldots & $\hn r + 1$ & \ldots
\end{tabular}
\caption{Growth table of the number of invariants of order $r$.}
\label{tab:inv}
\end{table}

As we can see, for surfaces there are just two independent invariants of the second order
that can be used to specify a geometric class of surfaces.
In contrast, as much as eight independent second-order invariants may be
involved in the specification of a geometric class of nets.

The following simple proposition yields another upper bound on the number of
independent invariants.

\begin{Proposition}
%\label{prop:four}
There exist no more than four functionally independent scalar invariants
expressible in terms of $\I ij$, $\II ij$.
\end{Proposition}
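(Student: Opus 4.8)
The plan is to recognise the invariants in question as the invariants of a two-parameter linear group action and to settle the count by a dimension argument. Regard the six fundamental coefficients as coordinates on the open set
\[
\Omega = \bigl\{ (\I11, \I12, \I22, \II11, \II12, \II22) \in \mathbb R^6 : \I11 > 0,\ \I11\I22 - \I12^2 > 0 \bigr\},
\]
cut out only by the metric-positivity conditions, no further relation constraining the six quantities at a point. By the transformation laws $\I ij' = f_i f_j \I ij$ and $\II ij' = f_i f_j \II ij$ recorded above, the reparameterisations~\eqref{eq:dp:transf} act on $\Omega$ through the two-parameter group $G$ of scalings $X_i \mapsto f_i X_i$, and a scalar expression in the fundamental coefficients is an invariant precisely when it is constant along the $G$-orbits. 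It therefore suffices to bound the codimension of these orbits in $\Omega$.

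Writing $f_i = e^{t_i}$, the infinitesimal generators of the $G$-action are
\begin{align*}
V_1 &= 2\I11\,\partial_{\I11} + \I12\,\partial_{\I12}
 + 2\II11\,\partial_{\II11} + \II12\,\partial_{\II12}, \\
V_2 &= \I12\,\partial_{\I12} + 2\I22\,\partial_{\I22}
 + \II12\,\partial_{\II12} + 2\II22\,\partial_{\II22}.
\end{align*}
On $\Omega$ one has $\I11 > 0$ and, since $\I11\I22 - \I12^2 > 0$, also $\I22 > 0$; comparing the $\partial_{\I11}$ and $\partial_{\I22}$ components shows that $V_1$ and $V_2$ are linearly independent at \emph{every} point of $\Omega$, so every orbit is two-dimensional.

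To conclude, observe that any invariant $\phi$ satisfies $V_1 \phi = V_2 \phi = 0$, so its differential $\dif\phi$ lies in the common annihilator of $V_1$ and $V_2$, a subspace of the cotangent space of dimension $6 - 2 = 4$ at each point of $\Omega$. Hence the differentials of any collection of such invariants span at most a four-dimensional space everywhere on $\Omega$, which is exactly the statement that no more than four of them can be functionally independent. The bound is in fact sharp: the invariants $\omega$, $K$, $H$, $\sigma$ of Section~\ref{sect:inv} are all expressible in $\I ij$, $\II ij$, and a short computation of their differentials in the $\II ij$-directions shows them to be functionally independent. The only step requiring care is the identification of invariance under net reparameterisations with $G$-invariance on $\Omega$, but this is immediate from the semiinvariance weights already established.
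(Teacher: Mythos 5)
Your proof is correct and follows essentially the same route as the paper, which disposes of the statement with the one-line dimension count ``six independent components $\I ij$, $\II ij$ and two independent parameters $f_i$''; you have simply made that count rigorous by exhibiting the infinitesimal generators of the two-parameter scaling group and verifying their pointwise linear independence on the open set where $\mathrm{I}$ is positive definite. The added sharpness remark via $\omega$, $K$, $H$, $\sigma$ is consistent with the paper's subsequent proposition on choosing eight independent invariants of order $\le 2$.
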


\begin{proof}
We have six
independent components $\I ij$, $\II ij$ and two independent parameters $f_i$.
\end{proof}

\begin{Proposition}
In the generic case,
the eight independent invariants of order $\le 2$ predicted in Table~\rmref{tab:inv}
can be chosen to be the union of any two of
$\{\omega,\sigma\}$, $\{K,H\}$, $\{\nc1,\nc2\}$, $\{\gtor1,\gtor2\}$ along with any
two of $\{\gc1,\gc2\}$, $\{\pi_1,\pi_2\}$, $\{\iota_1,\iota_2\}$,
$\bigl\{\widehat{X_1} \omega, \widehat{X_2} \omega\bigr\}$.
\end{Proposition}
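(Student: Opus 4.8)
The plan is to pass to a generic $2$-jet, fix a cross-section to the group action, and reduce the claim to two decoupled nonvanishing-Jacobian statements. First I would use the derivative freedom $F_i'$ in the reparameterisations~\eqref{eq:net:transf} to normalise $\I11=\I22=1$ (so that $\I12=\cos\omega$), and the second-order freedom $F_i''$ to annihilate two metric derivatives, say $\I{11}{,1}=\I{22}{,2}=0$. This is a genuine local cross-section, and it leaves exactly eight slice coordinates---matching $M^{\rm net}_2=8$ of Table~\ref{tab:inv}---namely $\omega$, the three coefficients $\II11,\II12,\II22$, and the four surviving first derivatives $\I{11}{,2},\I{12}{,1},\I{12}{,2},\I{22}{,1}$. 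The eight chosen invariants are functionally independent at the point precisely when their Jacobian in these coordinates is nonsingular.

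The decisive structural remark is that $\sigma,K,H,\nc1,\nc2,\gtor1,\gtor2$ are functions of $\I ij,\II ij$ alone (and $\omega$ of $\I ij$ alone), whereas $\gc1,\gc2,\pi_1,\pi_2,\iota_1,\iota_2,\widehat{X_1}\omega,\widehat{X_2}\omega$ are intrinsic, involving $\I ij$ and its first derivatives but never $\II ij$. Hence the two first-group pairs depend only on $(\omega,\II11,\II12,\II22)$ and the two second-group pairs only on $\omega$ and the four derivatives. The Jacobian therefore becomes triangular in the shared variable $\omega$, and its nonsingularity is equivalent to the conjunction of two $4\times4$ conditions: (a) the two first-group pairs have nonsingular Jacobian with respect to $(\omega,\II11,\II12,\II22)$; and (b) the two second-group pairs have nonsingular Jacobian with respect to the four metric derivatives at fixed $\omega$.

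Condition (a) is routine. By the preceding Proposition there are at most four invariants expressible through $\I ij,\II ij$, so independence of any four reduces to a single determinant; for each of the six pairings one solves elementary equations for $\omega,\II11,\II12,\II22$ on a generic fibre (for instance $\{K,H\}$ return the elementary symmetric functions of $\II11,\II22$ once $\omega,\II12$ are known, while $\{\gtor1,\gtor2\}$ then return $\II11,\II22$ linearly). For condition (b) I would write each second-group pair, at fixed $\omega$, as a linear form in $(\I{11}{,2},\I{12}{,1},\I{12}{,2},\I{22}{,1})$; each pair spans a $2$-plane, and two pairs are independent iff these planes are complementary. A short determinant evaluation shows that, up to sign, the relevant $4\times4$ determinant equals $4$ or $4\sin^2\omega$ in every case that includes $\{\gc1,\gc2\}$ or $\{\widehat{X_1}\omega,\widehat{X_2}\omega\}$, hence is nonzero since $\sin\omega\neq0$.

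The one real obstacle---and the place where the statement must be read with care---is the pairing $\{\iota_1,\iota_2\}$ with $\{\pi_1,\pi_2\}$. In the cross-section one finds that $\iota_1,\iota_2,\pi_1,\pi_2$ all depend on only the two derivatives $\I{11}{,2},\I{22}{,1}$; in fact $\pi_i$ is an invertible $\omega$-dependent linear combination of $\iota_1,\iota_2$, which is exactly the identity~\eqref{eq:iotapi} underlying Proposition~\ref{prop:Chebyshev}\,(iv)$\Leftrightarrow$(v). Thus these two pairs span one and the same $2$-plane, and the corresponding eight invariants are functionally \emph{dependent}. The argument therefore establishes the proposition for every admissible selection except $\{\iota_1,\iota_2\}\cup\{\pi_1,\pi_2\}$; equivalently, ``any two'' of the second group must be understood modulo the identification of $\{\iota_1,\iota_2\}$ with $\{\pi_1,\pi_2\}$ furnished by that identity.
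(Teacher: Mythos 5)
Your method is the one the paper's proof invokes---computation of Jacobi determinants---but you execute it properly: the cross-section $\I11=\I22=1$, $\I{11}{,1}=\I{22}{,2}=0$ is a legitimate local slice with exactly eight coordinates, and the block-triangular splitting into the extrinsic group (functions of $\omega,\II11,\II12,\II22$) and the intrinsic group (functions of $\omega$ and the surviving metric derivatives) correctly reduces the claim to two $4\times4$ determinants. Your condition (a) is fine, with the caveat that genericity is genuinely used (e.g., the pairing $\{\nc1,\nc2\}$, $\{\gtor1,\gtor2\}$ degenerates on the locus $\nc1=\nc2$, so you should record the open dense set on which each determinant is nonzero rather than calling it ``routine'').

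The substantive content of your write-up is the final observation, and it is correct: identity~\eqref{eq:iotapi} exhibits $(\iota_1,\iota_2)$ as the image of $(\pi_1,\pi_2)$ under the matrix with rows $(1,\cos\omega)$ and $(\cos\omega,1)$, which is invertible for $\sin\omega\ne0$, so the two pairs span the same $2$-plane of intrinsic first-order data. One step is missing to turn this into a functional dependence among the \emph{eight chosen} invariants: you must note that $\omega$ is itself recovered from whichever two first-group pairs were selected (this follows from your own condition (a) together with the preceding Proposition bounding the number of independent invariants built from $\I ij$, $\II ij$ by four). With that supplied, the eight functions obtained by adjoining both $\{\pi_1,\pi_2\}$ and $\{\iota_1,\iota_2\}$ have rank at most six, so the Proposition read literally with ``any two'' fails for exactly this one of the six second-group pairings. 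The paper's one-line proof does not address this case, and the statement should be amended to exclude the simultaneous choice of $\{\pi_1,\pi_2\}$ and $\{\iota_1,\iota_2\}$; your argument proves the corrected statement, not the printed one.
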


\begin{proof}
A straightforward proof goes by computation of Jacobi determinants.
\end{proof}

The above results imply the existence of mutual relations.
A number of them can be found in~\cite{WS-I,WS-II,WS-III}, \cite[Chapter~4]{Spi-III},
\cite[Section~93]{VIS}, and later in this section.

Among the known relations we mention the
Beetle identities~\cite[equation~(10)]{Beetle}
\[
\gtor i^2 + \nc i^2 - 2 H \nc i + K = 0
\]
and
\begin{gather*}
{\gtor 1} + {\gtor 2} = (\nc 2 - \nc 1) \cot\omega,
\\
K = {\nc1 \nc2 + \gtor1 \gtor2} + (\nc1 \gtor2 - \nc2 \gtor1) \cot \omega,
\\
2 H = {\nc1 + \nc2} + (\gtor2 - \gtor1) \cot \omega,
\end{gather*}
see~\cite{WS-I,WS-II,WS-III}.
These are polynomial relations homogeneous
with respect to the weight equal to the degree in $\II ij$.
Let us look for similar identities incorporating the Schief curvature.
Invariants rational in $\II ij$ can be routinely
expressed in terms of $\omega$, $\sigma$, $\nc 1$, $\nc 2$
by substituting $\II ii = \nc i \I ii$ and
$\II 12 = \sigma \sqrt{\detI}$, followed by expressing the
first-order coefficients in terms of $\omega$.
In this way, we easily obtain
\begin{gather}\label{eq:gtncsigma}
(-1)^i\,{\gtor i} = \nc i \cot\omega - \sigma,
\end{gather}
as well as the identities
\[
%\numbered\label{eq:nc:1}
\nc1\nc2 = (K + \sigma^2) \sin^2\omega,
\qquad
\nc1 + \nc2 = 2 (H \sin\omega + \sigma \cos\omega) \sin\omega,
\]
from which one can express the curvatures $\nc1$ and $\nc2$ in terms
of $K$, $H$, $\sigma$, $\omega$;
then also~$\gtor1$ and~$\gtor2$ by \eqref{eq:gtncsigma}.
Conversely, if $\cos\omega \ne 0$, then
system~\eqref{eq:gtncsigma} can be solved for $\sigma$
and $\cot\omega$ as
\[
% \numbered\label{eq:sigmacotphi}
\sigma = \frac{\gtor1 \nc2 + \gtor2 \nc1}{\nc2 - \nc1},
\qquad
\cot\omega = \frac{\gtor1 + \gtor2}{\nc2 - \nc1}.
\]

Finally, the invariants $\iota_i$, $\pi_i$, $\gc i$, $\widehat{X_i} \omega$ expressible
in terms of $\I ij$, $\widehat{X_i}$ are related by
\begin{gather}\label{eq:pigc}
\pi_i \sin\omega + \widehat{X_i} \omega  = (-1)^i \gc i
\end{gather}
and
\begin{gather}\label{eq:iotapi}
\pi_1 + \pi_2 \cos\omega = \iota_1, \qquad
\pi_1 \cos\omega + \pi_2 = {-\iota_2}.
\end{gather}
All these formulas can be proved by straightforward computation.

Let us also mention some simple vector invariants.
Recall that $\widehat{X_1} \ve r$, $\widehat{X_2} \ve r$ are the unit tangent vectors along
the curves of the net.
The vectors $\widehat{X_1} \widehat{X_2} \ve r$, $\widehat{X_2} \widehat{X_1} \ve r$ are
two different invariant versions of what is often referred to as the {\it twist} in
computational geometry
(\cite[end of Section~7.1]{F-P-1979} or~\cite{B-F-F-H}).
It is easily checked that
\begin{gather}\label{eq:XYvect}
\begin{split}
&\widehat{X_1} \widehat{X_2} \ve r = (\sigma \sin\omega) \ve n
 + \pi_1 \widehat{X_1} \ve r - (\pi_1 \cos\omega) \widehat{X_2} \ve r,
\\
& \widehat{X_2} \widehat{X_1} \ve r = (\sigma \sin\omega) \ve n
 - \pi_2 \widehat{X_1} \ve r + (\pi_2 \cos\omega) \widehat{X_2} \ve r.
 \end{split}
\end{gather}
Then
\[
\bigl[\widehat{X_1}, \widehat{X_2}\bigr] \ve r = \widehat{X_1} \widehat{X_2} \ve r - \widehat{X_2} \widehat{X_1} \ve r
 = (\pi_1 + \pi_2 \cos\omega) \widehat{X_1} \ve r
 - (\pi_1 \cos\omega + \pi_2) \widehat{X_2} \ve r
\]
proves formula~\eqref{eq:iotapi}.
Furthermore, $\widehat{X_1} \ve n$, $\widehat{X_2} \ve n$ are tangent vectors to the surface that
reflect the change of the normal vector to the surface along the curves of the net.
In matrix notation, we have
\begin{gather*}
\begin{pmatrix} \widehat{X_1} \ve r \\ \widehat{X_2} \ve r \end{pmatrix}
\begin{pmatrix} \widehat{X_1} \ve n & \widehat{X_2} \ve n \end{pmatrix}
 = -\begin{pmatrix} \widehat{X_1} \widehat{X_1} \ve r & \widehat{X_1} \widehat{X_2} \ve r \\
    \widehat{X_2} \widehat{X_1} \ve r & \widehat{X_2} \widehat{X_2} \ve r
    \end{pmatrix} \cdot \ve n
 = -\begin{pmatrix} \nc1 & \sigma \\ \sigma & \nc2\end{pmatrix} \sin\omega,
\end{gather*}
which demonstrates a kinship between $\sigma$ and the normal curvatures.

Finally, $\widehat{X_1} \ve n \cdot \widehat{X_2} \ve n$
equals $K \sin \omega \cot \omegaIII$, where $\omegaIII$
is the intersection angle of the spherical image of the net.
Moreover,
\begin{gather}\label{eq:cotphiIII}
\cot\omegaIII
 = \frac{2 H \sigma}{K} - \cot\omega.
\end{gather}

To conclude this section, we review five discrete symmetries of nets
described in Tables~\ref{tab:Ts} and~\ref{tab:Ts:XnFF}.
Their action on the invariants is summarised in Table~\ref{tab:Ts:inv}.

\begin{table}[th]\centering\renewcommand{\arraystretch}{1.2}
\begin{tabular}{r|l}
$T_{-1}$ & Reversion of the protractor, $\omega \longleftrightarrow -\omega$ \\
$T_0$ & Change of sign of all vector and triple products
   (the orientation of Euclidean space) \\
$T_1$ & Change of orientation of curves of the first family \\
$T_2$ & Change of orientation of curves of the second family \\
$T_3$ & Family swap
\end{tabular}
\caption{Five discrete symmetries of nets in Euclidean space.}
\label{tab:Ts}
\end{table}

\begin{table}[th]\centering\renewcommand{\arraystretch}{1.2}
%$$\def\ {\hphantom{-}}
\begin{tabular}{r|rrrrrrrrrr}
     &  $X_1$ &  $X_2$ & $\ve n$ &$\hphantom{-}\I11$ &  $\I12$ &$\hphantom{-}\I22$ &  $\II11$ &  $\II12$ &  $\II22$ \\
     \hline
$T_\1$ &  $X_1$ &  $X_2$ &  $\ve n$ &  $\I11$ & $-\I12$ &  $\I22$ &  $\II11$ &  $\II12$ &  $\II22$ \\
 $T_0$ &  $X_1$ &  $X_2$ & $-\ve n$ &  $\I11$ &  $\I12$ &  $\I22$ & $-\II11$ & $-\II12$ & $-\II22$ \\
 $T_1$ & $-X_1$ &  $X_2$ &  $\ve n$ &  $\I11$ & $-\I12$ &  $\I22$ &  $\II11$ & $-\II12$ &  $\II22$ \\
 $T_2$ &  $X_1$ & $-X_2$ &  $\ve n$ &  $\I11$ & $-\I12$ &  $\I22$ &  $\II11$ & $-\II12$ &  $\II22$ \\
 $T_3$ &  $X_2$ &  $X_1$ &  $\ve n$ &  $\I22$ &  $\I12$ &  $\I11$ &  $\II22$ &  $\II12$ &  $\II11$
\end{tabular}

\caption{The action of discrete symmetries on $X_i$, $\ve n$, $\I ij$ and $\II ij$.}\label{tab:Ts:XnFF}
\end{table}

\begin{table}[!ht]\centering\renewcommand{\arraystretch}{1.2}\setlength{\tabcolsep}{3.5pt}
%\arraycolsepdim=0.77ex
%\def\ {\hphantom{-}}

\begin{tabular}{r|rrrrrrrrrrrrrrr}
     &  $\omega$    &  $H$ &  $\sigma$ &  $\nc2$ &  $\nc1$ &  $\gc1$ &  $\gc2$ &  $\gtor1$ &  $\gtor2$ &  $\pi_1$ &  $\pi_2$ &  $\iota_1$ &  $\iota_2$  & $\omega_{,1}$ &  $\omega_{,2}$  \\
\hline
 $T_\1$& $-\omega$    &  $H$ & $-\sigma$ &  $\nc1$ &  $\nc2$ & $-\gc1$ & $-\gc2$ & $-\gtor1$ & $-\gtor2$ &  $\pi_1$ &   $\pi_2$ & $\iota_1$ &  $\iota_2$ & $-\omega_{,1}$ &  $\omega_{,2}$ \\
 $T_0$ &  $\omega$    & $-H$ & $-\sigma$ & $-\nc1$ & $-\nc2$ &  $\gc1$ &  $\gc2$ & $-\gtor1$ & $-\gtor2$ &  $\pi_1$ &   $\pi_2$ &  $\iota_1$ &  $\iota_2$   & $\omega_{,1}$ &  $\omega_{,2}$ \\
 $T_1$ & $\pi - \omega$ & $ H$ & $-\sigma$ &  $\nc1$ &  $\nc2$ &  $\gc1$ & $-\gc2$ & $-\gtor1$ & $-\gtor2$ &  $\pi_1$ & $-\pi_2$ & $\iota_1$ &  $-\iota_2$  & $\omega_{,1}$ & $-\omega_{,2}$  \\
 $T_2$ & $\pi - \omega$ &  $H$ & $-\sigma$ &  $\nc1$ &  $\nc2$ & $-\gc1$ &  $\gc2$ & $-\gtor1$ & $-\gtor2$ & $-\pi_1$ &  $\pi_2$ &  $-\iota_1$ &  $\iota_2$ & $-\omega_{,1}$ &  $\omega_{,2}$ \\
 $T_3$ &  $\omega$    &  $H$ &  $\sigma$ &  $\nc2$ &  $\nc1$ & $-\gc2$ & $-\gc1$ & $-\gtor2$ & $-\gtor1$ &  $\pi_2$ &  $\pi_1$ &   $-\iota_2$ &  $-\iota_1$   & $\omega_{,2}$ &  $\omega_{,1}$
\end{tabular}
\caption{The action of discrete symmetries on the invariants.}
\label{tab:Ts:inv}
\end{table}

The action on $\omegaIII$ is the same as on $\omega$.
Needless to say, all the identities among invariants we have listed in this section
are invariant under transformations $T_{-1}, \dots, T_3$.

\section*{Conclusions and perspectives}

After reviewing nets and their second-order invariants,
we  introduced integrable classes of nets in analogy with
integrable classes of surfaces.
Then, starting from an earlier result~\cite{K-M}, we established equivalence of
concordant Chebyshev nets and pairs of pseudospherical surfaces.
The integrability of concordant Chebyshev nets, which we first observed
in \cite{K-M}, is hereby explicitly related to the integrability of pseudospherical
surfaces.
Presented examples are the concordant Chebyshev nets on the middle surface of
two pseudospheres and on the middle surface of the pseudosphere and another coaxial
axisymmetric pseudospherical surface.

In the outlook, we identify the following tasks:
\begin{itemize}\itemsep=0pt
\item[--] Explore the full ``parameter space'' in~Figure~\ref{fig:world}.
\item[--] Explore the other integrable Chebyshev nets from paper~\cite{K-M}.
\item[--] Employ the ZCRs found in paper~\cite{K-M} to obtain recursion operators,
solutions, etc.
\item[--] Use the methods of papers~\cite{B-M-2010,K-M} to search for new integrable
classes of nets.
\item[--] Explore higher-dimensional analogues.
\end{itemize}

\subsection*{Acknowledgements}

This research received support from M\v{S}MT under RVO 47813059.
The author is grateful to Evgeny Ferapontov and Jan Cie\'sli\'nski for introduction
into integrable surfaces and thought-provoking discussions that inspired this
particular research.

%\bibliographystyle{sigma}
%\bibliography{bilbio_init}

\pdfbookmark[1]{References}{ref}
\LastPageEnding

\end{document}